\def\Khr{K\"{a}hlerian~}
\def\rGamma{\mathrm{\Gamma}}
\def\rH{\mathrm{H}}
\def\Im {\mathrm{Im}}
\def\NS{\mathrm{NS}}
\colorlet{linkequation}{blue}
\newcommand*{\SavedEqref}{}
\let\SavedEqref\eqref
\renewcommand*{\eqref}[1]{%
  \begingroup
    \hypersetup{
      linkcolor=blue,
      linkbordercolor=blue,
    }%
    \SavedEqref{#1}%
  \endgroup
}
\DeclareSymbolFont{extraup}{U}{zavm}{m}{n}
\DeclareMathSymbol{\varheart}{\mathalpha}{extraup}{86}
\DeclareMathSymbol{\vardiamond}{\mathalpha}{extraup}{87}
\def\CT@@do@color{%
  \global\let\CT@do@color\relax
        \@tempdima\wd\z@
        \advance\@tempdima\@tempdimb
        \advance\@tempdima\@tempdimc
        \kern-\@tempdimb
\transparent{0.6}%
        \leaders\vrule
                \hskip\@tempdima\@plus  1fill
        \kern-\@tempdimc
        \hskip-\wd\z@ \@plus -1fill }
\newcommand{\thickhline}{%
    \noalign {\ifnum 0=`}\fi \hrule height 1pt
    \futurelet \reserved@a \@xhline
}
\newcolumntype{"}{@{\hskip\tabcolsep\vrule width 1pt\hskip\tabcolsep}}
\newtheorem{Theorem}{Theorem}[section]
\newtheorem{Lemma}[Theorem]{Lemma}
\newtheorem{Proposition}[Theorem]{Proposition}
\newtheorem{Corollary}[Theorem]{Corollary}
\newtheorem{Definition}[Theorem]{Definition}
\newcommand{\Hom}{{\rm Hom}}
\newcommand{\Spec}{\mathrm{Spec}}
\newcommand{\im}{\mathrm{im}}
\newcommand{\Coh}{\mathrm{Coh}}
\newcommand{\Proj}{\mathrm{Proj}}
\newcommand{\Ext}{\mathrm{Ext}}
\newcommand{\Jac}{\mathrm{Jac}}
\newcommand{\bp}{\begin{Proposition}}
\newcommand{\ep}{\end{Proposition}}
\newcommand{\bl}{\begin{Lemma}}
\newcommand{\el}{\end{Lemma}}
\newcommand{\bt}{\begin{Theorem}}
\newcommand{\et}{\end{Theorem}}
\newcommand{\bd}{\begin{Definition}}
\newcommand{\ed}{\end{Definition}}
\newcommand{\End}{\mathrm{End}}
\newcommand{\Mod}{\mathrm{Mod}}
\newcommand{\Mat}{\mathrm{Mat}}
\newcommand{\ev}{\mathrm{ev}}
\newcommand{\eqdef}{\stackrel{{\rm def.}}{=}}
\newcommand{\cinf}{{C^\infty(X)}}
\let\SSS\S
\DeclareFontFamily{U}{rsf}{}
\DeclareFontShape{U}{rsf}{m}{n}{<5> <6> rsfs5 <7> <8> <9> rsfs7 <10-> rsfs10}{}
\DeclareMathAlphabet\Scr{U}{rsf}{m}{n}
\def\cW{\mathcal{W}}
\def\cE{\check{E}}
\def\bE{\mathbf{E}}
\def\Z{\mathbb{Z}}
\def\C{\mathbb{C}}
\def\R{\mathbb{R}}
\def\H{\mathbb{H}}
\def\S{\mathbb{S}}
\def\rk{{\rm rk}}
\def\deg{{\rm deg}}
\def\dd{\mathrm{d}}
\def\fd{\mathfrak{d}}
\def\md{\boldsymbol{\fd}}
\def\cK{{\cal K}}
\def\codim{\mathrm{codim}}
\def\F{\mathrm{F}}
\def\i{\mathbf{i}}
\def\cC{\mathcal{C}}
\def\MF{\mathrm{MF}}
\def\HMF{\mathrm{HMF}}
\def\bK{\mathbf{K}}
\def\Prin{\mathrm{Prin}}
\def\rU{\mathrm{U}}
\newcommand{\be}{\begin{equation*}}
\newcommand{\ee}{\end{equation*}}
\newcommand{\ben}{\begin{equation}}
\newcommand{\een}{\end{equation}}
\newcommand{\beqa}{\begin{eqnarray*}}
\newcommand{\eeqa}{\end{eqnarray*}}
\newcommand{\beqan}{\begin{eqnarray}}
\newcommand{\eeqan}{\end{eqnarray}}
\newcommand \pd {{\partial}}
\newcommand \bpd {{\overline{\partial}}}
\newcommand{\nn}{\nonumber}
\newcommand{\id}{\mathrm{id}}
\def\bvarphi{\boldsymbol{\varphi}}
\def\cL{\mathrm{Cl}}
\def\cK{\mathrm{\cal K}}
\def\cQ{\mathrm{\cal Q}}
\def\odd{\mathrm{odd}}
\def\O{\mathrm{O}}
\def\cA{\mathcal{A}}
\def\cE{\mathcal{E}}
\def\cP{\mathcal{P}}
\def\bP{\mathbf{P}}
\def\cT{\mathcal{T}}
\def\cF{\mathcal{F}}
\def\G_2{\mathrm{G_2}}
\def\cO{\mathcal{O}}
\def\cL{\mathcal{L}}
\def\P{\mathbb{P}}
\def\VB{\mathrm{VB}}
\def\HF{\mathrm{HF}}
\def\PV{\mathrm{PV}}
\def\HPV{\mathrm{HPV}}
\def\ioda{\boldsymbol{\iota}}
\def\bbpd{\boldsymbol{\bpd}}
\def\0{{\hat{0}}}
\def\1{{\hat{1}}}
\def\cJ{\mathcal{J}}
\def\Jac{\mathrm{Jac}}
\def\PF{\mathrm{PF}}
\def\HPF{\mathrm{HPF}}
\def\O{\mathrm{O}}
\def\DF{\mathrm{DF}}
\def\HDF{\mathrm{HDF}}
\def\mGamma{\boldsymbol{\rGamma}}
\def\J{\mathrm{J}}
\def\mod{\mathrm{mod}}
\def\proj{\mathrm{proj}}
\def\free{\mathrm{free}}
\def\triv{\mathrm{triv}}
\def\alg{\mathrm{alg}}
\def\an{\mathrm{an}}
\def\Pic{\mathrm{Pic}}
\def\rR{\mathrm{R}}
\def\rS{\mathrm{S}}
\begin{document}

\title{On B-type open-closed Landau-Ginzburg theories defined on Calabi-Yau Stein manifolds}

\author{Elena Mirela Babalic \and  Dmitry Doryn \and Calin Iuliu Lazaroiu \and Mehdi
  Tavakol}

\institute{Center for Geometry and Physics, Institute for Basic
  Science, Pohang  37673, Republic of Korea\\
\email{mirela@ibs.re.kr, dmitry@ibs.re.kr, calin@ibs.re.kr, mehdi@ibs.re.kr}}

\date{}

\maketitle

\abstract{We consider the bulk algebra and topological D-brane
category arising from the differential model of the open-closed B-type
topological Landau-Ginzburg theory defined by a pair $(X,W)$, where
$X$ is a non-compact Calabi-Yau manifold and $W$ is a complex-valued
holomorphic function. When $X$ is a Stein manifold (but not restricted
to be a domain of holomorphy) we extract equivalent descriptions of
the bulk algebra and of the category of topological D-branes which are
constructed using only the analytic space associated to $X$. In
particular, we show that the D-brane category is described by
projective factorizations defined over the ring of holomorphic
functions of $X$. We also discuss simplifications of the analytic
models which arise when $X$ is holomorphically parallelizable and
illustrate these in a few classes of examples.}

\tableofcontents 

\section{Introduction}
\label{sec:intro}

Quantum oriented open-closed B-type Landau-Ginzburg theories are
non-anomalous quantum field theories conjecturally associated to pairs
$(X,W)$, where $X$ is a non-compact Calabi-Yau manifold  (non-compact 
\Khr manifold with trivial canonical line bundle) and
$W:X\rightarrow \C$ is a non-constant holomorphic function.  Such
theories should be obtained by quantization of the classical models
constructed in \cite{LG1,LG2}. A
successful quantization must in particular associate to each pair
$(X,W)$ an oriented two-dimensional open-closed TFT (topological field
theory) in the sense axiomatized in \cite{tft,MS,LP}, which in turn is
equivalent with an algebraic structure known as a ``TFT datum''. When
the critical set of $W$ is compact, non-rigorous path integral
arguments imply \cite{LG2} that the TFT datum of such theories can be
recovered from the cohomology of any member of a family of
differential models built using a certain dg (differential graded)
algebra $(\PV(X),\updelta_W)$ and a certain dg-category $\DF(X,W)$ of
bundle-valued differential forms defined on $X$ (see \cite{nlg1,tserre} for a
rigorous construction of such models).

As already pointed out in \cite{LG2}, the differential models of the
TFT datum associated to $(X,W)$ are quasi-isomorphic to each other and
their cohomology admits an equivalent analytic description which is
controlled by certain spectral sequences whose limits depend markedly
on the geometry of $X$ and on the nature of the critical locus of
$W$. For example, when $X=\C^n$ and $W$ has finite critical set, it
was argued in \cite{LG2} that such spectral sequences provide an
equivalent analytic model of the TFT datum which is realized in terms
of residues \cite{Vafa,KL1}, a model which was studied rigorously in
\cite{D,DM,PV}. When $X$ is a domain of holomorphy in $\C^n$, some
aspects of the differential and analytic models were studied in
\cite{LLS,S}.

In this paper, we consider the problem of constructing equivalent
analytic models for the TFT datum in the more general setting when $X$
is an arbitrary Calabi-Yau Stein manifold\footnote{Notice that any
Stein manifold is K\"ahlerian, since it admits a holomorphic embedding
in some complex affine space $\C^N$, whose K\"ahler form restricts to
$X$.} (not restricted to be a domain of holomorphy). We first show
that, without any restrictions on $X$, the cohomological algebra
$\HPV(X,W)\eqdef \rH(\PV(X),\updelta_W)$ admits an isomorphic analytic
model constructed as the hypercohomology of the Koszul complex of the
holomorphic 1-form $-\i \pd W$. When $X$ is Stein and $W$ has isolated
critical points, the latter reduces to the global Jacobi algebra of
$W$, whose description simplifies further when $X$ is holomorphically
parallelizable. When $X$ is Stein, we also show that the cohomological
category $\HDF(X,W)\eqdef \rH(\DF(X,W))$ is equivalent with a simpler
category $\HF(X,W)\eqdef\rH(\F(X,W))$, where $\F(X,W)$ is another
dg-category introduced in \cite{nlg1}, which is defined using only
analytic data. Through the Serre-Swan correspondence for Stein
manifolds \cite{ForsterStein,Morye}, the category $\HF(X,W)$ is itself
equivalent with a category of {\em analytic projective factorizations} $\HPF(X,W)$. A
projective factorization is a pair $(P,D)$, where $P$ is a
finitely-generated $\Z_2$-graded module over the algebra $\O(X)$ of
complex-valued holomorphic functions defined on $X$ and $D$ is an odd
endomorphism of $P$ which squares to $W\id_P$. In the Stein case, we
also obtain analytic models of the cohomological disk algebra
$\HPV(X,a)\eqdef\rH(\PV(X,a),\Delta_a)$ of a holomorphic factorization
$a$ of $W$, which is defined \cite{nlg1} as the cohomology of a
certain dg-algebra $(\PV(X,a),\Delta_a)$. Finally, we illustrate these
analytic models in a few classes of examples.

The paper is organized as follows. Section \ref{sec:LGpairs} describes
the basic analytic objects associated to a pair $(X,W)$. Section
\ref{sec:diffmodel} recalls the definition \cite{nlg1} of the
dg-algebra $(\PV(X),\updelta_W)$ and of the dg-categories $\DF(X,W)$
and $\F(X,W)$, as well as of the differential graded disk algebra
$(\PV(X,a),\Delta_a)$. Sections \ref{sec:bulk}, \ref{sec:boundary} and
\ref{sec:disk} discuss the equivalent analytic models of $\HPV(X,W)$,
$\HDF(X,W)$ and $\HPV(X,a)$. Section \ref{sec:examples} illustrates
these analytic models in a few classes of examples. Section
\ref{sec:conclusions} concludes and discusses some further directions,
placing the current work in the wider physics and mathematics context
of Mirror Symmetry. Appendix \ref{app:Stein} summarizes some relevant
properties of Stein manifolds. The reader who is unfamiliar with Stein
geometry (and in particular with Cartan's theorem B, which plays a
crucial role in the proof of some results herein) is encouraged to
refer to the appendix.

\subsection{Notations and conventions}
\label{subsec:notation}

We use the notations and conventions of \cite[Subsection 1.1]{nlg1}.
Given a commutative ring $R$, let $\Mod_R$ denote the category of
$R$-modules and $\mod_R$ denote the full sub-category of
finitely-generated $R$-modules. Let $\Proj_R$ denote the full
subcategory of $\Mod_R$ consisting of projective $R$-modules and
$\proj_R$ denote the full subcategory of $\mod_R$ consisting of
finitely-generated projective $R$-modules. All manifolds considered
are smooth, paracompact, {\em connected} and of non-zero dimension and
all vector bundles considered are smooth.

\section{Landau-Ginzburg pairs}
\label{sec:LGpairs}

\begin{Definition}
A {\em Landau-Ginzburg (LG) pair} of dimension $d$ is a pair $(X,W)$,
where:
\begin{enumerate}[A.]
\itemsep 0.0em
\item $X$ is a non-compact K\"{a}hlerian manifold of complex dimension
  $d$ which is {\em Calabi-Yau} in the sense that the canonical line
  bundle $K_X\eqdef \wedge^d T^\ast X$ is holomorphically trivial.
\item $W:X\rightarrow \C$ is a {\em non-constant} complex-valued
  holomorphic function defined on $X$.
\end{enumerate}
The {\em signature} $\mu(X,W)$ is the mod 2 reduction of $d$:
\be
\mu(X,W)\eqdef {\hat d}\in \Z_2~~.
\ee
\end{Definition}

\noindent Let $(X,W)$ be a Landau-Ginzburg pair. Let $\cO_X$ denote
the sheaf of locally-defined complex-valued holomorphic functions and
$\O(X)=\rGamma(X,\cO_X)=\rH^0(\cO_X)$ denote the ring
of globally-defined holomorphic functions from $X$ to $\C$. Let:
\ben
\label{iodaW}
\ioda_W\eqdef -\i (\pd W)\lrcorner: TX\rightarrow \cO_X
\een
 denote the
morphism of sheaves of $\cO_X$-modules given by left contraction with
$-\i \pd W$, where we identify the holomorphic tangent bundle $TX$
with its locally-free sheaf of holomorphic sections.

\

\begin{Definition} 
The {\em critical set of $W$} is defined through: 
\be
Z_W\eqdef \{p\in X|(\pd W)(p)=0\}~~.
\ee
The {\em critical sheaf} of $W$ is the ideal sheaf:
\be
\cJ_W\eqdef \im(\ioda_W:TX\rightarrow \cO_X)\subset \cO_X~~.
\ee
The {\em critical ideal} of $(X,W)$ is the following ideal of the
commutative ring $\O(X)$:
\be
\J(X,W)\eqdef \cJ_W(X)=\ioda_W(\rGamma(X,TX))\subset \O(X)~~.
\ee
\end{Definition}

\

\noindent Notice that $\i\partial W\in \rGamma(X,T^\ast X)$ is a
holomorphic section of the holomorphic cotangent bundle $T^\ast X$ and
that $Z_W$ is the vanishing locus of this section. For any open subset
$U\subset X$ supporting local complex coordinates
$z^1,\ldots,z^d$, the ideal $\cJ_W(U)\subset \cO_X(U)$ is generated by
the partial derivatives $\frac{\partial W}{\partial z^1},\ldots,
\frac{\partial W}{\partial z^d}\in \cO_X(U)$.

\

\begin{Definition}
\label{def41} The {\em Jacobi sheaf} of $W$ is the sheaf of 
commutative $\cO_X$-algebras defined through:
\be
Jac_W\eqdef \cO_X/\cJ_W~~.
\ee
The \emph{Jacobi algebra} $\Jac(X,W)$ of the LG pair $(X, W)$ is the
commutative $\O(X)$-algebra of globally-defined sections of the Jacobi
sheaf:
\be
\Jac(X,W)\eqdef Jac_W(X)= \rH^0(Jac_W)~~.
\ee
\end{Definition}

\noindent Let $\cO_X\rightarrow Jac_W$ denote the projection map. 
The Jacobi sheaf is supported on the critical set $Z_W$ and the 
restriction $\cO_{Z_W}\eqdef Jac_W|_{Z_W}$ makes $Z_W$ into an analytic
subspace $(Z_W,\cO_{Z_W})$ of the analytic space $(X,\cO_X)$, which we
call the {\em Jacobi space} of $W$.

\

\begin{Definition}
The {\em sheaf Koszul complex} of $W$ is the following complex of locally-free sheaves
of $\cO_X$-modules:
\ben
\label{Koszul}
(\cK_W):~0 \rightarrow \wedge^d TX \stackrel{\ioda_W}{\rightarrow} \wedge^{d-1} TX 
\stackrel{\ioda_W}{\rightarrow} \ldots \stackrel{\ioda_W}{\rightarrow} \cO_X \rightarrow 0~~,
\een
where $\cO_X$ sits in degree zero and we identify the exterior power $\wedge^k T X$ with its
locally-free sheaf of holomorphic sections.
\end{Definition}

\

\noindent With our convention, $\cK_W$ is concentrated in non-positive degrees.

\

\begin{Proposition}
\label{prop:Koszul}
Assume that the critical set of $W$ is finite, i.e. $\dim_\C Z_W=0$. Then the sequence: 
\ben
\label{KoszulRes}
0~ \rightarrow \wedge^d TX \stackrel{\ioda_W}{\rightarrow} \wedge^{d-1} TX \stackrel{\ioda_W}{\rightarrow} \ldots \stackrel{\ioda_W}{\rightarrow} 
TX \stackrel{\ioda_W}{\rightarrow} \cO_X \rightarrow Jac_W\rightarrow 0
\een
is exact and thus provides a resolution of $Jac_W$ through locally
free sheaves of $\cO_X$-modules. In particular, the sheaf Koszul
complex \eqref{Koszul} is exact except at the last term.
\end{Proposition}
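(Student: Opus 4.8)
The plan is to reduce the global statement to a local one and then invoke the classical Koszul-complex criterion for regular sequences. The key point is that exactness of a complex of sheaves can be checked stalkwise, so it suffices to show that for every point $p\in X$ the complex of stalks
\be
0\rightarrow (\wedge^d TX)_p\xrightarrow{\ioda_W} \cdots\xrightarrow{\ioda_W} TX_p\xrightarrow{\ioda_W}\cO_{X,p}\rightarrow (Jac_W)_p\rightarrow 0
\ee
is exact. Pick local holomorphic coordinates $z^1,\dots,z^d$ on an open set $U\ni p$; then $\cO_{X,p}$ is the regular local ring $R\eqdef \C\{z^1-z^1(p),\dots,z^d-z^d(p)\}$ of convergent power series, each $(\wedge^k TX)_p$ is identified with $\wedge^k R^d$ via the frame $\partial/\partial z^1,\dots,\partial/\partial z^d$, and under this identification $\ioda_W$ becomes, up to the nonzero scalar $-\i$, the Koszul differential associated to the sequence of germs $f_i\eqdef \partial W/\partial z^i\in R$. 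Thus \eqref{KoszulRes} is locally the augmented Koszul complex $K_\bullet(f_1,\dots,f_d;R)\to R/(f_1,\dots,f_d)$.

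Next I would invoke the standard homological fact (see e.g. Matsumura, or the Appendix material on Cartan's theorem B combined with depth arguments) that the augmented Koszul complex of a sequence $f_1,\dots,f_d$ in a Noetherian local ring $R$ is a resolution of $R/(f_1,\dots,f_d)$ precisely when $f_1,\dots,f_d$ is a regular sequence, and that this holds whenever $\dim R/(f_1,\dots,f_d)\le \dim R - d = 0$, because in a Cohen--Macaulay local ring (in particular a regular local ring such as $R$) a system of $d=\dim R$ elements cutting out a $0$-dimensional quotient is automatically a system of parameters and hence a regular sequence. So the crux is to verify the dimension bound $\dim_\C \big(R/(f_1,\dots,f_d)\big)=0$ at every $p$. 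If $p\notin Z_W$ this is trivial, since then some $f_i$ is a unit in $R$ and $R/(f_1,\dots,f_d)=0$, making the augmented complex exact (indeed split). If $p\in Z_W$, the stalk $(Jac_W)_p=R/(f_1,\dots,f_d)$ is the local Jacobi ring at an isolated point of the analytic set $Z_W$; since $Z_W$ is finite by hypothesis, the support of $Jac_W$ is $0$-dimensional, so this local ring is an Artinian $\C$-algebra, which is exactly the required bound.

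The main obstacle is making the passage from the \emph{set-theoretic} hypothesis $\dim_\C Z_W=0$ to the \emph{scheme-theoretic} (local-ring) dimension statement $\dim \cO_{X,p}/\cJ_{W,p}=0$ airtight; this is where one uses that $Z_W$ is the support of the coherent analytic sheaf $Jac_W$, so that the analytic Nullstellensatz (or simply the fact that a coherent sheaf supported on a finite set has Artinian stalks) gives $\dim \cO_{X,p}/\cJ_{W,p}=\dim_p Z_W=0$. Once the local exactness is established at every point, exactness of the sheaf sequence \eqref{KoszulRes} follows immediately, and dropping the augmentation term $Jac_W$ yields that \eqref{Koszul} has cohomology only in degree $0$, equal to $Jac_W$; in particular it is exact at every term except the last, as claimed. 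No global (Stein) input is needed for this proposition: the argument is entirely local, and coherence of $TX$, $\wedge^k TX$ and $\cO_X$ is automatic.
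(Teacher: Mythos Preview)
Your proof is correct and follows essentially the same strategy as the paper: both rely on the fact that a section of a rank-$d$ bundle whose vanishing locus has codimension $d$ (equivalently, a regular section) yields an exact Koszul complex. The paper simply invokes this in one line by citing \cite{AN}, whereas you unpack the argument stalkwise via the Cohen--Macaulay property of the regular local ring $\cO_{X,p}$ to conclude that the partials form a regular sequence; your version is more self-contained, the paper's more economical.
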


\begin{proof}
Since $\dim_\C Z_W=0$, we have $\codim_\C Z=d=\dim_\C T^\ast X$, which
means that $-\i\partial W$ is a regular section of $T^\ast X$. Hence
the sheaf sequence:
\be
0~ \rightarrow \wedge^d TX \stackrel{\ioda_W}{\rightarrow} \wedge^{d-1} TX \stackrel{\ioda_W}{\rightarrow} \ldots \stackrel{\ioda_W}{\rightarrow} TX \stackrel{\ioda_W}{\rightarrow} \cJ_W \rightarrow 0
\ee
is exact in the Abelian category $\Coh(X)$ of coherent sheaves of $\cO_X$-modules,
 being a resolution of the
ideal sheaf $\cJ_W$ of $(Z_W,\cO_{Z_W})$ (see \cite[page 5]{AN}). Since $\cJ_W=\ker(\cO_X \rightarrow
Jac_W)$,~this implies the conclusion. ~\qed
\end{proof}

\begin{remark}
\label{rem:SteinFinite}
Assume that $X$ is Stein. Then the critical set $Z_W$ is compact iff
it is a finite set. Indeed, $Z_W$ is a subvariety of $X$ and a Stein
manifold does not admit compact subvarieties of positive dimension (see 
 \cite[Chap V.4, Theorem 3]{GR}).
\end{remark}

\section{Some structures of the differential model}
\label{sec:diffmodel}

\noindent Let $(X,W)$ be a Landau-Ginzburg pair of dimension
$d$. Using path integral arguments, it was argued in \cite{LG2} that,
when the critical set $Z_W$ is compact, the TFT datum \cite{nlg1} of
the B-type open-closed topological Landau-Ginzburg theory defined by
$(X,W)$ admits a family of cochain-level realizations. The
differential models proposed in \cite{LG2} were studied from a
mathematical perspective in \cite{nlg1}, to which we refer the reader
for details. They involve a certain $\O(X)$-linear dg-algebra
$(\PV(X),\updelta_W)$ and a certain dg-category $\DF(X,W)$, as well as
cochain-level realizations of the bulk and boundary traces and
bulk-boundary and boundary-bulk maps of the TFT datum. In this
section, we briefly recall the definition of these structures and of
the differential graded disk algebra of \cite{nlg1}. We also recall the
definition of a dg-category $\F(X,W)$ discussed in loc. cit, which
will arise in later sections. Notice that some of these structures can be
defined without assuming compactness of $Z_W$ (and in this paper 
they are considered in this more general situation), though their physics
interpretation is less clear unless one adds that assumption.

\

\subsection{The differential graded bulk algebra}
\label{subsec:PV}

The $\cinf$-module:
\be
\PV(X)\eqdef \cA(X,\wedge TX) \simeq \cA(X)\otimes_\cinf \rGamma_\infty(X,\wedge T X)~~
\ee
carries a natural multiplication which makes it into a unital and
associative $\cinf$-algebra. For any $i=-d,\ldots, 0$ and $j=0,\ldots,
d$, let:
\be
\PV^{i,j}(X)\eqdef \cA^j(X,\wedge^{|i|}TX)~~
\ee
and set $\PV^{i,j}(X)=0$ for $i\not\in \{-d,\ldots, 0\}$ or $j\not\in
\{0,\ldots, d\}$. Then the decomposition:
\be
\PV(X)\eqdef \bigoplus_{i=-d}^0 \bigoplus_{j=0}^d \PV^{i,j}(X)~~
\ee
makes $\PV(X)$ into a unital associative $\Z\times \Z$-graded
$\cinf$-algebra, whose grading is concentrated in bidegrees $(i,j)$
satisfying $i\in \{-d,\ldots, 0\}$ and $j\in \{0,\ldots, d\}$.  The
{\em canonical $\Z$-grading} of $\PV(X)$ is the total grading of this
bigrading:
\be 
\PV^k(X)\eqdef \bigoplus_{i+j=k} \PV^{i,j}(X)~~(k\in \Z)~~,
\ee 
while the {\em canonical $\Z_2$-grading} is the mod 2
reduction of the former:
\beqa
& & \PV^{\hat 0}(X)\eqdef \bigoplus_{k=\ev} \PV^k(X)~~,\nn\\
& & \PV^{\hat 1}(X)\eqdef \bigoplus_{k=\odd} \PV^k(X)\nn~~.
\eeqa
 We trivially extend $\ioda_W$ to a map from $\PV(X)$ to $\PV(X)$
denoted by the same symbol.  Let $\bbpd:= \bbpd_{\wedge
  TX}:\PV(X)\rightarrow \PV(X)$ be the Dolbeault differential of the
holomorphic vector bundle $\wedge TX$.  Then $(\PV(X),\ioda_W,\bbpd)$
is a bicomplex. By definition, the {\em twisted differential}
$\updelta_W:\PV(X)\rightarrow \PV(X)$ is the total differential of
this bicomplex:
\be
\updelta_W\eqdef \bbpd+\ioda_W~~.
\ee 

\begin{Definition}
The {\em twisted Dolbeault algebra of polyvector-valued forms} 
of the LG pair $(X,W )$ is the
supercommutative $\Z$-graded $\O(X)$-linear dg-algebra
$(\PV(X),\updelta_W)$, where $\PV(X)$ is endowed with the canonical
$\Z$-grading. The {\em cohomological twisted Dolbeault algebra} of $(X,W)$ is the
total cohomology algebra:
\be
\HPV(X,W)\eqdef \rH (\PV(X),\updelta_W)~~.
\ee
\end{Definition}

\subsection{Differential graded categories of holomorphic factorizations}
\label{subsec:DF}

\

\

\begin{Definition}
A {\em holomorphic factorization} of $W$ is a pair $a=(E,D)$, where
$E=E^\0\oplus E^\1$ is a holomorphic vector superbundle on $X$ and $D\in
\rGamma(X,End^\1(E))$ is an odd holomorphic section of $End(E)$ which
satisfies $D^2=W\id_E$.
\end{Definition}

\

\noindent Given two holomorphic factorizations $a_1=(E_1,D_1)$ and
$a_2=(E_2,D_2)$ of $W$, the space $\cA(X,Hom(E_1,E_2))\simeq
 \cA(X)\otimes_\cinf\rGamma_\infty(X,Hom(E_1,E_2))$ is $\Z\times
\Z_2$-graded and carries two differentials, namely the {\em Dolbeault
  differential} $\bbpd_{a_1,a_2}:=\bbpd_{Hom(E_1,E_2)}$ and the {\em
  defect differential} $\md_{a_1,a_2}$. The latter is the
$\cinf$-linear endomorphism of $\cA(X,Hom(E_1,E_2))$ determined
by the condition:
\be
\md_{a_1,a_2}(\rho\otimes f)=(-1)^{\rk \rho} \rho \otimes (D_2\circ f)-(-1)^{\rk \rho +\sigma(f)} \rho\otimes (f\circ D_1)
\ee
for all pure rank forms $\rho \in \cA(X)$ and all pure $\Z_2$-degree
elements $f\in \rGamma_\infty(X,Hom(E_1,E_2))$, where $\sigma(f)$
denotes the $\Z_2$-degree of $f$.  The Dolbeault and defect
differentials square to zero and anti-commute, making
$\cA(X,Hom(E_1,E_2))$ into a $\Z\times\Z_2$-graded bicomplex.  The
{\em twisted differential} $\updelta_{a_1,a_2}$ is defined through:
\be
\updelta_{a_1,a_2}\eqdef \bbpd_{a_1,a_2}+\md_{a_1,a_2}~~.
\ee
The {\em total $\Z_2$-grading} of $\cA(X,Hom(E_1,E_2))$ is given by 
the sum of the $\Z_2$-degree with the mod 2 reduction of the $\Z$-degree. 
Notice that $\updelta_{a_1,a_2}$ is odd with respect to this $\Z_2$-grading. 

\

\begin{Definition}
The {\em twisted Dolbeault category of holomorphic factorizations of $W$}
is the $\Z_2$-graded $\O(X)$-linear dg-category $\DF(X,W)$
defined as follows:
\begin{itemize}
\itemsep 0.0em
\item The objects are the holomorphic factorizations of
  $W$.
\item Given two objects $a_1=(E_1,D_1)$ and
  $a_2=(E_2,D_2)$, the module of morphisms from $a_1$ to $a_2$ is:
\be
\Hom_{\DF(X,W)}(a_1,a_2)\eqdef \cA(X,Hom(E_1,E_2))~~,
\ee
endowed with the total $\Z_2$-grading and with the twisted
differential $\updelta_{a_1,a_2}$.
\item The composition of morphisms is given by the wedge product of
  bundle-valued forms.
\end{itemize}
\end{Definition}

\

\noindent Let:
\be
\HDF(X,W)\eqdef \rH(\DF(X,W))
\ee
denote the total cohomology category of $\DF(X,W)$. We will
show in Section \ref{sec:boundary} that $\HDF(X,W)$ admits a simpler
description when $X$ is a Stein manifold, being equivalent with the
category $\HF(X,W)$ defined below.

\

\begin{Definition}
The {\em holomorphic dg-category of holomorphic factorizations} 
is the $\Z_2$-graded $\O(X)$-linear dg-category $\F(X,W)$ defined as
follows:
\begin{itemize}
\itemsep 0.0em
\item The objects are the holomorphic factorizations of $W$.
\item Given two holomorphic factorizations
  $a_1=(E_1,D_1)$ and $a_2=(E_2,D_2)$ of $W$, the module of morphisms from $a_1$ to $a_2$ is: 
\be
\Hom_{\F(X,W)}(a_1,a_2)\eqdef\rGamma(X,Hom(E_1,E_2))~~,
\ee
endowed with the $\Z_2$-grading, with homogeneous components: 
\be
\Hom_{\F(X,W)}^{\kappa}(a_1,a_2)\eqdef \rGamma(X, Hom^{\kappa}(E_1,E_2))~,~~\forall \kappa\in \Z_2
\ee
and with the differentials $\md_{a_1,a_2}$ determined
uniquely by the condition:
\be
\md_{a_1,a_2}(f)\eqdef D_2\circ f-(-1)^{\kappa} f\circ D_1~,~~\forall f\in \rGamma(X, Hom^{\kappa}(E_1,E_2))~,~~\forall \kappa\in \Z_2~~.
\ee
\item The composition of morphisms is the obvious one. 
\end{itemize}
\end{Definition}

\smallskip

\noindent Let:
\be
\HF(X,W)\eqdef \rH(\F(X,W))~~
\ee
denote the total cohomology category of $\F(X,W)$, viewed as a
$\Z_2$-graded $\O(X)$-linear category.

\smallskip

\begin{remark}
When the critical set $Z_W$ is compact, the path integral arguments of
\cite{LG2} (which motivated the rigorous treatment of \cite{nlg1}) show that the 
cohomological category
$\HDF(X,W)$ can be identified with the category of topological
D-branes of the corresponding open-closed topological field theory. In
this case, the cohomology algebra $\HPV(X,W)$ can be identified
with the bulk algebra of the same theory.
\end{remark}

\subsection{The disk algebra of a holomorphic factorization}
\label{subsec:DiskAlgebra}

Fix a holomorphic factorization $a=(E,D)$ of $W$ and set
$\bbpd_a:=\bbpd_{a,a}= \bbpd_{End(E)}$, $\md_a:=\md_{a,a}= [D,\cdot]$
and $\updelta_a:=\updelta_{a,a}=\bbpd_a+\md_a$. Consider the
$\Z_2$-graded unital associative $\cinf$-algebra:
\be
\PV(X,End(E))\eqdef \cA(X,\wedge TX \otimes End(E))\simeq \PV(X){\hat \otimes}_\cinf\rGamma_\infty(X,End(E))~~,
\ee
where ${\hat \otimes}_\cinf$ denotes the graded tensor product and
$\PV(X)$ is endowed with the canonical $\Z_2$-grading. The {\em
twisted disk differential} $\Delta_a$ is the odd
$\O(X)$-linear differential on $\PV(X,End(E))$ defined through:
\be
\Delta_a\eqdef \updelta_W{\hat \otimes}_{\cA(X)} \id_{\cA(X,End(E))}+\id_{\PV(X)}{\hat \otimes}_{\cA(X)}\updelta_a~~.
\ee

\begin{Definition}
The {\em differential graded disk algebra of a holomorphic factorization $a=(E,D)$} is the
$\O(X)$-linear $\Z_2$-graded unital dg-algebra
$(\PV(X,End(E)),\Delta_a)$. The {\em cohomological disk algebra
  $\HPV(X,a)$ of $a$} is the total cohomology algebra:
\be
\HPV(X,a)\eqdef \rH(\PV(X,End(E)), \Delta_a)~~.
\ee
\end{Definition}

\noindent Let $\bbpd:= \bbpd_{\wedge TX\otimes
  End(E)}:\PV(X,End(E))\rightarrow \PV(X,End(E))$ be the Dolbeault
differential\footnote{Note that symbol $\bbpd$ will be used to
  simplify notation in three different cases: $\bbpd:= \bbpd_{\wedge
    TX\otimes End(E)}$ for the disk algebra, $\bbpd:= \bbpd_{\wedge
    TX}$ in the algebra of polyvector-valued forms and
  $\bbpd:=\bbpd_{a_1,a_2}:=\bbpd_{\End(E_1,E_2)}$ in Section
  \ref{sec:boundary}.}  of the holomorphic vector bundle $\wedge
TX\otimes End(E)$. Then:
\be
\Delta_a=\bbpd+\ioda_W+\md_a~~,
\ee
where we trivially extended $\ioda_W$ and $\md_a$ to mutually
anti-commuting differentials in $\PV(X,End(E))$ \cite{nlg1}. Notice that
$\bbpd$ anticommutes with $\ioda_W$ and $\md_a$.

\

\section{Analytic models for the cohomological twisted Dolbeault algebra $\HPV(X,W)$}
\label{sec:bulk}

\noindent Let $(X,W)$ be a Landau-Ginzburg pair of dimension $d$. 

\subsection{The generally-valid analytic model} 

Let $\H(\cK_W)$ denote the hypercohomology of the sheaf Koszul complex
\eqref{Koszul}, viewed as a finite complex of sheaves of $\cO_X$-modules
concentrated in non-positive degrees.

\

\begin{Proposition}
There exists a natural isomorphism of $\Z$-graded $\O(X)$-modules: 
\be
\HPV(X,W)\simeq_{\O(X)} \H(\cK_W)~~,
\ee
where $\HPV(X,W)$ is endowed with the canonical $\Z$-grading. Thus: 
\ben
\label{Hbicomplex}
\rH^k(\PV(X),\updelta_W)\simeq_{\O(X)} \H^k(\cK_W)~,~~\forall k\in \{-d,\ldots, d\}~~.
\een
Moreover, we have:
\ben
\label{HJacSpSeq}
\H^k(\cK_W)=\bigoplus_{i+j=k}\bE_\infty^{i,j}~~,
\een
where $\bE_\infty^{i,j}$ is the limit of the spectral sequence $\bE:=(\bE_r^{i,j},\dd_r)_{r\geq 0}$ which
starts with:
\ben
\label{E0}
\bE_0^{i,j}\eqdef \PV^{i,j}(X)= \cA^j(X,\wedge^{|i|} TX)~,~~\dd_0\eqdef\bbpd:=\bbpd_{\wedge TX}  ~,~~( i=-d,\ldots, 0~,~~ j=0,\ldots, d)~.
\een
The zeroth page of this sequence is shown in Figure \ref{diag1}.
\end{Proposition}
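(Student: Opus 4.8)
The plan is to identify $\HPV(X,W)$ with the hypercohomology $\H(\cK_W)$ by reading the bicomplex $(\PV(X),\bbpd,\ioda_W)$ as a Dolbeault-type resolution of the sheaf complex $\cK_W$. Recall that $\PV^{i,j}(X)=\cA^j(X,\wedge^{|i|}TX)$ is the space of smooth $(0,j)$-forms valued in the holomorphic bundle $\wedge^{|i|}TX$, and that the Dolbeault differential $\bbpd=\bbpd_{\wedge TX}$ acts only in the $j$-direction while $\ioda_W$ acts only in the $i$-direction. First I would fix the column filtration on the total complex $(\PV(X),\updelta_W)$ by the $\bbpd$-degree $j$, i.e.\ by the form-degree. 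This is the standard filtration producing the hypercohomology spectral sequence of a complex of sheaves; its zeroth page is exactly \eqref{E0}, $\bE_0^{i,j}=\PV^{i,j}(X)=\cA^j(X,\wedge^{|i|}TX)$ with $\dd_0=\bbpd$, which gives the last assertion of the Proposition essentially by definition of the filtration, once convergence is in hand.

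Next I would compute the first page. For each fixed $i$, the column $(\PV^{i,\bullet}(X),\bbpd)=(\cA^\bullet(X,\wedge^{|i|}TX),\bbpd_{\wedge^{|i|}TX})$ is the Dolbeault complex of the holomorphic vector bundle $\wedge^{|i|}TX$, so by the Dolbeault theorem its cohomology in degree $j$ is the sheaf cohomology $\rH^j(X,\wedge^{|i|}TX)=\rH^j(X,\wedge^{|i|}TX_{\cO_X})$, where $\wedge^{|i|}TX$ on the right denotes the associated locally free $\cO_X$-module sheaf. Thus $\bE_1^{i,j}\cong \rH^j(X,\wedge^{|i|}TX)$, and the $\dd_1$ differential is the map induced on sheaf cohomology by $\ioda_W$; comparing with the Čech (or Godement) computation of hypercohomology of $\cK_W$, this is precisely the $E_1$-page of the second spectral sequence of the hypercohomology of the complex of sheaves $\cK_W$ built from an injective (or acyclic) resolution. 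The core of the argument is therefore a comparison-of-spectral-sequences statement: the Dolbeault-resolved double complex and the injective-resolution double complex both compute $\H(\cK_W)$ and both have the same $E_1$-page, hence the same limit. Equivalently and more cleanly, I would invoke that the sheafified Dolbeault complex $\mathcal{A}^{0,\bullet}_X(\wedge^{|i|}TX)$ is a fine (hence acyclic, hence $\Gamma$-acyclic) resolution of $\wedge^{|i|}TX$, so the total complex $\bigoplus_{i}\mathcal{A}^{0,\bullet}_X(\wedge^{|i|}TX)$ with differential $\bbpd+\ioda_W$ is a complex of $\Gamma$-acyclic sheaves quasi-isomorphic to $\cK_W$; taking global sections computes $\H(\cK_W)$ on the nose, and this total complex of global sections is exactly $(\PV(X),\updelta_W)$. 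The $\O(X)$-linearity and naturality of the isomorphism are clear since every map in sight ($\bbpd$, $\ioda_W$, the Dolbeault isomorphism) is $\O(X)$-linear and functorial in $(X,W)$.

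The identification \eqref{Hbicomplex} of $\Z$-graded pieces is then the statement that the total-degree $k$ part of $\H(\cK_W)$ is $\rH^k$ of $(\PV(X),\updelta_W)$, which follows because the quasi-isomorphism above respects the total grading (total degree $=i+j$ on the left is matched with the shift convention placing $\cO_X$ in degree $0$ and $\wedge^{|i|}TX$ in degree $i$). The decomposition \eqref{HJacSpSeq}, $\H^k(\cK_W)=\bigoplus_{i+j=k}\bE_\infty^{i,j}$, is then just the statement that the filtration spectral sequence of a first-quadrant-type (here: bounded) double complex converges, with the associated graded of the filtration on $\H^k$ being $\bigoplus_{i+j=k}\bE_\infty^{i,j}$; the extension problem is trivial here because everything is a module over the field $\C$... more precisely over $\O(X)$, but the filtration is finite so the graded pieces reassemble (as $\O(X)$-modules, though a priori only up to the filtration, which is all that is claimed).

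The main obstacle I anticipate is not conceptual but bookkeeping: getting the two grading conventions to match — the cohomological degree of the sheaf complex $\cK_W$ (concentrated in degrees $-d,\dots,0$) versus the canonical $\Z$-grading $k=i+j$ on $\PV(X)$ (with $i\in\{-d,\dots,0\}$, $j\in\{0,\dots,d\}$) — and making sure the hypercohomology spectral sequence one writes down is the one filtered by $j$ and not by $i$, so that the $\bE_0$-page is \eqref{E0} rather than its transpose. A secondary point requiring a line of care is that the double complex is bounded in both directions (since $0\le j\le d$ and $-d\le i\le 0$), so there is no convergence subtlety and the two hypercohomology spectral sequences genuinely both converge to $\H(\cK_W)$; this is where one uses that $X$ has finite dimension $d$. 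Everything else — fineness of the smooth Dolbeault sheaves, the Dolbeault isomorphism, $\O(X)$-linearity — is standard and can be cited.
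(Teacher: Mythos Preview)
Your proposal is correct and follows essentially the same approach as the paper: both use that the sheafified Dolbeault complexes $\cA^{0,\bullet}_X(\wedge^{|i|}TX)$ are fine (hence $\Gamma$-acyclic) resolutions of the sheaves $\wedge^{|i|}TX$, so the bicomplex of global sections computes the hypercohomology of $\cK_W$, and the spectral sequence is the standard one of this bounded bicomplex. Your discussion of grading conventions, the choice of filtration, and convergence is more detailed than the paper's (which simply asserts the identification), and your caveat that \eqref{HJacSpSeq} is strictly an identification of associated gradeds is a point the paper glosses over.
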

\begin{figure}
\label{diag1}
\be
\scalebox{1.1}{
\xymatrixcolsep{1.8pc}\xymatrix{
\ar[r]^-{\ioda_W} \bE_0^{-d,d}\ar[r]^-{\ioda_W} & \bE_0^{-d+1,d} \ar[r]^-{\ioda_W} & \bE_0^{-d+2,d} \ar@{..}[r]^-{}&\bE_0^{0,d}\\
\ar[r]^-{\ioda_W}\bE_0^{-d,2}\ar@{..}[u]^{}& \bE_0^{-d+1,2} \ar[r]^-{\ioda_W}\ar@{..}[u]^{} & \bE_0^{-d+2,2}\ar@{..}[u]^{} \ar@{..}[ru]^{} \ar@{..}[r]^-{}& \bE_0^{0,2} \ar@{..}[u]^{}\\
\ar[r]^-{\ioda_W} \bE_0^{-d,1}\ar[u]^{\bbpd}& \bE_0^{-d+1,1}\ar[u]^{\bbpd}  \ar[r]^-{\ioda_W} & \bE_0^{-d+2,1}\ar[u]^{\bbpd}\ar@{..}[r]^-{}& \bE_0^{0,1} \ar[u]^{\bbpd}\\
\ar[r]^-{\ioda_W} \bE_0^{-d,0}\ar[u]^{\bbpd} & \bE_0^{-d+1,0}\ar[u]^{\bbpd} \ar[r]^-{\ioda_W} & \bE_0^{-d+2,0} \ar[u]^{\bbpd}\ar@{..}[r]^-{}& \bE_0^{0,0} \ar[u]^{\bbpd}}}
\ee
\caption{The zeroth page of the spectral sequence $(\bE_r^{i,j},\dd_r)_{r\geq 0}$. }
\end{figure}
\noindent 
\begin{proof}
To compute the hypercohomology of $\cK_W$, we can use\footnote{This is
  because the sheaf $\cA^k\otimes E$ is fine (and hence soft and
  acyclic) for any holomorphic vector bundle $E$ defined on $X$, where we 
identify $E$ with its locally-free sheaf of holomorphic sections.} the
Dolbeault resolutions of the sheaves $\wedge^k TX$:
\ben
\label{DolbRes}
0\rightarrow \wedge^k TX \stackrel{\bbpd}{\rightarrow} \cA^1\otimes \wedge^k TX 
\stackrel{\bbpd}{\rightarrow} \ldots \stackrel{\bbpd}{\rightarrow} \cA^d\otimes \wedge^k TX\rightarrow 0~~,
\een
where $\cA^j$ are the sheaves of smooth $(0,j)$-forms on $X$ (viewed
as sheaves of $\cO_X$-modules by restriction of scalars) and the
tensor product is taken over $\cO_X$. Notice that
$(\PV(X),\ioda_W,\bbpd)$ coincides with the $\Z\times \Z$-graded
bicomplex whose node $(i,j)$ is given by $\cA^j\otimes \wedge^{|i|}
TX$ and whose horizontal and vertical differentials are given
respectively by $\ioda_W$ and $\bbpd$. This
provides an acyclic resolution of the sheaf Koszul complex $\cK_W$ defined
in \eqref{Koszul}. The hypercohomology of $\cK_W$ coincides with the
total cohomology of this bicomplex, which in turn coincides with
$\HPV(X,W)$. Thus \eqref{Hbicomplex} holds. The spectral sequence
$(\bE_r^{i,j},\dd_r)_{r\geq 0}$ is the spectral sequence
determined by the bicomplex described above, showing that
\eqref{HJacSpSeq} holds.~\qed
\end{proof}

\subsection{Analytic models of $\HPV(X,W)$ for the Stein case}

\noindent We refer the reader to Appendix \ref{app:Stein} for the
relevant properties of Stein manifolds and in particular for Cartan's
theorem B (see Theorem \ref{thm:B}), which is crucial for what follows.

\

\begin{Theorem} 
\label{thm:BulkDeg}
Suppose that $X$ is Stein. Then the spectral sequence $\bE$ defined above
collapses at the second page $\bE_2$ and $\HPV(X,W)$ is concentrated in non-positive
degrees. For all $k=-d,\ldots, 0$, the $\O(X)$-module $\HPV^k(X,W)\eqdef\rH^k(\PV(X),\updelta_W)$ is
isomorphic with the cohomology at position $k$ of the following
sequence of finitely-generated projective $\O(X)$-modules:
\ben
\label{H0seq}
(\bK_W):~~0~ {\to} \rH^0(\wedge^d TX)\stackrel{\ioda_W}{\to} \rH^0(\wedge^{d-1} TX)
\stackrel{\ioda_W}{\to} \ldots \stackrel{\ioda_W}{\to} \rH^0(TX) \stackrel{\ioda_W}{\to} \O(X)\rightarrow 0~~,
\een
where $\O(X)$ sits in position zero. 
\end{Theorem}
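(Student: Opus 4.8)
The plan is to exploit the bicomplex $(\PV(X),\ioda_W,\bbpd)$ together with Cartan's theorem B. Recall from the previous proposition that $\HPV(X,W)$ is the total cohomology of this $\Z\times\Z$-graded bicomplex, whose node $(i,j)$ is $\cA^j(X,\wedge^{|i|}TX)$, and that the associated spectral sequence $\bE$ has $\bE_0$-page with vertical differential $\dd_0=\bbpd$. First I would compute the first page $\bE_1$. Taking $\bbpd$-cohomology in the vertical direction amounts, by the Dolbeault theorem for the holomorphic vector bundle $\wedge^{|i|}TX$, to computing the sheaf cohomology groups $\rH^j(X,\wedge^{|i|}TX)$. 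Since $X$ is Stein and $\wedge^{|i|}TX$ is a holomorphic (hence coherent) sheaf, Cartan's theorem B (Theorem \ref{thm:B}) gives $\rH^j(X,\wedge^{|i|}TX)=0$ for all $j\geq 1$. Therefore $\bE_1^{i,j}=0$ for $j\geq 1$, and the $\bE_1$-page is concentrated in the single row $j=0$, where $\bE_1^{i,0}=\rH^0(X,\wedge^{|i|}TX)=\rH^0(\wedge^{|i|}TX)$.

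Next I would identify the surviving differential. On the $\bE_1$-page the only possibly nonzero differential is $\dd_1$, which is the map induced by the horizontal differential $\ioda_W$ on the row $j=0$; concretely it is the $\O(X)$-linear map $\rH^0(\wedge^{|i|}TX)\xrightarrow{\ioda_W}\rH^0(\wedge^{|i|-1}TX)$ obtained by applying global sections to the sheaf map $\ioda_W$. Thus the $\bE_1$-page with its differential $\dd_1$ is precisely the complex $(\bK_W)$ displayed in \eqref{H0seq}. Since the $\bE_1$-page is a single row, all higher differentials $\dd_r$ ($r\geq 2$) necessarily vanish for degree reasons, so the spectral sequence degenerates at $\bE_2$; in particular $\bE_\infty=\bE_2=\rH((\bK_W))$. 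Combining this with \eqref{HJacSpSeq}, and noting that the complex $(\bK_W)$ is concentrated in degrees $k=-d,\ldots,0$ so that $\bE_2^{i,j}=0$ unless $j=0$ and $i\in\{-d,\ldots,0\}$, we conclude that $\HPV^k(X,W)\cong \rH^k((\bK_W))$ for all $k$, that this vanishes for $k>0$, and hence that $\HPV(X,W)$ is concentrated in non-positive degrees. The collapse statement at $\bE_2$ is exactly what was claimed.

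It remains to check that the terms $\rH^0(\wedge^p TX)$ appearing in $(\bK_W)$ are finitely-generated projective $\O(X)$-modules. This follows from the Serre–Swan-type theorem for Stein manifolds (cited in the paper as \cite{ForsterStein,Morye}): the global sections functor $\rH^0(X,-)$ sends the locally free coherent sheaf $\wedge^p TX$, of finite rank, to a finitely-generated projective module over $\O(X)$. So every entry of the sequence $(\bK_W)$ is finitely-generated projective, and the $\O(X)$-linearity of $\ioda_W$ on global sections makes $(\bK_W)$ a complex in $\proj_{\O(X)}$, as asserted.

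The main obstacle, and the only genuinely nontrivial input, is the vanishing $\rH^{j\geq 1}(X,\wedge^p TX)=0$: this is where Steinness enters in an essential way via Cartan's theorem B, and it is what forces the $\bE_1$-page onto a single row and thereby produces both the degeneration at $\bE_2$ and the concentration in non-positive degrees. Everything else — identifying $\dd_1$ with $\ioda_W$ on global sections, the automatic vanishing of higher differentials, and the projectivity of the modules $\rH^0(\wedge^p TX)$ — is formal once this vanishing is in hand. One minor point to be careful about is the bookkeeping of the grading conventions (the bicomplex is concentrated in bidegrees $i\in\{-d,\ldots,0\}$, $j\in\{0,\ldots,d\}$, and $\cO_X$ sits in degree zero), so that the position $k$ in $(\bK_W)$ matches the total degree $k=i+j$ with $j=0$; this is routine but should be spelled out to make the identification $\HPV^k(X,W)\cong\rH^k((\bK_W))$ unambiguous.
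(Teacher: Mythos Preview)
Your proposal is correct and follows essentially the same approach as the paper's proof: both use Cartan's theorem B to show that the $\bE_1$-page is concentrated in the row $j=0$ with nodes $\rH^0(\wedge^{|i|}TX)$ and differential $\dd_1=\ioda_W$, conclude degeneration at $\bE_2$ from the single-row structure, and invoke the Serre--Swan theorem for Stein manifolds to obtain finite-generated projectivity of the terms. Your write-up is in fact slightly more detailed than the paper's in explaining the Dolbeault identification and the grading bookkeeping.
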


\

\begin{proof}
Since $X$ is Stein, Cartan's theorem B implies
$\bE_1^{i,j}=\rH^j_{\bbpd}(\cA(X,\wedge^{|i|} TX))=0$ for $j>0$ and
all $i=-d,\ldots ,0$. Thus the only non-trivial row of the page
$\bE_1$ of the spectral sequence is the bottom row
$\bE_1^{\bullet,0}$ with differential $\dd_1\eqdef\ioda_W$, whose nodes are given 
by (see Figure \ref{diag2}):
\be
\bE_1^{i,0}= \rH^0_{\bbpd}(\cA(X,\wedge^{|i|} TX))=\rH_{\bbpd}(\PV^{i,0}(X))=\rGamma(X,\wedge^{|i|} TX)=\rH^0(\wedge^{|i|} TX)~~
\ee
for all $i=-d,\ldots, 0$. Thus page $\bE_1$ reduces to 
the form shown in Figure \ref{diag2}.
\begin{figure}
\label{diag2}
\be
\scalebox{1.0}{
\xymatrixcolsep{1.8pc}
\xymatrix{
\bE_1^{-d,d}\!=\!0~\ar[r]^-{} &~ \bE_1^{-d+1,d}\!=\!0~ \ar[r]^-{} & ~\bE_1^{-d+2,d}\!=\!0 ~\ar@{..}[r]^-{}&~\bE_1^{0,d}\!=\!0\\
\bE_1^{-d,2}\!=\!0~\ar@{..}[u]^{} \ar[r]^-{}& ~\bE_1^{-d+1,2}\!=\!0~ \ar[r]^-{}\ar@{..}[u]^{} &~ \bE_1^{-d+2,2}\!=\!0~\ar@{..}[u]^{} \ar@{..}[ru]^{} \ar@{..}[r]^-{}&~ \bE_1^{0,2}\!=\!0 \ar@{..}[u]^{}\\
\bE_1^{-d,1}\!=\!0~\ar[u]^{\bbpd}\ar[r]^-{}& ~\bE_1^{-d+1,1}\!=\!0~ \ar[u]^{\bbpd}  \ar[r]^-{} & ~\bE_1^{-d+2,1}\!=\!0~\ar[u]^{\bbpd}\ar@{..}[r]^-{}&~ \bE_1^{0,1}\!=\!0 \ar[u]^{\bbpd}\\
\bE_1^{-d,0}~\ar[u]^{\bbpd} \ar[r]^-{\ioda_W} & ~\bE_1^{-d+1,0}~\ar[u]^{\bbpd} \ar[r]^-{\ioda_W} & ~\bE_1^{-d+2,0} ~\ar[u]^{\bbpd}\ar@{..}[r]^-{}& ~\bE_1^{0,0} \ar[u]^{\bbpd}
}}
\ee
\caption{The first page of the spectral sequence $(\bE_r^{i,j},\dd_r)_{r\geq 0}$. }
\end{figure}
We note that all differentials $\dd_2$ of
$\bE_2^{\bullet,\bullet}$ will be trivial, since they are maps between
different rows. Hence the spectral sequence collapses at $\bE_2$ and
we have
$\bE_\infty^{k}=\bE_2^{k,0}=\rH^k_{\ioda_W}(\bE_1^{\bullet,0})=\rH^k(\bK_W)$
for all $k=-d,\ldots,0$. Since $\wedge^k TX$ are vector bundles, the
Serre-Swan theorem for Stein manifolds (see \cite{ForsterStein,Morye})
implies that \eqref{H0seq} is a sequence of finitely-generated
projective $\O(X)$-modules.~~\qed
\end{proof}

\

\begin{Proposition}
\label{prop:HJac}
Suppose that $X$ is Stein and $\dim_\C Z_W=0$. Then $\HPV^k(X,W)=0$ for
$k\neq 0$ and there exists a natural isomorphism of $\O(X)$-modules:
\beqan
\label{HPVStein}
\HPV^0(X,W)\simeq_{\O(X)} \rH^0(Jac_W)=\Jac(X,W)~~\nn~~.
\eeqan
Moreover, we have an isomorphism of $\O(X)$-algebras:
\ben
\label{JacAlgebraStein}
\Jac(X,W)\simeq \O(X)/\J(X,W)~~.
\een
\end{Proposition}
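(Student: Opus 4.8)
The plan is to build on Theorem~\ref{thm:BulkDeg}, which already tells us that $\HPV^k(X,W)$ is the cohomology at position $k$ of the complex $(\bK_W)$ of finitely-generated projective $\O(X)$-modules obtained by applying the global sections functor $\rH^0(-)$ to the sheaf Koszul complex $(\cK_W)$. First I would invoke Proposition~\ref{prop:Koszul}: since $\dim_\C Z_W=0$, the sequence \eqref{KoszulRes} is exact, so $(\cK_W)$ is a locally free resolution of $Jac_W$ and is exact at every term except in degree $0$, where its cohomology sheaf is $Jac_W$. Next I would apply Cartan's Theorem B (Theorem~\ref{thm:B}) once more: because $X$ is Stein and all the sheaves $\wedge^k TX$ appearing in \eqref{KoszulRes} are coherent (indeed locally free), the global sections functor $\rH^0(-)=\rGamma(X,-)$ is exact on this resolution. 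Concretely, break \eqref{KoszulRes} into short exact sequences of coherent sheaves; the associated long exact cohomology sequences degenerate because $\rH^{>0}(X,\mathcal{F})=0$ for every coherent $\mathcal{F}$, so $\rGamma(X,-)$ carries the exact sequence \eqref{KoszulRes} to an exact sequence of $\O(X)$-modules. Hence the complex $(\bK_W)$ is exact except at position $0$, where its cohomology is $\rGamma(X,Jac_W)=\rH^0(Jac_W)=\Jac(X,W)$. Combined with Theorem~\ref{thm:BulkDeg} this gives $\HPV^k(X,W)=0$ for $k\neq 0$ and the natural isomorphism $\HPV^0(X,W)\simeq_{\O(X)}\Jac(X,W)$; naturality is inherited from the naturality of the isomorphism in Theorem~\ref{thm:BulkDeg} together with that of the Dolbeault-to-global-sections comparison.

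For the algebra isomorphism \eqref{JacAlgebraStein}, the point is to identify $\rH^0(Jac_W)=\rH^0(\cO_X/\cJ_W)$ with $\O(X)/\J(X,W)=\O(X)/\cJ_W(X)$. Apply $\rGamma(X,-)$ to the short exact sequence of coherent sheaves $0\to\cJ_W\to\cO_X\to Jac_W\to 0$. The long exact sequence reads $0\to\cJ_W(X)\to\O(X)\to\rH^0(Jac_W)\to\rH^1(X,\cJ_W)\to\cdots$, and Cartan's Theorem B forces $\rH^1(X,\cJ_W)=0$ since $\cJ_W$ is coherent (it is the image sheaf of a map of coherent sheaves, hence coherent). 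Therefore $\O(X)/\cJ_W(X)\xrightarrow{\;\sim\;}\rH^0(Jac_W)$, and this map is a ring homomorphism because the sheaf surjection $\cO_X\to Jac_W$ is a morphism of sheaves of $\cO_X$-algebras and $\rGamma(X,-)$ is lax monoidal (in fact monoidal here on the relevant sheaves). By definition $\cJ_W(X)=\J(X,W)$, so this yields the $\O(X)$-algebra isomorphism $\Jac(X,W)\simeq\O(X)/\J(X,W)$.

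The step that needs the most care—and the real content beyond bookkeeping—is the exactness of $\rGamma(X,-)$ on the Koszul resolution, i.e.\ the vanishing of the higher sheaf cohomology groups. This is exactly where the Stein hypothesis enters in an essential way through Cartan's Theorem B: without it the long exact sequences would not degenerate and one would only get the complex $(\bK_W)$ together with connecting maps into $\rH^{>0}$ terms, i.e.\ the spectral sequence of Theorem~\ref{thm:BulkDeg} would not collapse to a single column. A secondary subtlety is making sure that every sheaf to which we apply Theorem~B is genuinely coherent: the bundles $\wedge^k TX$ are locally free hence coherent, and $\cJ_W=\im(\ioda_W)$ is coherent as the image of a morphism of coherent sheaves on a (locally Noetherian) analytic space, so the hypotheses of Theorem~B are met throughout. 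Once these cohomology vanishings are in hand, the rest is a routine diagram chase through the broken-up short exact sequences, so I would present the argument compactly by first reducing \eqref{KoszulRes} to the single short exact sequence $0\to\cJ_W\to\cO_X\to Jac_W\to 0$ (using Proposition~\ref{prop:Koszul} to handle the higher Koszul terms) and then applying Theorem~B to that sequence.
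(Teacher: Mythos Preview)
Your argument is correct. It uses the same three ingredients as the paper---Proposition~\ref{prop:Koszul}, Theorem~\ref{thm:BulkDeg}, and Cartan's Theorem~B---but assembles them in a different order. You start from Theorem~\ref{thm:BulkDeg} (so you are already working with the global-sections complex $(\bK_W)$) and then show that applying $\rGamma(X,-)$ to the exact Koszul resolution \eqref{KoszulRes} preserves exactness, by breaking into short exact sequences and invoking Theorem~B on each coherent piece. The paper instead first uses the hypercohomology identification $\HPV^k(X,W)\simeq\H^k(\cK_W)$ (valid for any $X$), then observes that since $(\cK_W)$ is a resolution of $Jac_W$ its hypercohomology reduces to ordinary sheaf cohomology $\rH^k(Jac_W)$, and only then applies Cartan's Theorem~B a single time to the coherent sheaf $Jac_W$; Theorem~\ref{thm:BulkDeg} is invoked only at the end to extract the algebra presentation \eqref{JacAlgebraStein}. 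Your route is slightly more hands-on and avoids the hypercohomology step, at the cost of applying Theorem~B to several sheaves rather than one; the paper's route is marginally slicker but relies on the general hypercohomology machinery already set up. Both yield the same conclusion with no essential difference in difficulty.
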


\begin{proof}
Lemma \ref{prop:Koszul} implies that the hypercohomology $\H^k(\cK_W)$
of the Koszul complex \eqref{Koszul} coincides with the sheaf cohomology
of the Jacobi sheaf $Jac_W$:
\ben
\H^k(\cK_W)=\rH^k(Jac_W)~~.
\een
Combining this with \eqref{Hbicomplex} gives: 
\ben
\label{HJac}
\HPV^k(X,W)\simeq_{\O(X)}\rH^k(Jac_W)~,~~\forall k \in \Z~~.
\een
Thus $\HPV^k(X,W)=0$ for $k\neq 0$ by Cartan's Theorem B (see Appendix
\ref{app:Stein}) and $\HPV^0(X,W)\simeq_{\O(X)}
\rH^0(Jac_W)=\Jac(X,W)$.  Since $X$ is Stein, Theorem
\ref{thm:BulkDeg} shows that $\HPV^0(X,W)$ also coincides with the
cohomology of the sequence \eqref{H0seq} at position zero, which
equals $\O(X)/\im(\ioda_W:\rH^0(TX)\rightarrow
\O(X))=\O(X)/\J(X,W)$. This shows that \eqref{JacAlgebraStein}
holds. ~\qed
\end{proof}

\begin{remark}
In applications to physics, the set $Z_W$ is usually assumed to be
compact, since this condition implies \cite{nlg1} finite-dimensionality
of $\HPV(X,W)$ as well as Hom-finiteness of the category $\HDF(X,W)$
over $\C$. In such applications, the condition $\dim_\C Z_W=0$ is
automatically satisfied when $X$ is Stein, provided that the critical set is compact (see Remark \ref{rem:SteinFinite}).
\end{remark}

\subsection{Analytic model of $\HPV(X,W)$ when $X$ is Stein and holomorphically parallelizable with $\dim_\C Z_W=0$}

Recall that $X$ is called {\em holomorphically parallelizable} if its
holomorphic tangent bundle $TX$ is holomorphically trivial. This
happens iff $X$ admits global holomorphic frames\,\footnote{A global
  holomorphic frame of $X$ is a family of globally-defined holomorphic
  vector fields $u_1,\ldots, u_d\in \rGamma(X,TX)$ such that the
  vectors $u_1(x),\ldots, u_d(x)$ form a basis of $T_xX$ for any point
  $x\in X$.}.

\

\begin{Lemma}
\label{lemma:par}
Suppose that $X$ is holomorphically parallelizable and let
$u_1,\ldots, u_d$ be any globally-defined holomorphic frame of
$TX$. Then the critical sheaf $\cJ_W$ is generated on every open
subset of $X$ by the restrictions of the holomorphic functions
$u_1(W),\ldots, u_d(W)\in \O(X)$. In particular, the critical ideal
$\J(X,W)$ is generated by these holomorphic functions:
\be
\J(X,W)=\langle u_1(W), \ldots, u_d(W) \rangle~~.
\ee
\end{Lemma}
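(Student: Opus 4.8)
The plan is to work locally and then globalize. The key point is that $\ioda_W$ applied to a holomorphic vector field $v$ is, by definition \eqref{iodaW}, the function $-\i\, (\pd W)(v) = -\i\, v(W)$ (the derivative of $W$ along $v$). First I would fix an arbitrary open subset $U \subset X$ and show that $\cJ_W(U)$ is generated by the restrictions $u_1(W)|_U, \ldots, u_d(W)|_U$. Since $u_1, \ldots, u_d$ is a global holomorphic frame, every holomorphic vector field $v \in \cO_X(U)$-linearly decomposes as $v = \sum_{i=1}^d f_i\, u_i|_U$ with uniquely determined $f_i \in \cO_X(U)$ (the dual frame gives the coefficients, and these are holomorphic because the frame is holomorphic). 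Then $\ioda_W(v) = -\i\, v(W) = \sum_i f_i \cdot (-\i\, u_i(W))|_U = \sum_i f_i \cdot \ioda_W(u_i)|_U$. This shows $\cJ_W(U) = \im(\ioda_W : TX(U) \to \cO_X(U))$ is exactly the $\cO_X(U)$-submodule of $\cO_X(U)$ generated by $\ioda_W(u_1)|_U, \ldots, \ioda_W(u_d)|_U$, i.e. by $u_1(W)|_U, \ldots, u_d(W)|_U$ (the factor $-\i$ being a unit is irrelevant).

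Next I would pass to global sections. Taking $U = X$ in the above gives immediately
\be
\J(X,W) = \cJ_W(X) = \ioda_W(\rGamma(X,TX)) = \langle \ioda_W(u_1), \ldots, \ioda_W(u_d)\rangle_{\O(X)} = \langle u_1(W), \ldots, u_d(W)\rangle~~,
\ee
using that $\rGamma(X,TX)$ is a free $\O(X)$-module on $u_1, \ldots, u_d$ (again because the frame is global and holomorphic), so that every global holomorphic vector field is an $\O(X)$-combination of the $u_i$, and $\ioda_W$ is $\O(X)$-linear.

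The statement that $\cJ_W$ is generated \emph{on every open subset} by these (fixed, globally-defined) functions is then just the sheaf-theoretic reformulation: an ideal sheaf $\cI \subset \cO_X$ equals the ideal sheaf generated by global sections $g_1, \ldots, g_d \in \cO_X(X)$ precisely when $\cI(U) = \langle g_1|_U, \ldots, g_d|_U\rangle_{\cO_X(U)}$ for all open $U$, which is exactly what the local computation of the first paragraph established with $g_i = u_i(W)$.

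I do not anticipate a genuine obstacle here: the only thing to be careful about is that the coefficient functions $f_i$ in the decomposition $v = \sum_i f_i u_i$ are holomorphic, not merely smooth — but this is automatic since the transition between a holomorphic frame and a holomorphic vector field is governed by a holomorphic (invertible) matrix. No Stein hypothesis, no Cartan theorem B, and no finiteness of the critical set are needed for this lemma; holomorphic parallelizability alone does all the work.
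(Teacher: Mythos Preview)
Your proof is correct and follows essentially the same route as the paper's own argument: the paper packages the decomposition $v=\sum_i f_i u_i|_U$ as a sheaf isomorphism $\psi:\cO_X^{\oplus d}\stackrel{\sim}{\to} TX$ induced by the frame, identifies $\ioda_W$ with the map $\tau:(f_1,\ldots,f_d)\mapsto -\i\sum_i f_i\, u_i(W)$, and reads off $\cJ_W=\im\tau=\sum_i \cO_X\, u_i(W)$ --- which is exactly your computation phrased at the level of sheaves rather than sections over each $U$. Your closing observations (holomorphicity of the $f_i$, and that no Stein or finiteness hypotheses are needed) are apt and match the paper's use of the lemma.
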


\begin{proof}
The holomorphic frame $u_1,\ldots, u_d$ of $X$ induces an isomorphism of
locally-free sheaves:
\be
\psi:\cO_X^{\oplus d}\stackrel{\sim}{\rightarrow} TX
\ee 
given on open subsets $U\subset X$ by:
\be
\psi_U(f_1,\ldots, f_d)\eqdef \sum_{i=1}^d f_i u_i\in \rGamma(U,TX)~,~~\forall f_1,\ldots, f_d\in \cO_X(U)~~.
\ee
Since $\ioda_W(u_i)= -\i u_i(W)\in \O(X)$, the morphism of sheaves
$\ioda_W:TX\rightarrow \cO_X$ can be identified with the morphism
$\tau:\cO_X^{\oplus d}\rightarrow \cO_X$ which is given on open
sets $U$ by:
\be
\tau_U(f_1,\ldots, f_d)\eqdef -\i \sum_{i=1}^d f_i ~[u_i(W)]|_U\in \cO_X(U)~,~~\forall f_1,\ldots, f_d\in \cO_X(U)~~.
\ee
This implies $\cJ_W\!=\!\im \tau\!=\!\sum_{i=1}^d\! \cO_X u_i(W)$ and in
particular $\J(X,W)\!=\!\cJ_W(X)\!= \!\sum_{i=1}^d\! \O(X) u_i(W)$. Thus
$\J(X,W)$ coincides with the ideal $\langle u_1(W),\ldots,
u_d(W)\rangle$ generated inside $\O(X)$ by the holomorphic functions
$u_i(W)$.~\qed
\end{proof}

\

\begin{Proposition} 
\label{prop:BulkPar}
Let $(X,W)$ be a Landau-Ginzburg pair such that $X$ is Stein and
holomorphically parallelizable and such that $\dim_\C Z_W=0$. Let
$u_1,\ldots, u_d$ be any holomorphic frame of $X$. Then
$\HPV(X,W)=\HPV^0(X,W)\simeq_{\O(X)} \Jac(X,W)$ and we have:
\be
\Jac(X,W)=\O(X)/\langle u_1(W),\ldots, u_d(W)\rangle~~.
\ee
\end{Proposition}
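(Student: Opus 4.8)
The plan is to deduce this proposition directly from Proposition~\ref{prop:HJac} and Lemma~\ref{lemma:par}, both of which apply under the stated hypotheses. Since $X$ is Stein and $\dim_\C Z_W = 0$, Proposition~\ref{prop:HJac} gives $\HPV^k(X,W) = 0$ for all $k \neq 0$; hence the $\Z$-graded $\O(X)$-module $\HPV(X,W)$ is concentrated in degree zero and therefore coincides with $\HPV^0(X,W)$. The same proposition furnishes the isomorphism of $\O(X)$-algebras $\HPV^0(X,W) \simeq_{\O(X)} \Jac(X,W) \simeq \O(X)/\J(X,W)$, so that it only remains to rewrite the critical ideal $\J(X,W)$ in terms of a holomorphic frame.

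This is where the hypothesis that $X$ is holomorphically parallelizable enters: fixing a global holomorphic frame $u_1,\ldots,u_d$ of $TX$ (one exists precisely because $TX$ is holomorphically trivial), Lemma~\ref{lemma:par} identifies $\J(X,W)$ with the ideal $\langle u_1(W),\ldots,u_d(W)\rangle$ of $\O(X)$. Substituting this into \eqref{JacAlgebraStein} yields $\Jac(X,W) \simeq \O(X)/\langle u_1(W),\ldots,u_d(W)\rangle$, which is the desired description; combined with the first part, this completes the argument.

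There is no genuine obstacle here: the proposition is a corollary obtained by chaining two results already established in the excerpt, and the only thing to check is that their hypotheses ($X$ Stein, $X$ holomorphically parallelizable, $\dim_\C Z_W = 0$) are all among the assumptions, which they are. If one wishes to be scrupulous, one may additionally record that the displayed identifications respect the $\O(X)$-algebra structures and not merely the module structures, since both \eqref{JacAlgebraStein} and the conclusion of Lemma~\ref{lemma:par} are phrased at the level of ideals and quotient algebras; no independent verification of multiplicativity is needed beyond what Proposition~\ref{prop:HJac} already provides.
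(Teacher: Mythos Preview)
Your proof is correct and follows exactly the same route as the paper: invoke Proposition~\ref{prop:HJac} (using that $X$ is Stein with $\dim_\C Z_W=0$) to obtain $\HPV(X,W)=\HPV^0(X,W)\simeq_{\O(X)}\Jac(X,W)\simeq \O(X)/\J(X,W)$, and then apply Lemma~\ref{lemma:par} (using holomorphic parallelizability) to identify $\J(X,W)=\langle u_1(W),\ldots,u_d(W)\rangle$.
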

 
\begin{proof}
Follows immediately from Lemma \ref{lemma:par} and Proposition \ref{prop:HJac}. ~\qed
\end{proof}

\section{Analytic models for the category $\HDF(X,W)$}
\label{sec:boundary}

\noindent Let $(X,W)$ be a Landau-Ginzburg pair of dimension $d$. 

\subsection{A generally-valid analytic model}

Let $a_1\eqdef (E_1,D_1)$ and $a_2\eqdef (E_2,D_2)$ be two holomorphic
factorizations of $W$. Let $\bbpd:= \bbpd_{a_1,a_2}$ and $\md:=
\md_{a_1,a_2}$ be the Dolbeault and defect differentials on
$\Hom_{\DF(X,W)}(a_1,a_2)=\cA(X,Hom(E_1,E_2))$. Let
$\updelta:=\updelta_{a_1,a_2}$ be the twisted differential. Consider
the complex $(\Hom_{\DF(X,W)}(a_1,a_2),\updelta)$ (endowed with the
total $\Z_2$-grading), whose total cohomology equals the $\Z_2$-graded
$\O(X)$-module $\Hom_{\HDF(X,W)}(a_1,a_2)$.

\

\begin{Definition}
\label{Def_unwin}
The {\em unwinding} of the $\Z\times \Z_2$-graded bicomplex
$(\cA(X,Hom(E_1,E_2)),\bbpd,\md)$ is the $\Z\times \Z$-graded complex
$\cC^{\bullet,\bullet}$ of $\O(X)$-modules with homogeneous components:
\be
\cC^{i,j} \eqdef \cA^j(X, Hom^{{\hat i}}(E_1,E_2))~,~~\forall i, j\in \Z~~,
\ee
whose non-trivial horizontal and vertical differentials are $\md$ and
$\bbpd$, both of which have degree $+1$ (see Diagram 3). This
bicomplex is 2-periodic in the horizontal direction and concentrated
in degrees $0,\ldots, d$ in the vertical direction, thus $\cC^{i,j}=0$
for $j\not \in \{0,\ldots, d\}$ and all $i$.
\end{Definition}
\begin{figure}
\label{diag3}
\be
\scalebox{1.1}{
\xymatrixcolsep{1.8pc} \xymatrix{
\ar@{..}[r]^-{} & \cC^{-2,d} \ar[r]^-{\md}& \cC^{-1,d} \ar[r]^-{\md} &  \ar[r]^-{\md}\cC^{0,d}& \cC^{1,d} \ar[r]^-{\md} & \cC^{2,d} \ar@{..}[r]^-{}&\\
\ar@{..}[r]^-{} & \cC^{-2,2}\ar@{..}[u]^{} \ar[r]^-{\md}& \cC^{-1,2} \ar[r]^-{\md}\ar@{..}[u]^{} &  \ar[r]^-{\md}\cC^{0,2}\ar@{..}[u]^{}& \cC^{1,2} \ar[r]^-{\md}\ar@{..}[u]^{} & \cC^{2,2}\ar@{..}[u]^{} \ar@{..}[r]^-{}&\\
\ar@{..}[r]^-{} & \ar[r]^-{\md}\cC^{-2,1}\ar[u]^{\bbpd}& \cC^{-1,1}\ar[u]^{\bbpd}  \ar[r]^-{\md} & \ar[r]^-{\md}\cC^{0,1}\ar[u]^{\bbpd}& \cC^{1,1}\ar[u]^{\bbpd}  \ar[r]^-{\md} & \cC^{2,1}\ar[u]^{\bbpd}\ar@{..}[r]^-{}&\\
\ar@{..}[r]^-{} & \ar[r]^-{\md} \cC^{-2,0}\ar[u]^{\bbpd} & \cC^{-1,0}\ar[u]^{\bbpd} \ar[r]^-{\md} & \ar[r]^-{\md} \cC^{0,0}\ar[u]^{\bbpd} & \cC^{1,0}\ar[u]^{\bbpd} \ar[r]^-{\md} & \cC^{2,0} \ar[u]^{\bbpd}\ar@{..}[r]^-{}&}
}
\ee 
\caption{The graded bicomplex $(\cC^{i,j}, \md,\bbpd)$.}
\end{figure}

Consider the total complex $\cC^{\bullet}$ of the bicomplex
$\cC^{\bullet,\bullet}$. Its homogeneous components are:
\be
\cC^k \eqdef \bigoplus_{i+j=k}\cC^{i,j}~,~~\forall k\in \Z~~
\ee
and its differential equals $\updelta=\bbpd+\md$. Note that the sum on the right
hand side has only a finite number of nonzero terms. The complex $\cC^{\bullet}$
is $2$-periodic:
\be
\cC^k=\cC^{k+2}~,~~\forall k\in \Z
\ee
and the same holds for its cohomology $\rH^k(\cC^{\bullet})$:
\be
\rH^k(\cC^{\bullet})=\rH^{k+2}(\cC^{\bullet})~,~~\forall k\in \Z~~.
\ee
We have:
\ben
\label{HC}
\rH^k(\cC)=\rH^{{\hat k}}(\cA(X,Hom(E_1,E_2)),\updelta)=\Hom_{\HDF(X,W)}^{\hat k}(a_1,a_2)~,~~\forall k\in \Z~~.
\een
Consider the  spectral sequence  $\bE=(\bE_r^{\bullet,\bullet},{\dd}_r)_{r\geq 0}$ 
whose zeroth page is given by:
\be
\label{ss_bndr}
\bE_0^{i, j}=\cC^{i,j}=\cA^j(X, Hom^{{\hat i}}(E_1,E_2))
\ee
endowed with the vertical differential ${\dd}_0\eqdef\bbpd$ shown in Figure \ref{diag3}.

\

\begin{Proposition}
\label{prop:Boundary}
The spectral sequence $\bE$ defined above converges. Moreover, we have
a natural isomorphism of $\O(X)$-modules:
\be
\bigoplus_{i+j=t}{\bE}_{d+2}^{i,j}\simeq_{\O(X)} \Hom_{\HDF(X,W)}^{\hat t}(a_1,a_2)~,~~\forall t\in \Z~~.
\ee
\end{Proposition}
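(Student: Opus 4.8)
The plan is to recognize $\bE$ as the spectral sequence attached to the cochain complex $\cC^\bullet$ filtered by the horizontal degree $i$, and then to run the classical convergence-and-collapse argument, the only genuine care being that the bicomplex $\cC^{\bullet,\bullet}$ is \emph{not} first-quadrant: by $2$-periodicity it is unbounded in the horizontal direction. First I would record that, by Definition \ref{Def_unwin}, $\cC^{\bullet,\bullet}$ is concentrated in the finitely many rows $j\in\{0,\dots,d\}$. Hence for each total degree $t$ the module $\cC^t=\bigoplus_{i+j=t}\cC^{i,j}$ is the finite direct sum $\cC^{t,0}\oplus\cC^{t-1,1}\oplus\cdots\oplus\cC^{t-d,d}$, and the decreasing filtration $F^p\cC^t\eqdef\bigoplus_{i\geq p,\,i+j=t}\cC^{i,j}$ satisfies $F^p\cC^t=\cC^t$ for $p\leq t-d$ and $F^p\cC^t=0$ for $p>t$. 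Thus on every total degree this filtration is bounded (in particular exhaustive and Hausdorff), and its associated spectral sequence is exactly $\bE$: the zeroth page is $\cC^{i,j}$ with $\dd_0=\bbpd$ and $\dd_1$ induced by $\md$, as in Figure \ref{diag3}.

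Granting this, convergence is the standard theorem for the spectral sequence of a bounded filtration: $\bE$ converges, with $\bE_\infty^{i,j}\simeq_{\O(X)}\mathrm{gr}^i_F\,\rH^{i+j}(\cC^\bullet)$, the associated graded of the cohomology of the total complex for the induced (finite) filtration. This settles the first assertion. For the second, I would pin down the collapse page by a bidegree count: $\dd_r$ has bidegree $(r,1-r)$, so it carries a subquotient of $\cC^{i,j}$ to one of $\cC^{i+r,\,j-r+1}$, and for both endpoints to be nonzero one needs $0\leq j\leq d$ and $0\leq j-r+1\leq d$, forcing $r-1\leq j\leq d$ and hence $r\leq d+1$. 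Therefore $\dd_r=0$ for all $r\geq d+2$, so $\bE_{d+2}=\bE_\infty$.

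Putting the two halves together, $\bigoplus_{i+j=t}\bE_{d+2}^{i,j}=\bigoplus_{i+j=t}\bE_\infty^{i,j}$ is the associated graded of the finite filtration on $\rH^t(\cC^\bullet)$, and \eqref{HC} identifies $\rH^t(\cC^\bullet)$ with $\Hom_{\HDF(X,W)}^{\hat t}(a_1,a_2)$; this gives the stated natural $\O(X)$-module isomorphism.

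The part I expect to require the most attention is precisely this convergence bookkeeping: since $\cC^{\bullet,\bullet}$ is horizontally unbounded, no first-quadrant shortcut is available, and one must explicitly exploit the boundedness of the \emph{vertical} range $j\in\{0,\dots,d\}$ to conclude both that the induced filtration on each fixed total degree is finite (so that the convergence theorem applies at all) and that $\dd_r$ vanishes for $r\geq d+2$. A secondary point to be explicit about is that $\bigoplus_{i+j=t}\bE_{d+2}^{i,j}$ arises as the associated graded of that finite filtration on $\Hom_{\HDF(X,W)}^{\hat t}(a_1,a_2)$ — which is the precise sense in which the displayed isomorphism is to be read.
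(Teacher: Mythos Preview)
Your argument is correct and follows essentially the same route as the paper: both identify $\bE$ with the spectral sequence of the horizontal filtration, observe that vertical boundedness ($j\in\{0,\dots,d\}$) makes the induced filtration on each $\cC^t$ finite, invoke the standard convergence theorem, and then use \eqref{HC}. Your treatment is in fact more explicit than the paper's on two points---the bidegree count that forces $\dd_r=0$ for $r\geq d+2$, and the observation that the displayed isomorphism is properly an identification with the associated graded of the induced filtration on $\Hom_{\HDF(X,W)}^{\hat t}(a_1,a_2)$.
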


\begin{proof}
The spectral sequence $\bE$ is given by the following
(horizontal)
filtration $F^p$ of the bicomplex $\cC^{\bullet,\bullet}$:
\be
F^p:\cC^{\bullet,\bullet}\rightarrow \cC^{\bullet,\bullet}~,~~ \cC^{i,j}\mapsto 
\begin{cases} \cC^{i,j}~, ~~ \text{if} ~~ i\geq p ~,\\
 0~~, ~~~~ \text{otherwise} ~~.\end{cases}
\ee
This is an infinite filtration on the horizontally unbounded complex,
but the induced filtration $F^p(\cC^\bullet)$ on the total complex is
finite.  Indeed, we have $F^{k-d}(\cC^{k})=\cC^k$ and
$F^{k+1}(\cC^k)=0$ for any $k$ since $\cC^{i,j}$ vanishes for $j<0$ or
$j>d$. This ensures that the spectral sequence converges to the
total cohomology $\rH(\cC^{\bullet})$ 
of the 2-periodic complex $\cC^{\bullet}$
(see \cite[$\SSS$14]{BoTu}). Moreover, it
degenerates at page $d+2$. Thus:
\be
\bigoplus_{i+j=t}{\bE}_\infty^{i,j}=\bigoplus_{i+j=t}{\bE}_{d+2}^{i,j}=\rH^t(\cC^\bullet)~~.
\ee
Combining this with \eqref{HC} gives the conclusion. ~\qed
\end{proof}

\smallskip

\begin{remark}
For a pair of holomorphic factorizations  $(a_1,a_2)$ the limit of the spectral sequence 
$\bE$ defined above looks like the "hypercohomology" of the following  2-periodic
complex of locally-free sheaves:
\ben
\label{DefComplex}
(\cQ_{a_1,a_2}):~ ~\ldots \rightarrow Hom^\1(E_1, E_2)\stackrel{\md}{\rightarrow}
 Hom^\0(E_1, E_2) \stackrel{\md}{\rightarrow} Hom^\1(E_1,E_2)\rightarrow \ldots~~,
\een
where $Hom^\0(E_1,E_2)$ sits in even positions.  Indeed, the columns give (2-periodic) 
bounded acyclic (Dolbeault) resolutions of nodes $\Hom^\0(E_1,E_2)$ and $\Hom^\1(E_1,E_2)$, 
while $\Hom_{\HDF(X,W)}(a_1,a_2)$ is isomorphic to the total cohomology of 
such bicomplex. However, the notion of hypercohomology for unbounded complexes 
is ambiguous (see \cite{Wei}).
\end{remark}

\subsection{An analytic model of $\HDF(X,W)$ when $X$ is Stein}

Recall the category $\HF(X,W)$ defined in Subsection \ref{subsec:DF}.

\

\begin{Lemma}
\label{lemma:BoundaryStein}
Suppose that $X$ is Stein. Then the spectral sequence $\bE$ defined in (\ref{ss_bndr}) 
degenerates at $\bE_2$ and we have an isomorphism of $\Z_2$-graded
$\O(X)$-modules:
\ben
\label{HFStein}
\Hom_{\HDF(X,W)}(a_1,a_2)\simeq_{\O(X)} \Hom_{\HF(X,W)}(a_1,a_2)~~.
\een
\end{Lemma}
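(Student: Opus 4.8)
The plan is to exploit the fact that $X$ is Stein together with Cartan's theorem B in exactly the way done for the bulk algebra in Theorem \ref{thm:BulkDeg}, only now applied to the bicomplex $(\cC^{i,j},\md,\bbpd)$ of Definition \ref{Def_unwin} rather than to the Koszul bicomplex. First I would take the spectral sequence $\bE$ associated to the horizontal filtration (the one whose zeroth page is $\bE_0^{i,j}=\cC^{i,j}=\cA^j(X,Hom^{\hat i}(E_1,E_2))$ with vertical differential $\dd_0=\bbpd$), as already set up before Proposition \ref{prop:Boundary}. Since $Hom^{\hat i}(E_1,E_2)$ is a holomorphic vector bundle on the Stein manifold $X$, Cartan's theorem B (Theorem \ref{thm:B}) gives
\be
\bE_1^{i,j}=\rH^j_{\bbpd}\bigl(\cA(X,Hom^{\hat i}(E_1,E_2))\bigr)=0\quad\text{for all }j>0,
\ee
and for $j=0$ the Dolbeault cohomology computes global holomorphic sections, so $\bE_1^{i,0}=\rGamma(X,Hom^{\hat i}(E_1,E_2))=\Hom^{\hat i}_{\F(X,W)}(a_1,a_2)$. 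Hence only the bottom row of $\bE_1$ survives, and its horizontal differential $\dd_1$ is the induced map, which is precisely the defect differential $\md_{a_1,a_2}$ on $\rGamma(X,Hom(E_1,E_2))$, i.e. the differential of the dg-category $\F(X,W)$.

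Next I would argue that the sequence collapses at $\bE_2$: the differentials $\dd_r$ for $r\geq 2$ move between different rows of the page, but only the $j=0$ row is nonzero, so $\dd_r=0$ for $r\geq 2$. Therefore $\bE_\infty^{i,0}=\bE_2^{i,0}=\rH^i_{\md}\bigl(\rGamma(X,Hom^{\bullet}(E_1,E_2))\bigr)$, which by the definition of $\F(X,W)$ is exactly $\Hom^{\hat i}_{\HF(X,W)}(a_1,a_2)$, while $\bE_\infty^{i,j}=0$ for $j\neq 0$. Since the spectral sequence converges to $\rH(\cC^\bullet)=\Hom_{\HDF(X,W)}(a_1,a_2)$ by Proposition \ref{prop:Boundary}, and the limit is concentrated in a single row, the associated graded of the filtration on $\rH^{\hat t}(\cC)$ has one nonzero piece, giving the desired isomorphism \eqref{HFStein} of $\Z_2$-graded $\O(X)$-modules (keeping track of the $2$-periodicity of $\cC^\bullet$, which matches the $\Z_2$-grading on both sides).

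\textbf{Main obstacle.} The conceptual steps are all direct analogues of Theorem \ref{thm:BulkDeg}, so no step is genuinely hard; the point requiring the most care is \emph{convergence and the identification of the abutment}. Unlike the Koszul bicomplex, $\cC^{\bullet,\bullet}$ is horizontally unbounded (it is $2$-periodic in the $i$-direction), so one must use that the induced filtration $F^p(\cC^k)$ on the \emph{total} complex is finite — $F^{k-d}(\cC^k)=\cC^k$ and $F^{k+1}(\cC^k)=0$ because $\cC^{i,j}=0$ unless $0\le j\le d$ — exactly as invoked in the proof of Proposition \ref{prop:Boundary}, to guarantee that the spectral sequence converges to $\rH(\cC^\bullet)$. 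The second delicate point is bookkeeping the grading: the isomorphism must be compatible with the total $\Z_2$-grading, so one checks that under $\dd_1=\md$ the mod $2$ reduction of $i$ together with the vanishing of the $j>0$ rows makes $\rH^{\hat t}(\cC)\simeq \bigoplus_{i\equiv t\,(2)}\bE_\infty^{i,0}$ collapse to the single summand $\Hom^{\hat t}_{\HF(X,W)}(a_1,a_2)$.
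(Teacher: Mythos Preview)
Your proposal is correct and follows essentially the same approach as the paper's own proof: apply Cartan's theorem B to kill the $j>0$ rows of $\bE_1$, identify the surviving bottom row with $(\rGamma(X,Hom^{\hat\bullet}(E_1,E_2)),\md)$, observe that $\dd_r$ for $r\ge 2$ vanish because they map between rows, and invoke the convergence established in Proposition \ref{prop:Boundary}. Your explicit discussion of the convergence subtlety for the horizontally unbounded bicomplex is exactly the point the paper delegates to that proposition.
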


\begin{proof}
The first page of the spectral sequence is given by:
\be
\bE_1^{i,j} \eqdef \rH(\bE_0^{i,j},\bbpd)=\rH(\cA^j(X, Hom^{\hat i}(E_1,E_2)),
\bbpd)=\rH^j_{\bbpd}(Hom^{\hat i}(E_1,E_2))~~.
\ee 
Since $X$ is Stein, Cartan's Theorem B yields:
\be
\bE_1^{i,j} = 0 ~~\mathrm{for}~j>0~\text{and}~~i\in \Z~~.
\ee
Hence all differentials ${\dd}_2$ of the next page $\bE_2$
must vanish, since they are maps between different rows. 
Thus the spectral sequence degenerates at $\bE_2$ and we have:
\ben
\label{HFSp}
\Hom_{\HDF(X,W)}^{\hat t}(a_1,a_2)\simeq_{\O(X)}\bigoplus_{i+j=t} {\bE}_2^{i,j}=
~\bigoplus_{i+j=t} \rH(\bE_1^{i,j},{\dd}_1)~,~~\forall t\in \Z~~.
\een
The only nonzero row of $\bE_1$ is the bottom row $\bE_1^{\bullet,0}$,
which is a 2-periodic sequence with nodes $\bE_1^{i,0}=\rH^0_{\bbpd}(Hom^{\hat
  i}(E_1,E_2))=\rGamma(X,Hom^{\hat i}(E_1,E_2))$ and differential
${\dd}_1\eqdef\md$:
\be
\label{d48}
\ldots \rightarrow{\bE}^{i-1,0}_1
\mathrel{\mathop{\longrightarrow}^{\mathrm{\md}}} {\bE}^{i,0}_1
\mathrel{\mathop{\longrightarrow}^{\mathrm{\md}}} {\bE}^{i+1,0}_1
\rightarrow \ldots
\ee
The cohomology of this sequence at node $i$ equals
$\rH_{\md}(\rGamma(X,Hom^{\hat i}(E_1,E_2)))=\Hom_{\HF(X,W)}^{\hat
  i}(a_1,a_2)$. Hence \eqref{HFSp} reduces to:
\be
\Hom_{\HDF(X,W)}^{\hat t}(a_1,a_2)\simeq_{\O(X)}\rH(\bE_1^{t,0},\md)
=\rH_{\md}^{\hat t}(\rGamma(X,Hom(E_1,E_2)))=\Hom_{\HF(X,W)}^{\hat
  t}(a_1,a_2)~~,
\ee
for all $t\in \Z$, which gives \eqref{HFStein}.~\qed
\end{proof}

\

\begin{Theorem}
\label{thm:BoundaryStein}
Suppose that $X$ is Stein. Then $\HDF(X,W)$ and $\HF(X,W)$ are
equivalent as $\Z_2$-graded $\O(X)$-linear categories.
\end{Theorem}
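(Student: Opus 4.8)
The plan is to promote the Hom-module isomorphism of Lemma~\ref{lemma:BoundaryStein} to an equivalence of categories by realising it through an explicit dg-functor and then checking that this functor is fully faithful on cohomology.

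First I would write down the dg-functor $\iota\colon \F(X,W)\to \DF(X,W)$. Both categories have the same objects, so $\iota$ is the identity on objects. On morphisms, every holomorphic section $f\in \rGamma(X,Hom(E_1,E_2))=\Hom_{\F(X,W)}(a_1,a_2)$ is in particular a smooth section of $Hom(E_1,E_2)$, hence lies in $\cA^0(X,Hom(E_1,E_2))\subset \cA(X,Hom(E_1,E_2))=\Hom_{\DF(X,W)}(a_1,a_2)$; this inclusion is taken to be $\iota$ on Hom-modules. I would then check that $\iota$ is a morphism of $\Z_2$-graded $\O(X)$-linear dg-categories: it is $\O(X)$-linear and injective; it preserves the $\Z_2$-grading, since a rank-$0$ bundle-valued form carries total $\Z_2$-degree equal to its $\Z_2$-degree as a section of $Hom(E_1,E_2)$; it intertwines the differentials, because $\bbpd_{a_1,a_2}$ kills holomorphic sections, so $\updelta_{a_1,a_2}f=(\bbpd_{a_1,a_2}+\md_{a_1,a_2})f=\md_{a_1,a_2}f$ for $f$ holomorphic, while $\md_{a_1,a_2}f=D_2\circ f-(-1)^{\kappa}f\circ D_1$ is again holomorphic since $D_1,D_2$ are; it respects composition, since the wedge product of rank-$0$ bundle-valued forms is composition of sections; and it sends identity morphisms to identity morphisms. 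Equivalently, $\iota$ identifies the $\O(X)$-linear cochain complex $(\rGamma(X,Hom(E_1,E_2)),\md_{a_1,a_2})$ with the subcomplex of holomorphic sections inside the total complex $(\cA(X,Hom(E_1,E_2)),\updelta_{a_1,a_2})$ of the bicomplex $\cC^{\bullet,\bullet}$ of Definition~\ref{Def_unwin}. Passing to cohomology, $\iota$ induces a functor $\rH(\iota)\colon \HF(X,W)\to \HDF(X,W)$ of $\Z_2$-graded $\O(X)$-linear categories which is the identity on objects and is automatically compatible with compositions.

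It then remains to show that $\rH(\iota)$ is fully faithful, i.e. that for all holomorphic factorizations $a_1,a_2$ of $W$ the induced $\O(X)$-linear map
\[
\rH(\iota)\colon \Hom_{\HF(X,W)}(a_1,a_2)\longrightarrow \Hom_{\HDF(X,W)}(a_1,a_2)
\]
is an isomorphism of $\Z_2$-graded $\O(X)$-modules. This is where Stein-ness enters, through Cartan's Theorem~B (Theorem~\ref{thm:B}). I would observe that $\iota$ is compatible with the horizontal filtration $F^p$ used in the proof of Lemma~\ref{lemma:BoundaryStein}: on the target side this filtration produces the spectral sequence $\bE$ whose page $\bE_1$ is, by Cartan's Theorem~B, concentrated in the row $j=0$ with nodes $\bE_1^{i,0}=\rGamma(X,Hom^{\hat i}(E_1,E_2))$ and differential $\dd_1=\md_{a_1,a_2}$; on the source side the same filtration has associated graded equal to this very row, with $\dd_0=0$ and $\dd_1=\md_{a_1,a_2}$. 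Thus $\iota$ induces an isomorphism on $\bE_1$-pages, hence on all later pages and on $\bE_\infty$; since both spectral sequences converge to the respective total cohomologies (for the target this is part of Proposition~\ref{prop:Boundary} and Lemma~\ref{lemma:BoundaryStein}; for the source the filtration is bounded in each total degree modulo $2$-periodicity), the comparison theorem for spectral sequences of filtered complexes gives that $\rH(\iota)$ is an isomorphism on each Hom-module. Concretely, this amounts to the zig-zag statement that, because $\bbpd$-cohomology vanishes in positive form-degree, every $\updelta_{a_1,a_2}$-cocycle is $\updelta_{a_1,a_2}$-cohomologous to a holomorphic section and a holomorphic section which is $\updelta_{a_1,a_2}$-exact is already $\md_{a_1,a_2}$-exact through a holomorphic section. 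Finally, a functor that is the identity on objects and fully faithful is an isomorphism of categories, in particular an equivalence, which is the assertion of the theorem.

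I expect the main obstacle to be precisely the bookkeeping in the previous paragraph: verifying that the abstract isomorphism furnished by the collapsed spectral sequence of Lemma~\ref{lemma:BoundaryStein} is genuinely the one induced by the honest dg-functor $\iota$ — that is, that the edge homomorphism of the degenerate spectral sequence is realised by inclusion of the subcomplex of holomorphic sections — while keeping careful track of the signs in $\md_{a_1,a_2}$ and of the $2$-periodicity of $\cC^\bullet$ when invoking convergence and the comparison theorem.
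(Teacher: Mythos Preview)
Your proposal is correct and follows essentially the same approach as the paper, which simply asserts in one line that the isomorphism of Lemma~\ref{lemma:BoundaryStein} is natural in $a_1,a_2$ and preserves units. Your construction of the explicit dg-functor $\iota$ and the spectral-sequence comparison argument make precise exactly what the paper leaves implicit in the word ``natural'', and your identification of the edge-map issue as the only delicate point is apt.
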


\begin{proof}
It is easy to see that the isomorphism of Lemma
\ref{lemma:BoundaryStein} is natural with respect to $a_1$ and $a_2$
and that it preserves units. ~\qed
\end{proof}

\subsection{Relation to projective analytic factorizations in the Stein case}

Recall that $\O(X)=\cO_X(X)$ denotes the commutative ring of
complex-valued holomorphic functions defined on $X$.

\

\begin{Definition}
An {\em $\O(X)$-supermodule} is a $\Z_2$-graded
$\O(X)$-module $M$ endowed with a direct sum decomposition 
$M=M^\0\oplus M^\1$ into submodules. 
\end{Definition}

\

\noindent Notice that $\O(X)$-supermodules form an $\O(X)$-linear
$\Z_2$-graded category $\Mod^s_{\O(X)}$ if we define the Hom space
$\Hom_{\O(X)}(M_1,M_2)$ from a supermodule $M_1$ to a supermodule
$M_2$ to be the $\Z_2$-graded $\O(X)$-module with homogeneous
components:
\beqan
\label{ModGrading}
& & \Hom_{\O(X)}^\0(M_1,M_2)\eqdef \Hom_{\O(X)}(M_1^\0,M_2^\0)\oplus \Hom_{\O(X)}(M_1^\1,M_2^\1)~~,\nn\\ 
& & \Hom_{\O(X)}^\1(M_1,M_2)\eqdef \Hom_{\O(X)}(M_1^\0,M_2^\1)\oplus \Hom_{\O(X)}(M_1^\1,M_2^\0)~~.
\eeqan
The composition is defined in the obvious manner. Given an
$\O(X)$-supermodule $M$, let
\be
\End_{\O(X)}(M)\eqdef \Hom_{\O(X)}(M,M)~.
\ee

\begin{Definition}
An $\O(X)$-supermodule $M=M^\0\oplus M^\1$ is called {\em
  finitely-generated} if both of its $\Z_2$-homogeneous components
$M^\0$ and $M^\1$ are finitely-generated over $\O(X)$. It is
called {\em projective} if both $M^\0$ and $M^\1$ are projective
$\O(X)$-modules.
\end{Definition}

\

\noindent Finitely-generated $\O(X)$-supermodules form a full
$\Z_2$-graded $\O(X)$-linear subcategory $\mod_{\O(X)}^s$ of
$\Mod_{\O(X)}^s$, while projective and finitely-generated
$\O(X)$-supermodules form a full $\Z_2$-graded $\O(X)$-linear
subcategory $\proj^{s}_{\O(X)}$ of $\mod^s_{\O(X)}$.

\

\begin{Definition}
A {\em projective analytic factorization} of $W$ is a pair $(P,D)$,
where $P$ is a finitely-generated projective $\O(X)$-supermodule and
$D\in \End_{\O(X)}^\1(P)$ is an odd endomorphism of $P$ such that
$D^2=W\id_P$.
\end{Definition}

\

\begin{Definition}
The {\em dg-category $\PF(X,W)$ of projective analytic factorizations}
of $W$ is the $\Z_2$-graded  $\O(X)$-linear dg-category defined as
follows:
\begin{itemize}
\item The objects are the projective analytic factorizations of $W$.
\item Given two projective analytic factorizations $(P_1,D_1)$ and
  $(P_2,D_2)$ of $W$, we set: 
\be
\Hom_{\PF(X,W)}((P_1,D_1),(P_2,D_2))\eqdef
  \Hom_{\O(X)}(P_1,P_2)~~, 
\ee
endowed with the $\Z_2$-grading \eqref{ModGrading} inherited from
$\mod^s_{\O(X)}$ and with the $\O(X)$-linear odd differential
$\md:=\md_{(P_1,D_1),(P_2,D_2)}$ determined uniquely by the
condition:
\be
\md(f)\eqdef D_2\circ f-(-1)^{\deg f}f\circ D_1 
\ee 
for all elements $f\in \Hom_{\O(X)}(P_1,P_2)$ which have pure
$\Z_2$-degree.
\item The composition of morphisms is inherited from $\mod^s_{\O(X)}$.
\end{itemize}
The {\em cohomological category $\HPF(X,W)$ of analytic projective
  factorizations} of $W$ is the total cohomology category:
\be
\HPF(X,W)\eqdef \rH(\PF(X,W))~~,
\ee
which is a $\Z_2$-graded $\O(X)$-linear category.
\end{Definition}

\

\noindent Let us assume that $X$ is Stein. Then the Serre-Swan theorem
for Stein manifolds (see \cite{ForsterStein,Morye}) states that the functor
$\rGamma_X\eqdef \rGamma(X,\cdot)$ of taking {\em global} holomorphic
sections gives an equivalence of $\O(X)$-linear categories:
\beqa
\rGamma_X:\VB(X)\stackrel{\sim}{\rightarrow} \proj_{\O(X)}~~,
\eeqa
where $\proj_{\O(X)}$ is the category of finitely-generated projective
$\O(X)$-modules. This induces a ($\O(X)$-linear, degree zero) 
dg-functor $\mGamma_X:\F(X,W)\rightarrow \PF(X,W)$ which sends a
holomorphic factorization $(E,D)$ of $W$ into the projective
factorization $\mGamma_X(E,D)\eqdef (\rGamma(X,E),D)$, where 
$D\in \rGamma(X,End^\1(E))\simeq_{\O(X)} \End_{\O(X)}^\1(\rGamma(X,E))$. 

\

\noindent The proof of the following statement is immediate:

\

\begin{Proposition}
\label{prop:proj}
Assume that $X$ is Stein. Then the dg-functor $\mGamma_X$ is an
equivalence of $\Z_2$-graded $\O(X)$-linear dg-categories between
$\F(X,W)$ and $\PF(X,W)$. In particular, the $\Z_2$-graded
$\O(X)$-linear cohomological categories $\HF(X,W)$ and $\HPF(X,W)$ are
equivalent.
\end{Proposition}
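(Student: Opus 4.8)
The plan is to deduce the statement directly from the Serre--Swan theorem for Stein manifolds, checking only that the additional structure carried by $\F(X,W)$ and $\PF(X,W)$ -- namely the $\Z_2$-gradings and the twisted differentials $\md$ -- is transported faithfully by $\rGamma_X$.

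First I would record the two inputs supplied by Serre--Swan: the functor $\rGamma_X:\VB(X)\to\proj_{\O(X)}$ is fully faithful and essentially surjective. Full faithfulness says precisely that, for holomorphic vector bundles $E_1,E_2$, the natural $\O(X)$-linear map
\be
\rGamma(X,Hom(E_1,E_2))\to \Hom_{\O(X)}(\rGamma(X,E_1),\rGamma(X,E_2))
\ee
is an isomorphism; since $Hom(E_1,E_2)=Hom^\0(E_1,E_2)\oplus Hom^\1(E_1,E_2)$ and this splitting is respected, it restricts to isomorphisms $\rGamma(X,Hom^\kappa(E_1,E_2))\simeq \Hom_{\O(X)}^\kappa(\rGamma(X,E_1),\rGamma(X,E_2))$ for each $\kappa\in\Z_2$, so $\mGamma_X$ is fully faithful as a functor of $\Z_2$-graded $\O(X)$-linear categories. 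Taking $E_1=E_2=E$, this isomorphism is moreover one of $\O(X)$-algebras, sending $\id_E$ to $\id_{\rGamma(X,E)}$ and (after the evident extension of scalars) $W\,\id_E$ to $W\,\id_{\rGamma(X,E)}$; hence $D^2=W\,\id_E$ holds for a holomorphic section $D\in\rGamma(X,End^\1(E))$ iff the corresponding odd module endomorphism squares to $W\,\id$. This shows $\mGamma_X$ is well defined on objects, and since both differentials $\md$ are given by the identical formula $f\mapsto D_2\circ f-(-1)^{\deg f}f\circ D_1$ while $\mGamma_X$ intertwines the compositions and carries the $D_i$ to the matching endomorphisms, $\mGamma_X$ commutes with $\md$; it clearly preserves units and composition. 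Thus $\mGamma_X$ is a fully faithful dg-functor.

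Next I would check essential surjectivity. Let $(P,D)$ be a projective analytic factorization with $P=P^\0\oplus P^\1$ and each $P^\kappa$ finitely generated projective over $\O(X)$. By essential surjectivity of $\rGamma_X$ there are holomorphic vector bundles $E^\0,E^\1$ with $\rGamma(X,E^\kappa)\simeq P^\kappa$; put $E\eqdef E^\0\oplus E^\1$, so that $\rGamma(X,E)\simeq P$ as $\O(X)$-supermodules. Transporting $D$ through the algebra isomorphism $\rGamma(X,End(E))\simeq\End_{\O(X)}(\rGamma(X,E))$ produces an odd holomorphic section $\widetilde D\in\rGamma(X,End^\1(E))$ with $\widetilde D^2=W\,\id_E$, i.e. a holomorphic factorization $(E,\widetilde D)$ of $W$, and by construction $\mGamma_X(E,\widetilde D)\simeq(P,D)$ in $\PF(X,W)$. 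Hence $\mGamma_X$ is an equivalence of $\Z_2$-graded $\O(X)$-linear dg-categories.

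Finally, passing to total cohomology $\rH(\cdot)$ is a functorial operation sending $\O(X)$-linear dg-functors to $\Z_2$-graded $\O(X)$-linear functors and dg-equivalences to equivalences; applied to $\mGamma_X$ it yields the asserted equivalence $\HF(X,W)\simeq\HPF(X,W)$. The only step deserving attention is the compatibility of the Serre--Swan isomorphism with the $\O(X)$-algebra (endomorphism) structures and with the $\Z_2$-grading, needed so that the factorization condition and the differentials match; this is standard, and everything else is formal -- which is why the proposition is stated as immediate.
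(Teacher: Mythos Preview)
Your proposal is correct and follows exactly the route the paper intends: the paper simply declares the proposition ``immediate'' from the Serre--Swan equivalence $\rGamma_X:\VB(X)\stackrel{\sim}{\to}\proj_{\O(X)}$ set up in the preceding paragraph, and you have faithfully unpacked what that means (compatibility with the $\Z_2$-grading, transport of the factorization condition and of $\md$, and essential surjectivity on objects). There is nothing to add.
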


\

\noindent Theorem \ref{thm:BoundaryStein} and Proposition \ref{prop:proj} imply 
that the categories $\HDF(X,W)$ and $\HPF(X,W)$ are equivalent when $X$ is Stein. 

\subsection{Free holomorphic factorizations and analytic matrix factorizations}

\

\

\begin{Definition}
A holomorphic vector bundle $E$ on $X$ with $\rk_\C E=r$ is called
{\em holomorphically trivial} if it is isomorphic (as a holomorphic
vector bundle) with the trivial holomorphic vector bundle
$\cO_X^{\oplus r}$. A holomorphic vector superbundle $E=E^\0\oplus
E^\1$ on $X$ is called {\em holomorphically trivial} if both its even
and odd sub-bundles $E^\0$ and $E^\1$ are holomorphically trivial.
\end{Definition}

\

\begin{remark}
Two holomorphic vector bundles $E$ and $F$ defined on $X$ are
isomorphic in the category $\VB(X)$ iff they are isomorphic in the
usual category of holomorphic vector bundles defined on $X$. Indeed,
$\VB(X)$ is equivalent with the full subcategory of $\Coh(X)$
consisting of locally-free sheaves of finite rank. Also, an isomorphism
of sheaves between the sheaves of holomorphic sections $\cE$ and $\cF$
of $E$ and $F$ has trivial kernel and image equal to $\cF$, which
means that the corresponding isomorphism in the category $\VB(X)$ is
an ordinary isomorphism of vector bundles (since its kernel and image
are sub-bundles of $E$ and $F$, respectively). In particular, a vector
bundle $E$ is holomorphically trivial iff it is isomorphic with a
trivial vector bundle in the category $\VB(X)$.
\end{remark}

\smallskip

\noindent Let $\VB_\triv(X)$ denote the full subcategory of $\VB(X)$ whose
objects are the holomorphically trivial holomorphic vector bundles
defined on $X$ and $\VB_\triv^s(X)$ denote the full subcategory of
$\VB^s(X)$ whose objects are the holomorphically trivial holomorphic
vector superbundles defined on $X$.

\smallskip

\begin{remark}
Suppose that $X$ is Stein. Then the Oka-Grauert principle
(see \cite{OkaGrauert,Oka}) implies that a holomorphic vector bundle $E$ is
holomorphically trivial iff it is topologically trivial,
i.e. isomorphic with a trivial vector bundle in the category of
complex vector bundles defined on $X$.
\end{remark}

\smallskip

\begin{Definition}
A holomorphic factorization $(E,D)$ of $W$ is called {\em free} if the
holomorphic vector superbundle $E$ is holomorphically trivial.
\end{Definition}

\

\begin{Definition} 
The {\em category $\F_\free(X,W)$ of free holomorphic factorizations
  of $W$} is the full $\Z_2$-graded $\O(X)$-linear dg-subcategory of
the category $\F(X,W)$ whose objects are the free holomorphic
factorizations of $W$. The {\em cohomological category $\HF_\free(X,W)$
  of free holomorphic factorizations of $W$} is the $\Z_2$-graded
$\O(X)$-linear category defined as the total cohomology category of
$\F_\free(X,W)$:
\be
\HF_\free(X,W)\eqdef \rH(\F_\free(X,W))~.
\ee
\end{Definition}

\begin{Definition}
An $\O(X)$ supermodule $M=M^\0\oplus M^\1$ is called {\em free} if
its even and odd submodules $M^\0$ and $M^\1$ are free $\O(X)$-modules. 
\end{Definition}

\

\noindent Let $\free_{\O(X)}$ denote the full subcategory of
$\proj_{\O(X)}$ consisting of those finitely-generated projective $\O(X)$-modules
which are free. Let $\free^s_{\O(X)}$ denote the full subcategory of
$\proj^s_{\O(X)}$ consisting of those finitely-generated projective 
$\O(X)$-supermodules which are free.

\

\begin{Definition}
An {\em analytic matrix factorization} of\, $W$ is a pair $(M,D)$, where
$M$ is a free and finitely-generated projective $\O(X)$-supermodule and
$D\in \End^\1_{\O(X)}(M)$ is an odd endomorphism of $M$ such that
$D^2=W\id_M$.
\end{Definition}

\

\begin{Definition}
The {\em dg-category $\MF(X,W)$ of analytic matrix factorizations} of
$W$ is the full $\Z_2$-graded $\O(X)$-linear dg-subcategory of
$\PF(X,W)$ whose objects are the analytic matrix factorizations of
$W$. The {\em cohomological category $\HMF(X,W)$ of analytic matrix
  factorizations} is the $\Z_2$-graded $\O(X)$-linear category defined
as the total cohomology category of $\MF(X,W)$:
\be
\HMF(X,W)\eqdef \rH(\MF(X,W))~.
\ee
\end{Definition}

\noindent When $X$ is Stein, the Serre-Swan equivalence
$\rGamma_X:\VB(X)\stackrel{\sim}{\rightarrow} \proj_{\O(X)}$ restricts to
an equivalence of $\O(X)$-linear categories between $\VB_\triv(X)$ and
$\free_{\O(X)}$. This implies:

\

\begin{Proposition}
Assume that $X$ is Stein. Then the equivalence of categories
$\mGamma_X:\F(X,W)\rightarrow \PF(X,W)$ restricts to an equivalence of
$\Z_2$-graded $\O(X)$-linear categories between $\F_\free(X,W)$ and
$\MF(X,W)$. In particular, the $\Z_2$-graded $\O(X)$-linear categories
$\HF_\free(X,W)$ and $\HMF(X,W)$ are equivalent.
\end{Proposition}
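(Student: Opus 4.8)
The plan is to read the statement off from Proposition~\ref{prop:proj}, which already provides the equivalence of $\Z_2$-graded $\O(X)$-linear dg-categories $\mGamma_X:\F(X,W)\to\PF(X,W)$, by checking that this equivalence is compatible with the two relevant full dg-subcategories. Since $\F_\free(X,W)$ is by definition the full dg-subcategory of $\F(X,W)$ on the free holomorphic factorizations and $\MF(X,W)$ is the full dg-subcategory of $\PF(X,W)$ on the analytic matrix factorizations, and since $\mGamma_X$ is an isomorphism on all Hom-complexes, the restriction of $\mGamma_X$ to $\F_\free(X,W)$ is automatically a fully faithful dg-functor; what remains is to show that it lands in $\MF(X,W)$ and is essentially surjective onto $\MF(X,W)$. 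The closing assertion about cohomological categories then follows formally: the total cohomology functor $\rH(-)$ turns an isomorphism of $\Z_2$-graded Hom-complexes into an isomorphism of the corresponding $\Z_2$-graded $\O(X)$-modules and leaves object sets unchanged, so it carries an equivalence of dg-categories to an equivalence of $\Z_2$-graded $\O(X)$-linear categories; applied to $\mGamma_X|_{\F_\free(X,W)}$ this yields $\HF_\free(X,W)\simeq\HMF(X,W)$.

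For the first check I would use the restriction of the Serre-Swan equivalence to $\rGamma_X:\VB_\triv(X)\stackrel{\sim}{\rightarrow}\free_{\O(X)}$ recalled just above the statement. If $(E,D)$ is a free holomorphic factorization, with $E=E^\0\oplus E^\1$ and both $E^\0,E^\1$ holomorphically trivial, then $\rGamma(X,E^\0)$ and $\rGamma(X,E^\1)$ are free and finitely generated over $\O(X)$, hence $\rGamma(X,E)=\rGamma(X,E^\0)\oplus\rGamma(X,E^\1)$ is a free finitely-generated $\O(X)$-supermodule and $\mGamma_X(E,D)=(\rGamma(X,E),D)$ is an analytic matrix factorization. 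For essential surjectivity, let $(M,D)$ with $M=M^\0\oplus M^\1$ be an analytic matrix factorization. Using the Serre-Swan equivalence I would choose holomorphic vector bundles $E^\0,E^\1$ with $\rGamma(X,E^\0)\simeq_{\O(X)}M^\0$ and $\rGamma(X,E^\1)\simeq_{\O(X)}M^\1$; transporting $D$ through the identification $\rGamma(X,End(E))\simeq_{\O(X)}\End_{\O(X)}(\rGamma(X,E))$ yields an odd holomorphic section $D'\in\rGamma(X,End^\1(E))$ with $(D')^2=W\id_E$, so that $(E,D')\in\F(X,W)$ and $\mGamma_X(E,D')\simeq(M,D)$ in $\PF(X,W)$. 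Since $M^\0$ and $M^\1$ are free, the restricted equivalence $\VB_\triv(X)\stackrel{\sim}{\rightarrow}\free_{\O(X)}$ forces $E^\0$ and $E^\1$ to be holomorphically trivial, so $(E,D')$ is in fact a free holomorphic factorization; thus every object of $\MF(X,W)$ is isomorphic to one in the image of $\mGamma_X|_{\F_\free(X,W)}$.

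I do not anticipate a genuine obstacle here: this is the routine argument for restricting an equivalence to a matching pair of full subcategories. The one step that demands a moment's care is the essential-surjectivity part, where one must pull freeness of the supermodule of global sections back to holomorphic triviality of the superbundle; this works precisely because the Serre-Swan correspondence for Stein manifolds is an honest equivalence of categories, so it \emph{reflects} holomorphic triviality rather than merely preserving it (equivalently, because it restricts to an equivalence onto $\free_{\O(X)}$).
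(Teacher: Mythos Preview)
Your proposal is correct and follows exactly the approach the paper intends: the paper simply notes that the Serre-Swan equivalence $\rGamma_X:\VB(X)\stackrel{\sim}{\rightarrow}\proj_{\O(X)}$ restricts to an equivalence between $\VB_\triv(X)$ and $\free_{\O(X)}$ and declares the proposition an immediate consequence, without writing out the details. Your argument is a careful unpacking of precisely that implication (full faithfulness from fullness of the subcategories, essential surjectivity from the restricted Serre-Swan equivalence, and the cohomological statement by applying $\rH$), so there is no substantive difference.
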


\

\begin{Corollary}
\label{cor:MF}
Assume that $X$ is Stein and that any holomorphic vector bundle
defined on $X$ is topologically trivial. Then $\F(X,W)=\F_\free(X,W)$
and the $\Z_2$-graded $\O(X)$-linear dg-categories $\F(X,W)$ and
$\MF(X,W)$ are equivalent. In particular, the $\Z_2$-graded
$\O(X)$-linear categories $\HF(X,W)$ and $\HMF(X,W)$ are equivalent.
\end{Corollary}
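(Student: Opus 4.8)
The plan is to reduce the statement to two facts already recorded above: the Oka--Grauert principle for Stein manifolds and the Serre--Swan-type equivalence $\mGamma_X$. First I would exploit the hypothesis: since $X$ is Stein, the Oka--Grauert principle (see the remark preceding this corollary and \cite{OkaGrauert,Oka}) says that a holomorphic vector bundle on $X$ is holomorphically trivial iff it is topologically trivial; combined with the standing assumption that every holomorphic vector bundle on $X$ is topologically trivial, this forces every holomorphic vector bundle on $X$ to be holomorphically trivial. Applying this to each homogeneous summand of a holomorphic vector superbundle $E=E^\0\oplus E^\1$, I conclude that every holomorphic vector superbundle on $X$ is holomorphically trivial, hence that every holomorphic factorization $(E,D)$ of $W$ is free.

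Next I would identify the two dg-categories. By definition $\F_\free(X,W)$ is the \emph{full} $\Z_2$-graded $\O(X)$-linear dg-subcategory of $\F(X,W)$ supported on the free holomorphic factorizations; by the previous step these are all the objects of $\F(X,W)$, so — fullness making the morphism complexes, differentials and compositions coincide — one gets the honest equality $\F(X,W)=\F_\free(X,W)$ of $\Z_2$-graded $\O(X)$-linear dg-categories. I would then invoke the Proposition immediately preceding this corollary (the restriction of the Serre--Swan equivalence $\mGamma_X$, \cite{ForsterStein,Morye}), which gives an equivalence of $\Z_2$-graded $\O(X)$-linear dg-categories between $\F_\free(X,W)$ and $\MF(X,W)$; composing with the equality just established yields an equivalence $\F(X,W)\simeq\MF(X,W)$. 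Finally, since passing to total cohomology is functorial on dg-categories and sends equivalences to equivalences, $\HF(X,W)=\rH(\F(X,W))$ and $\HMF(X,W)=\rH(\MF(X,W))$ are equivalent as $\Z_2$-graded $\O(X)$-linear categories.

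I do not expect a genuine obstacle here: each ingredient is either a cited structural theorem (Oka--Grauert, Serre--Swan) or bookkeeping. The only point deserving a line of care is the claim that $\F(X,W)=\F_\free(X,W)$ is a strict equality of dg-categories rather than a mere equivalence — this is exactly what the fullness of the subcategory combined with the Oka--Grauert input delivers — together with the (standard) remark that an equivalence of dg-categories descends to an equivalence of cohomology categories, so that the final clause on $\HF(X,W)$ and $\HMF(X,W)$ follows formally. ~\qed
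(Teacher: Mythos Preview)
Your proof is correct and follows essentially the same approach as the paper: use the Oka--Grauert principle to force every holomorphic vector bundle on $X$ to be holomorphically trivial, conclude $\F(X,W)=\F_\free(X,W)$, and then invoke the preceding Proposition identifying $\F_\free(X,W)$ with $\MF(X,W)$ via $\mGamma_X$. Your write-up is simply more explicit than the paper's, which dispatches the last steps with ``the remaining statements follow immediately from the results above.''
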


\begin{proof}
By the Oka-Grauert principle (see \cite{OkaGrauert,Oka}), topological
triviality of a holomorphic vector bundle $E$ implies holomorphic
triviality of $E$. Thus the hypothesis implies
$\F(X,W)=\F_\free(X,W)$. The remaining statements follow immediately
from the results above.~\qed
\end{proof}

\smallskip

\begin{remark}
In general, the category $\HF(X,W)$ has many more objects than the
category $\HF_\free(X,W)$, since a generic Calabi-Yau Stein manifold
$X$ has many holomorphic vector bundles which are not topologically
trivial. 
\end{remark}

\subsection{Topologically non-trivial elementary holomorphic factorizations}
\label{subsec:nontriv}

\

\

\begin{Definition}
Let $(X,W)$ be an LG pair where $X$ is a Stein manifold. 
A holomorphic factorization $(E,D)$ of $W$ is called {\em elementary} 
if $\rk E^\0=\rk E^\1=1$. 
\end{Definition}

\

\noindent Topologically non-trivial elementary holomorphic
factorizations can be constructed as follows on any Calabi-Yau Stein
manifold whose second cohomology with integer coefficients is
non-trivial. Let $X$ be a Calabi-Yau Stein manifold with
$\rH^2(X,\Z)\neq 0$. Then $X$ admits topologically non-trivial complex
line bundles. By the Oka-Grauert principle (see \cite{OkaGrauert}), it
follows that $X$ also admits nontrivial holomorphic line bundles.  By
the same principle, any smooth section of a holomorphic line bundle
can be deformed to a holomorphic section; in particular, any
holomorphic line bundle on $X$ admits nontrivial holomorphic sections.
Given a non-trivial holomorphic line bundle $L$ on $X$, let $v$ be any
nontrivial holomorphic section of $L$ and $u$ be any nontrivial
holomorphic section of the dual line bundle $L^{-1}$. Let $E\eqdef
\cO_X\oplus L$ (viewed as a $\Z_2$-graded holomorphic vector bundle
with even and odd components given respectively by $\cO_X$ and $L$)
and let $D=\left[\begin{array}{cc} 0 &v\\ u & 0\end{array}\right]$ be
the odd section of the $\Z_2$-graded holomorphic vector bundle
$Hom(E,E)\simeq E^\vee \otimes E$ whose component from $\cO_X$ to $L$
is given by $u$ and whose component from $L$ to $\cO_X$ is given by
$v$. Then the tensor product $u\otimes v\simeq v\otimes
u\in\rH^0(L\otimes L^{-1})$ can be identified with a holomorphic
function $W$ defined on $X$ (which is not identically zero) using any
isomorphism between $L\otimes L^{-1}$ and the trivial holomorphic line
bundle $\cO_X$.  Thus $(E,D)$ is a holomorphic factorization of $W$
whose underlying $\Z_2$-graded holomorphic vector bundle $E$ is
non-trivial. This implies that the projective analytic factorization
$(P,D)$ (where $P\eqdef \rH^0(E)$) which corresponds to $(E,D)$ by
Proposition \ref{prop:proj} is not free. Notice that the holomorphic
function $W$ has isolated critical points when the sections $u$
and $v$ are generic.

\

\begin{remark}
Let $\Pic_\an(X)$ denote the analytic Picard group of holomorphic line
bundles on any Stein manifold $X$. Then the assignment $L \to c_1(L)$
induces an isomorphism $\Pic_\an (X) \to \rH^2(X,\Z)$ (see \cite[Lemma
2.5]{picard}).  When $X$ is the analytic space associated to an
algebraic variety, the natural map $\Pic_\alg (X) \to \rH^2(X,\Z)$
from the algebraic Picard group need not be isomorphism. For example,
consider the case when $X$ is the analytic space associated to a
non-complete and non-singular irreducible complex curve of genus
$g$. By  \cite[Corollary 1.3]{picard}, the analytic Picard group
always vanishes for such curves while the algebraic Picard group
vanishes iff $g=0$.
\end{remark}

\

\section{Analytic models for the cohomological disk algebra $\HPV(X,a)$}
\label{sec:disk}

\noindent Let $(X,W)$ be a Landau-Ginzburg pair of dimension $d$.  
Fix a holomorphic factorization $a=(E,D)$ of $W$ and set
$\bbpd_a:=\bbpd_{a,a}= \bbpd_{End(E)}$, $\md_a:=\md_{a,a}= [D,\cdot]$
and $\updelta_a:=\updelta_{a,a}=\bbpd_a+\md_a$. Recall from Subsection 
\ref{subsec:DiskAlgebra} that we have: 
\be
\Delta_a=\bbpd+\vartheta_a~~,
\ee
where $\bbpd:=\bbpd_{\wedge TX\otimes End(E)}$ and
\be
\vartheta_a\eqdef \ioda_W+\md_a~.
\ee
Notice that $(\vartheta_a)^2=0$ and that $\vartheta_a$ anticommutes
with $\bbpd$. Consider the 2-periodic complex of holomorphic vector bundles: 
\be
(\cP_a): \ldots \stackrel{\vartheta_a}{\longrightarrow} \bigoplus_{s+t=k}\wedge^{|t|} TX\otimes End^{\hat s}(E)\stackrel{\vartheta_a}{\longrightarrow} 
\bigoplus_{s+t=k+1}\wedge^{|t|} TX\otimes End^{\hat s}(E)\stackrel{\vartheta_a}{\longrightarrow} \ldots 
\ee
and the 2-periodic sequence of projective
$\O(X)$-modules:
\ben
\label{H0seq_disk}
(\bP_a):~~\ldots \stackrel{\vartheta_a}{\longrightarrow} \rH^0\big(\,\bigoplus_{s+t=k}\wedge^{|t|} TX\otimes End^{\hat s}(E)\big)\stackrel{\vartheta_a}{\longrightarrow} 
\rH^0\big(\,\bigoplus_{s+t=k+1}\wedge^{|t|} TX\otimes End^{\hat s}(E)\big)\stackrel{\vartheta_a}{\longrightarrow} \ldots ~~,
\een
where $\cP_a^k\eqdef \bigoplus_{s+t=k}\wedge^{|t|} TX\otimes End^{\hat
  s}(E)$ and $\bP_a^k\eqdef \rH^0(\cP_a^k)$ sit in position $k$ and $s\in \Z$, $t\in \Z_{\leq 0}$.

\

\begin{Proposition}
Suppose that $X$ is Stein. Then for each $k$, the $\O(X)$-module
$\HPV^{\hat k}(X,a)$ is naturally isomorphic with the cohomology at
position $k$ of the sequence of projective $\O(X)$-modules \eqref{H0seq_disk}.
\end{Proposition}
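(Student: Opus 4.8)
The plan is to adapt, essentially verbatim, the spectral-sequence argument of Theorem \ref{thm:BulkDeg} and Lemma \ref{lemma:BoundaryStein} to the bicomplex underlying the disk dg-algebra $(\PV(X,End(E)),\Delta_a)$, using $\Delta_a=\bbpd+\vartheta_a$ with $\vartheta_a=\ioda_W+\md_a$ (which squares to zero and anticommutes with $\bbpd$, as noted above) together with Cartan's Theorem B.

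First I would organize $\PV(X,End(E))=\cA(X,\wedge TX\otimes End(E))$ as a $\Z\times\Z$-graded bicomplex $\cC^{\bullet,\bullet}$ of sheaves, in the spirit of the unwinding of Definition \ref{Def_unwin}: the vertical index $j\in\{0,\ldots,d\}$ is the Dolbeault form-degree, while the horizontal index is $k=s+t$, where $t\le 0$ is the polyvector degree and $s$ is the $\End$-parity \emph{unwound} to an integer. The node $(k,j)$ is then $\cA^j\otimes\cP_a^k$ with $\cP_a^k=\bigoplus_{s+t=k}\wedge^{|t|}TX\otimes End^{\hat s}(E)$; this direct sum is finite because $\wedge^{|t|}TX=0$ unless $0\le|t|\le d$, and it is $2$-periodic in $k$ because $End^{\hat{s+2}}=End^{\hat s}$. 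Since $\ioda_W$ sends $t\mapsto t+1$ and $\md_a=[D,\cdot]$ sends $s\mapsto s+1$ while $\bbpd$ leaves both unchanged, the horizontal and vertical differentials of $\cC^{\bullet,\bullet}$ are $\vartheta_a$ and $\bbpd$, and the total complex with differential $\bbpd+\vartheta_a=\Delta_a$ is exactly $(\PV(X,End(E)),\Delta_a)$, whose cohomology is $\HPV(X,a)$.

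Next I would consider the spectral sequence $\bE$ of the decreasing filtration by the horizontal index $k$. Although $\cC^{\bullet,\bullet}$ is unbounded in the horizontal direction, the induced filtration on each total-degree component $\cC^m=\bigoplus_{k+j=m}\cA^j\otimes\cP_a^k$ is finite since $j$ is confined to $\{0,\ldots,d\}$; hence, exactly as in Proposition \ref{prop:Boundary}, $\bE$ converges to $\rH(\cC^\bullet)\simeq\HPV(X,a)$. Its zeroth page is $\bE_0^{k,j}=\cA^j(X,\cP_a^k)$ with $\dd_0=\bbpd$. Because $\cP_a^k$ is a holomorphic vector bundle and $X$ is Stein, Cartan's Theorem B (see Theorem \ref{thm:B}) gives $\bE_1^{k,j}=\rH^j_{\bbpd}(\cP_a^k)=0$ for $j>0$, leaving only the bottom row $\bE_1^{k,0}=\rH^0(\cP_a^k)=\bP_a^k$ with differential $\dd_1=\vartheta_a$. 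Thus $\bE_2^{k,0}$ is the cohomology at position $k$ of the sequence $(\bP_a)$ in \eqref{H0seq_disk}, all higher differentials vanish as maps between distinct rows, and $\bE$ degenerates at $\bE_2$. Matching the $2$-periodicity $\cP_a^{k+2}\cong\cP_a^k$ with the total $\Z_2$-grading then yields the natural isomorphism $\HPV^{\hat k}(X,a)\simeq_{\O(X)}\rH^k(\bP_a)$; the Serre-Swan theorem for Stein manifolds (see \cite{ForsterStein,Morye}) shows that the modules $\bP_a^k=\rH^0(\cP_a^k)$ are finitely-generated projective over $\O(X)$, and naturality is checked by tracking the construction through $\bE$.

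The only point that needs care — and it is not genuinely hard, being essentially identical to the convergence argument of Proposition \ref{prop:Boundary} — is setting up the refinement of the grading so that $\dd_0=\bbpd$ and verifying that the induced filtration on the total complex is finite in each degree; this is where boundedness of the Dolbeault direction by $\dim_\C X=d$ enters. Once this is secured, Cartan's Theorem B collapses the first page onto its bottom row and the statement follows.
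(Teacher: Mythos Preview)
Your proposal is correct and follows essentially the same approach as the paper: unwind $(\PV(X,End(E)),\bbpd,\vartheta_a)$ to a horizontally $2$-periodic, vertically bounded bicomplex, run the column-filtration spectral sequence with $\dd_0=\bbpd$, invoke Cartan's Theorem B to collapse page one onto the bottom row $\bP_a^\bullet$ with differential $\vartheta_a$, and conclude degeneration at $\bE_2$. The paper's argument is the same in all essential respects, only more tersely stated; your added remarks on convergence via finiteness of the induced filtration (as in Proposition \ref{prop:Boundary}) and on projectivity via Serre--Swan are correct and simply make explicit what the paper leaves implicit.
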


\begin{proof}
The bicomplex $(\PV(X,End(E)),\vartheta_a,\bbpd)$ can unwind to a
horizontally 2-periodic bicomplex ${}^{1}\cC^{\bullet,\bullet}$ with
vertical differential $\bbpd$ and horizontal differential
$\vartheta_a$, where:
\be
{}^{1}\cC^{i,j} \eqdef \bigoplus_{t+s=i} \cA^j(X,\wedge^{|t|} 
TX\otimes End^{{\hat s}}(E))~,~~\forall s, j\in \Z~,~~\forall t\in \Z_{\leq 0}~~.
\ee
We have ${}^{1}\cC^{i,j}=0$ unless $j\in \{0,\ldots, d\}$, so this
complex is vertically bounded. Its associated spectral sequence
${}^1\bE\eqdef ({}^{1}\bE_r^{i,j},{}^{1}\dd_r)_{r\geq 0}$ has zeroth
page given by:
\ben
\label{E_3_0}
{}^{1}\bE_0^{i,j} \eqdef {}^{1}\cC^{i,j} ~,~~{}^{1}\dd_0 \eqdef \bbpd~~.
\een
The columns are the Dolbeault resolutions of 
$\cP_a^i=\oplus_{t+s=i}\wedge^{|t|} TX\otimes End^{{\hat s}}(E)$. This spectral
sequence converges since the bicomplex ${}^{1}\cC^{\bullet,\bullet}$ is
vertically bounded. Since $X$ is Stein, Cartan's Theorem B implies
$\rH^j_{\bbpd}(\wedge^{|t|} TX\otimes End^{{\hat s}}(E))=0$ for
$j>0$. Thus on page 1 the spectral sequence is concentrated at the
zeroth row ($j=0$) and has differential ${}^{1}\dd_1\eqdef\vartheta_a$. 
It follows that ${}^{1}d_2=0$, since these differentials are
maps between different rows. Thus ${}^1\bE$ degenerates at
the second page and:
\ben
\rH^k_{\Delta_a} (\PV(X,End(E))) = \rH^k_{\vartheta_a}({}^{1}\bE_1^{\bullet,0}) = \rH^k(\bP_a) \ . 
\een
~\qed
\end{proof}

\

\noindent 
Even without the assumption that $X$ be Stein, the cohomology of the
complex \eqref{H0seq_disk} can itself be computed using another spectral
sequence. Indeed, the decomposition $\vartheta_a = \ioda_W+\md_a$
implies:
\be
\bP_a^{i}=\bigoplus_{s+t=i} {}^{2}\cC^{s,t}~~,
\ee
where ${}^{2}\cC^{\bullet,\bullet}$ is the bicomplex of $\O(X)$-modules defined
through:
\ben
\label{3C_lab}
{}^{2}\cC^{s,t} \eqdef \rH^0(\wedge^{|t|} TX\otimes End^{{\hat s}}(E))~,~~\forall s\in \Z~,
~~\forall t\in \Z_{\leq 0}~~
\een
with vertical differential $\ioda_W$ and horizontal differential
$\md_a$. Consider the following spectral sequence ${}^{2}\bE\eqdef
({}^{2}\bE_r^{s,t},{}^{2}\dd_r)_{r\geq 0}$ with zeroth page defined by:
\ben
\label{2bE0}
{}^{2}\bE_0^{s,t} \eqdef {}^{2}\cC^{s,t}~,~~{}^{2}\dd_0 \eqdef \ioda_W~~,
\een
with ${}^{2}\cC^{\bullet,\bullet}$ as in (\ref{3C_lab}).

\

\begin{Lemma}
The spectral sequence ${}^{2}\bE$ defined above degenerates at
page at most $d+2$ and converges to the cohomology of the
complex of projective $\O(X)$-modules \eqref{H0seq_disk}.
\end{Lemma}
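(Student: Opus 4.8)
The plan is to realise ${}^{2}\bE$ as the spectral sequence of a finite filtration on the total complex $\bP_a^\bullet$ and then quote the standard convergence and degeneration criteria, in close parallel with the proof of Proposition \ref{prop:Boundary}. Recall from the discussion preceding the statement that $\bP_a^i=\bigoplus_{s+t=i}{}^{2}\cC^{s,t}$ is the total complex of the bicomplex $({}^{2}\cC^{\bullet,\bullet},\ioda_W,\md_a)$ of \eqref{3C_lab} (with total differential $\vartheta_a=\ioda_W+\md_a$, its two summands being mutually anticommuting differentials), and that ${}^{2}\bE$ is the spectral sequence obtained by filtering by the first index $s$, so that ${}^{2}\dd_0=\ioda_W$ is exactly the part of $\vartheta_a$ which preserves $s$, while ${}^{2}\dd_1$ is induced by $\md_a$. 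The relevant decreasing filtration by subcomplexes is $F^p(\bP_a^i)\eqdef\bigoplus_{s\geq p}{}^{2}\cC^{s,i-s}$; it is stable under $\vartheta_a$ since $\ioda_W$ preserves $s$ and $\md_a$ raises it by one.

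First I would observe that ${}^{2}\cC^{s,t}=0$ unless $t\in\{-d,\ldots,0\}$, because $\wedge^{|t|}TX=0$ whenever $|t|>d$ and we only allow $t\leq 0$. Consequently, for each fixed total degree $i$ the constraint $s+t=i$ with $t\in\{-d,\ldots,0\}$ forces $s\in\{i,i+1,\ldots,i+d\}$, so the induced filtration on $\bP_a^i$ is finite, with $F^i(\bP_a^i)=\bP_a^i$ and $F^{i+d+1}(\bP_a^i)=0$. Although the total complex $\bP_a^\bullet$ is $2$-periodic and hence unbounded, each of its graded pieces carries a finite filtration, so the convergence theorem for filtered complexes (see \cite[$\SSS$14]{BoTu}) applies just as in Proposition \ref{prop:Boundary} and shows that ${}^{2}\bE$ converges to the total cohomology $\rH^\bullet(\bP_a)$, that is, to the cohomology of the complex \eqref{H0seq_disk}.

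For the degeneration I would track bidegrees: the differential ${}^{2}\dd_r$ has bidegree $(r,1-r)$, so it carries a class of total degree $n$ sitting at filtration level $p$ to one of total degree $n+1$ at filtration level $p+r$. For ${}^{2}\dd_r$ to be nonzero we need $p$ in the support $\{n,\ldots,n+d\}$ of total degree $n$ and $p+r$ in the support $\{n+1,\ldots,n+1+d\}$ of total degree $n+1$; the inequalities $p\geq n$ and $p+r\leq n+1+d$ then give $r\leq d+1$. Hence ${}^{2}\dd_r=0$ for all $r\geq d+2$, so ${}^{2}\bE$ degenerates at page at most $d+2$.

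The argument is essentially a transcription of the one used for Proposition \ref{prop:Boundary}, with the Dolbeault and defect differentials there replaced by $\ioda_W$ and $\md_a$ here, so I do not expect a genuine obstacle. The only point deserving a word of care is that the filtered complex $\bP_a^\bullet$ is $2$-periodic and therefore not bounded; one must accordingly invoke the form of the convergence theorem requiring only that the induced filtration on each total-degree component be finite, exactly as was done in Proposition \ref{prop:Boundary}.
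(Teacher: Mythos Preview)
Your proof is correct and follows essentially the same approach as the paper: both arguments exploit the vertical boundedness ${}^{2}\cC^{s,t}=0$ for $t\notin\{-d,\ldots,0\}$ to show that the induced filtration on each total-degree component is finite (guaranteeing convergence) and that ${}^{2}\dd_r$ must vanish for $r\geq d+2$ by a bidegree count. Your write-up is somewhat more explicit about the filtration and its finiteness, and your parallel with Proposition~\ref{prop:Boundary} is apt, but the substance is the same.
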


\begin{proof}
The complex ${}^{2}\cC^{\bullet,\bullet}$ is horizontally 2-periodic and
vertically bounded, since ${}^{2}\bE_0^{s,t}$ vanishes for $t\not\in
\{-d,\ldots, 0\}$. The differentials  ${}^2\dd_{r}:{}^{2}\bE_r^{s,t}\rightarrow 
{}^{2}\bE_r^{s+r,t-r+1}$ vanish for $r=d+2$ and all $s$ and $t$  since $t$ 
and $t-r+1$  cannot both lie in $\{-d,\ldots, 0\}$. Hence, the spectral sequence 
is convergent and it degenerates at page at most
$d+2$. By construction, the limit of ${}^{2}\bE$ equals
the cohomology of $\bP_a$.~~\qed
\end{proof}

\smallskip 

\begin{Proposition}
\label{prop:HPVStein}
Suppose that $X$ is Stein. Then the spectral
sequence ${}^{2}\bE$ defined above degenerates at the second page and 
${}^{2}\bE_2$ has nodes given by:
\ben
\label{2bE2}
{}^{2}\bE_2^{s,t}\simeq_{\O(X)}\HPV^t(X,W)\otimes_{\O(X)} \End^{\hat s}_{\HF(X,W)}(a)~~.
\een
\end{Proposition}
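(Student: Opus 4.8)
The plan is to analyze the bicomplex ${}^{2}\cC^{\bullet,\bullet}$ of \eqref{3C_lab} by first taking cohomology in the vertical direction (with respect to ${}^{2}\dd_0=\ioda_W$), and to identify the resulting page ${}^{2}\bE_1$ with a tensor product of $\HPV(X,W)$ and the morphism module of $\F(X,W)$. The key point is that the node ${}^{2}\cC^{s,t}=\rH^0(\wedge^{|t|}TX\otimes End^{\hat s}(E))$ factors, via Cartan's Theorem B and the projectivity supplied by the Serre-Swan theorem for Stein manifolds, as $\rH^0(\wedge^{|t|}TX)\otimes_{\O(X)}\rH^0(End^{\hat s}(E))$; indeed $\wedge^{|t|}TX\otimes End^{\hat s}(E)$ is a holomorphic vector bundle, its global sections form a finitely-generated projective $\O(X)$-module, and since $\rH^0(End^{\hat s}(E))$ is projective the functor $-\otimes_{\O(X)}\rH^0(End^{\hat s}(E))$ is exact. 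Under this identification the differential ${}^{2}\dd_0=\ioda_W$ acts only on the first tensor factor, i.e. it is $\ioda_W\otimes\id$ acting on the sequence $(\bK_W)$ of \eqref{H0seq} tensored with $\rH^0(End^{\hat s}(E))$.

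Next I would compute ${}^{2}\bE_1$. By exactness of the tensor product with the projective module $\rH^0(End^{\hat s}(E))$, taking cohomology of $(\bK_W)\otimes_{\O(X)}\rH^0(End^{\hat s}(E))$ in the vertical direction gives $\rH^t(\bK_W)\otimes_{\O(X)}\rH^0(End^{\hat s}(E))$. By Theorem \ref{thm:BulkDeg} we have $\rH^t(\bK_W)\simeq_{\O(X)}\HPV^t(X,W)$ (using that $X$ is Stein, so the spectral sequence $\bE$ collapses at the second page and $\HPV$ is concentrated in non-positive degrees). Hence
\be
{}^{2}\bE_1^{s,t}\simeq_{\O(X)}\HPV^t(X,W)\otimes_{\O(X)}\rH^0(End^{\hat s}(E))~~,
\ee
with the remaining horizontal differential ${}^{2}\dd_1$ induced by $\md_a=[D,\cdot]$, which again acts only on the second tensor factor (it is $\id\otimes\md_a$, since $\md_a$ is $\O(X)$-linear and commutes through the $\HPV$ factor). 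So ${}^{2}\bE_1^{\bullet,t}$ is, for each fixed $t$, the complex $\HPV^t(X,W)\otimes_{\O(X)}\big(\rGamma(X,End(E)),\md_a\big)$, the latter being precisely $\Hom_{\F(X,W)}(a,a)$ with its differential.

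Then I would take cohomology once more. Again invoking that $\HPV^t(X,W)$ is (finitely-generated) projective over $\O(X)$ by Theorem \ref{thm:BulkDeg}, the functor $\HPV^t(X,W)\otimes_{\O(X)}-$ is exact, so
\be
{}^{2}\bE_2^{s,t}\simeq_{\O(X)}\HPV^t(X,W)\otimes_{\O(X)}\rH_{\md_a}^{\hat s}\big(\rGamma(X,End(E))\big)=\HPV^t(X,W)\otimes_{\O(X)}\End_{\HF(X,W)}^{\hat s}(a)~~,
\ee
which is \eqref{2bE2}. Finally, degeneration at the second page is immediate from the shape argument already used in Lemma \ref{lemma:BoundaryStein} and Theorem \ref{thm:BulkDeg}: the differentials ${}^{2}\dd_r$ for $r\geq 2$ map between nodes whose vertical indices $t$ and $t-r+1$ differ by at least $2$, and since on page $\bE_2$ the $\HPV^t$-grading is honest (the only surviving vertical cohomology sits in the single position dictated by $t$ within the collapsed sequence), no such differential can be nonzero; alternatively one observes directly that ${}^{2}\dd_2$ already vanishes because it is a map between distinct rows of ${}^{2}\bE_2$.

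The main obstacle is justifying the two ``tensor factors separate'' steps rigorously: one must verify both that the global-sections functor on the external tensor product of holomorphic bundles splits as a tensor product of $\O(X)$-modules in the Stein case (this is where Cartan's Theorem B and projectivity via Serre-Swan enter, together with the fact that $\rH^0$ is exact on short exact sequences of holomorphic vector bundles over a Stein manifold), and that the differentials $\ioda_W$ and $\md_a$ respect this decomposition — the former acting on the polyvector factor, the latter on the $End(E)$ factor — which follows from $\O(X)$-linearity of $\md_a$ and from $\ioda_W$ being a bundle map tensored with the identity on $End(E)$. Once this bookkeeping is in place, exactness of tensoring with the projective modules $\HPV^t(X,W)$ and $\rH^0(End^{\hat s}(E))$ does the rest, and the degeneration is purely formal.
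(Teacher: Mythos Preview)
Your overall strategy---factoring ${}^{2}\cC^{s,t}$ as a tensor product of $\rH^0(\wedge^{|t|}TX)$ and $\rH^0(End^{\hat s}(E))$ via the Stein hypothesis, then computing successive pages by exploiting projectivity of the tensor factors---is exactly the paper's approach, and your identification of ${}^{2}\bE_1$ and ${}^{2}\bE_2$ matches what the paper writes.

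There is, however, a gap in one of your explicit justifications. You assert that $\HPV^t(X,W)$ is finitely-generated projective over $\O(X)$ ``by Theorem \ref{thm:BulkDeg}'' and use this to commute $\HPV^t(X,W)\otimes_{\O(X)}-$ past the $\md_a$-cohomology. But Theorem \ref{thm:BulkDeg} only identifies $\HPV^t(X,W)$ with the cohomology at position $t$ of a complex $(\bK_W)$ of finitely-generated projectives; it does not say the cohomology modules themselves are projective (or flat), and in general they need not be. The paper's proof records the same formula ${}^{2}\bE_{2}^{s,t} = \rH^t(\bK_W) \otimes_{\O(X)}\End_{\HF(X,W)}^{\hat s}(a)$ without further comment, so you match its level of detail, but the reason you state is not correct.

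Your degeneration argument has a similar problem. The shape argument from Lemma \ref{lemma:BoundaryStein} and Theorem \ref{thm:BulkDeg} works because Cartan's Theorem B collapses the relevant first page to a \emph{single row}, whence any $\dd_r$ with $r\geq 2$ has either zero source or zero target. Here ${}^{2}\bE_1^{s,t}$ and ${}^{2}\bE_2^{s,t}$ can be nonzero for several values of $t\in\{-d,\ldots,0\}$ (wherever $\HPV^t(X,W)\neq 0$), so ``a map between distinct rows'' does not by itself force vanishing. The paper's proof does not supply a separate argument for degeneration either; but the specific reasoning you import does not apply in this bicomplex.
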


\begin{proof}
Recall the sequence $\bK_W$ of projective $\O(X)$-modules defined in
\eqref{H0seq}. Since $X$ is Stein, we have $\rH^0(\wedge^{|t|} TX\otimes End^{{\hat
    s}}(E))\simeq_{\O(X)} \rH^0(\wedge^{|t|} TX)\otimes_{\O(X)} \rH^0(End^{\hat
  s}(E))$ (see \cite[page 403]{ForsterStein}. Hence \eqref{2bE0}
reduces to:
\be
{}^{2}\bE_0^{s,t}=\rH^0(\wedge^{|t|} TX)\otimes_{\O(X)} \rH^0(End^{\hat s}(E))=\bK_W^t\otimes_{\O(X)} \rH^0(End^{\hat s}(E))~~,
\ee
with differential ${}^{2}\dd_0\eqdef\ioda_W$. Thus:
\be
{}^{2}\bE_1^{s,t}=\rH^t(\bK_W)\otimes_{\O(X)} \rH^0(End^{{\hat s}}(E))=\rH^t(\bK_W)\otimes_{\O(X)} \rGamma(X,End^{{\hat s}}(E))
\ee
with first page differentials ${}^{2}\dd_1\eqdef\md_a$. This
implies that the second page has nodes given by: 
\ben
\label{2bE22}
{}^{2}\bE_{2}^{s,t} = \rH^t(\bK_W) \otimes_{\O(X)}\End_{\HF(X,W)}^{\hat s}(a)~~.
\een
Since $X$ is Stein, Theorem \ref{thm:BulkDeg} gives
$\rH^t(\bK_W)\simeq_{\O(X)}\HPV^t(X,W)$, so \eqref{2bE22} reduces to
\eqref{2bE2}.~\qed
\end{proof}

\

\begin{Proposition}
\label{prop:HPVSteinDiscrete}
Suppose that $X$ is Stein and $\dim_\C Z_W=0$. Then the spectral
sequence ${}^{2}\bE$ degenerates at the second page and there exists a
natural isomorphism of $\Z_2$-graded $\O(X)$-modules:
\ben
\label{HPVXa}
\HPV(X,a)\simeq_{\O(X)} \rH_{\md_a}(\rH_{\ioda_W}({}^{2}\cC^{\bullet,\bullet}))\simeq_{\O(X)} \Jac(X,W)\otimes_{\O(X)}\End_{\HF(X,W)}(a)~~.
\een
\end{Proposition}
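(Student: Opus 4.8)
The plan is to combine Proposition \ref{prop:HPVStein} with the vanishing of the bulk cohomology in nonzero degrees established in Proposition \ref{prop:HJac}. First I would invoke Proposition \ref{prop:HPVStein}: since $X$ is Stein, the spectral sequence ${}^{2}\bE$ degenerates at the second page, with nodes
\be
{}^{2}\bE_2^{s,t}\simeq_{\O(X)}\HPV^t(X,W)\otimes_{\O(X)} \End^{\hat s}_{\HF(X,W)}(a)~~.
\ee
Now I would bring in the hypothesis $\dim_\C Z_W=0$: by Proposition \ref{prop:HJac}, $X$ being Stein with $\dim_\C Z_W=0$ forces $\HPV^t(X,W)=0$ for $t\neq 0$ and $\HPV^0(X,W)\simeq_{\O(X)}\Jac(X,W)$. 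Substituting this into the expression for ${}^2\bE_2$ shows that ${}^{2}\bE_2^{s,t}=0$ unless $t=0$, in which case ${}^{2}\bE_2^{s,0}\simeq_{\O(X)}\Jac(X,W)\otimes_{\O(X)}\End^{\hat s}_{\HF(X,W)}(a)$.

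Next I would observe that since ${}^2\bE_2$ is concentrated in the single row $t=0$, all higher differentials ${}^2\dd_r$ for $r\geq 2$ are automatically zero (they necessarily change the $t$-degree), so the spectral sequence in fact degenerates at ${}^2\bE_2$ — which also reconfirms the degeneration claim in the statement in this special case. The limit therefore equals $\bigoplus_{s+t=k}{}^2\bE_2^{s,t}={}^2\bE_2^{k,0}$, and by the preceding Lemma this limit computes the cohomology of the complex of projective $\O(X)$-modules \eqref{H0seq_disk}, which by the Proposition just before it is naturally isomorphic with $\HPV^{\hat k}(X,a)$. Reading off the total $\Z_2$-graded module gives
\be
\HPV(X,a)\simeq_{\O(X)}\rH_{\md_a}(\rH_{\ioda_W}({}^{2}\cC^{\bullet,\bullet}))\simeq_{\O(X)}\Jac(X,W)\otimes_{\O(X)}\End_{\HF(X,W)}(a)~~,
\ee
where the first isomorphism is the identification of $\HPV(X,a)$ with the $E_2$-term (equivalently, with the cohomology of the complex $\bP_a$) and the second is the computation of ${}^2\bE_2$ above.

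The one point requiring a little care — and the natural candidate for "the main obstacle" — is checking that these identifications are genuinely natural and compatible with the $\Z_2$-grading, i.e. that the isomorphism $\HPV^0(X,W)\simeq\Jac(X,W)$ from Proposition \ref{prop:HJac} and the K\"unneth-type splitting $\rH^0(\wedge^{|t|}TX\otimes End^{\hat s}(E))\simeq_{\O(X)}\rH^0(\wedge^{|t|}TX)\otimes_{\O(X)}\rH^0(End^{\hat s}(E))$ used in the proof of Proposition \ref{prop:HPVStein} assemble into a single natural isomorphism at the level of the total complex, with no sign or grading-shift ambiguities. Since all of this was already arranged in the proof of Proposition \ref{prop:HPVStein}, here it amounts to tracking the total $\Z_2$-degree $\hat k = \widehat{s+t}$ through the degeneration, and the argument is essentially a corollary of the two cited results.
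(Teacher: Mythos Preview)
Your proposal is correct and follows essentially the same approach as the paper's own proof, which simply invokes Proposition~\ref{prop:HJac} to conclude that $\HPV(X,W)$ is concentrated in degree zero and isomorphic to $\Jac(X,W)$, then says the statement follows from Proposition~\ref{prop:HPVStein}. You have merely spelled out the details of ``follows easily'' by tracking the collapse of ${}^2\bE_2$ to the row $t=0$ and connecting back to $\HPV(X,a)$ via the preceding Lemma and Proposition.
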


\begin{proof}
Since $X$ is Stein and $\dim_\C Z_W=0$, Proposition \ref{prop:HJac} gives
$\HPV(X,W)\simeq \Jac(X,W)$, where the right hand side is concentrated
in degree $0$. Now the statement follows easily from Proposition \ref{prop:HPVStein}.
\ 
~\qed
\end{proof}

\

\section{Some examples}
\label{sec:examples}

\noindent In this section, we discuss a few classes of examples which
illustrate the general results obtained above. In particular, we show
that topologically non-trivial holomorphic factorizations are abundant
in B-type topological Landau-Ginzburg models defined on Stein
manifolds and illustrate some of the differences between these and the
algebraic models which are more commonly considered in the literature.

\

\subsection{Domains of holomorphy in $\C^d$}
\label{domain}

Let $X=U\subseteq \C^d$ be a domain of holomorphy\footnote{By the
Cartan-Thullen theorem (see \cite{GR}), a domain $U\subset \C^d$ is
Stein iff it is holomorphically convex, i.e. iff it is a domain of
holomorphy. Notice that domains of holomorphy are very special cases
of Stein manifolds.}. In this case, $U$ is Stein and holomorphically
parallelizable (and hence Calabi-Yau). Moreover, $U$ admits integrable
holomorphic frames given by $u_i=\partial_i$, where $\partial_i:=
\frac{\partial}{\partial z^i}$ and $\{z^1,\ldots, z^d\}$ is a system
of globally-defined complex coordinates on $U$. Assume that $W\in
\O(U)$ has isolated critical points. Then Proposition
\ref{prop:BulkPar} gives:
\be
\HPV(U,W)=\HPV^0(U,W)\simeq_{\O(U)} \Jac(U,W)=\O(U)/\langle \partial_1 W,\ldots, \partial_d W\rangle~~,
\ee
thereby recovering a result of \cite{LLS}. Let us further assume that
$U$ is contractible.  Then any finitely-generated projective
$\O(U)$-module is free\footnote{In this case, any complex vector
bundle defined on $U$ is topologically trivial so the Oka-Grauert
principle (see \cite{OkaGrauert,Oka}) implies that any holomorphic vector
bundle defined on $U$ is holomorphically trivial.}. In this case,
projective analytic factorizations coincide with analytic matrix
factorizations and $\HDF(U,W)$ is equivalent with the $\Z_2$-graded
cohomological category $\HMF(U,W)$ of analytic matrix factorizations
of $W$ by Theorem \ref{thm:BoundaryStein} and Corollary
\ref{cor:MF}. Moreover, Proposition \ref{prop:HPVSteinDiscrete} gives
$\HPV(U,a)\simeq_{\O(U)} \Jac(U,W)\otimes_{\O(U)} \End_{\HF(U,W)}(a)$ for any
holomorphic factorization $a$.  The simplest situation is obtained for
$U=\C^d$. In that case, $\O(U)=\O(\C^d)$ is the ring of entire
functions of $d$ variables, which is already very rich
\cite{Lelong}. For $d=1$, we have $\O(U)=\O(\C)$, which is much larger
than the ring $\O_\alg(\C)=\C[z]$ of univariate polynomials with
complex coefficients. Indeed, Liouville's theorem implies that an
entire function $f\in \O(\C)$ is a polynomial iff it has at most a
pole singularity at infinity. Thus $\O(\C)\backslash \O_\alg(\C)$ 
consists of all entire functions with an essential singularity at
infinity (a simple example of which is the exponential function
$f(z)=e^z$). There appears to exist no good reason (apart from mere
convenience) to require $W$ to be a polynomial.

\

\subsection{Non-compact Riemann surfaces}
\label{RS}

Let $X$ be a smooth, connected and non-compact Riemann surface without
boundary. As recalled in Appendix \ref{app:Stein}, every such surface
is Stein by a result of Behnke and Stein \cite{BS}. Moreover, any
holomorphic vector bundle on $X$ is holomorphically trivial
\cite[Theorem 30.3]{FO}. In particular, $X$ is holomorphically
parallelizable and hence Calabi-Yau. Since $X$ is complex
one-dimensional, holomorphic parallelizability implies existence of a
globally-defined holomorphic vector field $v$ on $X$.  Since any
holomorphic vector bundle defined on $X$ is holomorphically trivial,
it follows that any finitely-generated projective $\O(X)$-module is free,
hence $\HPF(X,W)$ coincides with $\HMF(X,W)$ (See Corollary
\ref{cor:MF}). This also follows from the fact that the ring $\O(X)$
is a B\'ezout domain (see \cite{Helmer2,Henriksen3,Alling1,Alling2})
and from the fact that any finitely-generated projective module over a
B\'ezout domain is free (see \cite{FS}). Matrix factorizations over
B\'ezout domains were studied in \cite{bezout}.

\

\begin{Proposition}
\label{Riemann}
Let $W\in \O(X)$ be a non-constant holomorphic function. Then: 
\be
\HPV(X,W)=\HPV^0(X,W)\simeq_{\O(X)} \Jac(X,W)=\O(X)/\langle v(W) \rangle~~,
\ee
where $v$ is any non-trivial globally-defined holomorphic vector field on $X$.
Moreover, there exist equivalences of $\O(X)$-linear $\Z_2$-graded
categories:
\be
\HDF(X,W)\simeq \HF(X,W)\simeq \HF_\free(X,W)\simeq \HMF(X,W)~~.
\ee
For any holomorphic factorization $a$ of $W$, we have: 
\be
\HPV(X,a)\simeq_{\O(X)} \Jac(X,W)\otimes_{\O(X)}  \End_{\HF(X,W)}(a)~~.
\ee
\end{Proposition}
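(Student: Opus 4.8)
The plan is to assemble Proposition \ref{Riemann} entirely from the general Stein-case results already proved, specialized to a non-compact Riemann surface. First I would record the relevant properties of $X$: by the Behnke--Stein theorem $X$ is Stein, every holomorphic vector bundle on $X$ is holomorphically trivial (so $X$ is holomorphically parallelizable, hence Calabi-Yau, and the pair $(X,W)$ is an LG pair whenever $W$ is non-constant), and consequently every finitely-generated projective $\O(X)$-module is free. Since $\dim_\C X = 1$, holomorphic parallelizability gives a globally-defined nowhere-vanishing holomorphic vector field $v$, which serves as a one-element holomorphic frame of $TX$.

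Next I would treat the bulk algebra. Because $X$ is complex one-dimensional, any non-constant $W\in\O(X)$ automatically has $\dim_\C Z_W = 0$ (its critical locus is a proper analytic subset of a curve, hence discrete). Thus Proposition \ref{prop:BulkPar} applies with the frame $u_1 = v$, yielding $\HPV(X,W) = \HPV^0(X,W)\simeq_{\O(X)}\Jac(X,W)$ together with $\Jac(X,W) = \O(X)/\langle v(W)\rangle$. For the categorical statement I would invoke Theorem \ref{thm:BoundaryStein} for the equivalence $\HDF(X,W)\simeq\HF(X,W)$; then, since every holomorphic vector bundle on $X$ is topologically trivial (indeed holomorphically trivial), Corollary \ref{cor:MF} gives $\F(X,W) = \F_\free(X,W)$ and the equivalence $\HF(X,W)\simeq\HMF(X,W)$, while $\HF(X,W) = \HF_\free(X,W)$ holds on the nose. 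Composing these yields the chain $\HDF(X,W)\simeq\HF(X,W)\simeq\HF_\free(X,W)\simeq\HMF(X,W)$. Finally, for a holomorphic factorization $a$ of $W$, Proposition \ref{prop:HPVSteinDiscrete} (applicable since $X$ is Stein and $\dim_\C Z_W = 0$) gives $\HPV(X,a)\simeq_{\O(X)}\Jac(X,W)\otimes_{\O(X)}\End_{\HF(X,W)}(a)$.

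There is essentially no main obstacle here: every ingredient is a direct citation of an earlier result, and the only thing to verify is that the hypotheses of those results hold on a non-compact Riemann surface. The one small point requiring a sentence of justification is that $\dim_\C Z_W = 0$ is automatic in complex dimension one, so that the ``isolated critical points'' hypotheses in Proposition \ref{prop:BulkPar} and Proposition \ref{prop:HPVSteinDiscrete} are satisfied for every non-constant $W$; this follows because $Z_W$ is a proper analytic subvariety of the connected curve $X$ (it is proper since $W$ is non-constant, so $\pd W$ is a non-zero holomorphic section of $T^\ast X$), and a proper analytic subvariety of a complex curve is zero-dimensional. I would also remark, as the paper does, that the freeness of finitely-generated projective $\O(X)$-modules can alternatively be seen from the fact that $\O(X)$ is a B\'ezout domain, giving a second route to the equivalence $\HF(X,W)\simeq\HMF(X,W)$.
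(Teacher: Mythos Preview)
Your proposal is correct and follows essentially the same route as the paper: the paper's proof simply notes that $\dim_\C Z_W=0$ since $W$ is non-constant and then cites Proposition~\ref{prop:BulkPar}, Theorem~\ref{thm:BoundaryStein}, Corollary~\ref{cor:MF} and Proposition~\ref{prop:HPVSteinDiscrete}, relying on exactly the Behnke--Stein and triviality-of-vector-bundles observations you list (including the B\'ezout-domain remark). There is nothing to add.
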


\begin{proof}
Notice that $\dim_\C Z_W=0$ since $W$ is non-constant. The conclusion
now follows immediately from Proposition \ref{prop:BulkPar}, Theorem
\ref{thm:BoundaryStein}, Corollary \ref{cor:MF} and Proposition
\ref{prop:HPVSteinDiscrete} upon using the observations made
above.~\qed
\end{proof}

\

\noindent Notice that the non-compact Riemann surface $X$ need not be
(affine) algebraic; in particular, it can have infinite genus and an
infinite number of ends. 

\

\subsection{Application to D-branes in topological deformations 
of B-twisted $A_n$ minimal models}

An {\em algebraic-geometric} version of the category of topological
D-branes in B-twisted $A_n$ minimal models and their deformations was
considered in the physics literature (see, for example,
\cite{BHLS,KL2,HLL}). In such models, $X$ is the complex plane $\C$
and $W$ is a univariate polynomial function.  As pointed out in
\cite{LG2}, the framework used in \cite{HLL} (and later on in the
mathematics literature on Landau-Ginzburg models) can be generalized
in models which are away from the conformal point since in a
Landau-Ginzburg model which is not required to be scale-invariant
there is no reason to restrict $W$ to be a polynomial or to restrict
the ring $\O(\C)$ of entire functions to the ring $\O_\alg(\C)\simeq
\C[z]$ of regular algebraic functions defined on $\C$, where $\C[z]$
is the ring of univariate polynomials. In these examples (and more
generally when $X$ is a smooth non-compact Riemann surface and $W$ is
a holomorphic function which has only a finite number of multiple
zeros) one can show, however, that replacing $\O(X)$ with $\O_\alg(X)$
leads to essentially the same result for the homotopy category of
topological D-branes.

Restricting to the case $X=\C$, let us assume that the entire function
$W\in \O(\C)$ is ``critically finite'' in the sense that it has only a
finite number of multiple zeroes (though it can have an infinite
number of simple zeroes). Since we are in the Stein case, this
assumption is justified by the requirement (see \cite{nlg1}) that the
on-shell bulk and boundary state spaces of the topological
Landau-Ginzburg model be finite-dimensional (see Remark
\ref{rem:SteinFinite}). By the discussion of the previous subsection,
we have $\HF(\C,W)\simeq \HMF(\C,W)$. Consider the free
$\O(\C)$-supermodule $M=M^\0 \oplus M^\1$, where $M^\0=M^\1=\O(\C)$.
For each zero of $W$ of multiplicity $k>1$, there exist $k-1$
``primary holomorphic matrix factorizations'' of $W$ having $M$ as
their underlying supermodule.  Using more general results about
elementary divisor domains, one can show \cite{edd} that these special
factorizations additively generate the homotopy category
$\mathrm{hmf}(\C,W)=\HMF^\0(\C,W)$ of finite rank holomorphic matrix
factorizations over $\C$, which in turn as a triangulated category is
generated by a single object; this follows from more general results
which we establish in upcoming work for any elementary divisor
domain. These results show that the holomorphic and algebraic homotopy
categories of matrix factorizations of ``critically finite''
holomorphic superpotentials $W$ can be identified as triangulated
categories, even though the ring $\O(\C)$ is not Noetherian.

\

\subsection{Complements of affine hyperplane arrangements}
\label{subsec:ha}

The theory of affine hyperplane arrangements (see
\cite{Dimca,Yuzvinsky}) allows one to construct a large class of
parallelizable Stein manifolds of non-trivial topology and with a
strong combinatorial flavor\footnote{The Stein property of these and
similar examples follows from the general fact \cite{DG} that the
complement of an analytic hypersurface in a Stein manifold is
Stein. For complements of complex affine hyperplane arrangements, it
also follows from the fact that they are analytic spaces associated to
non-singular complex affine varieties.}.  Let $\cA$ be a
$d$-dimensional central complex affine hyperplane arrangement, i.e. a
finite collection of distinct hyperplanes $H$ in $\C^d$, each of which
passes through the origin. Let $\alpha_H:\C^d\rightarrow \C$ be
defining linear functionals for the hyperplanes of the
arrangement. Let $X$ denote the complement of the set $\cup_{H\in
\cA}H$ in $\C^d$. Then $X$ is a parallelizable (and hence Calabi-Yau)
Stein manifold. Moreover, $X$ is the analytic space associated to a
non-singular complex affine variety which can be realized as the
hypersurface in $\C^{d+1}$ given by the equation:
\be
x_{d+1}\prod_{H\in \cA}{\alpha_H(x_1,\ldots, x_d)}=1~~.
\ee
The cohomology ring $\rH(X,\Z)$ is isomorphic with the {\em
Orlik-Solomon algebra} of $\cA$, which is defined as follows. Fix a
total ordering of $\cA$ and let $E$ be the exterior $\Z$-algebra on
the degree one generators $(e_H)_{H\in \cA}$ and $\pd$ be the degree
$-1$ differential on $E$ which is determined uniquely by the condition
$\pd(e_H)=1$ for all $H\in \cA$. For any non-empty subset $S$ of
$\cA$, let $e_S$ be the corresponding Grassmann monomial with respect
to the total order chosen on $\cA$. We say that $S$ is {\em dependent}
if $\cap_{H\in S}H\neq \emptyset$ and the defining linear functionals
$\alpha_H$ of the hyperplanes $H\in S$ are linearly dependent. Let $I$
be the homogeneous ideal of $E$ generated by the elements
$\partial(e_S)$ with dependent $S$. The Orlik-Solomon algebra of $\cA$
is the $\Z$-graded quotient algebra $A \eqdef E/I$. One can show that
each homogeneous component $A^k$ of $A$ is a free $\Z$-module of
finite rank (see \cite[Corollary 3.1]{Dimca}).  A classical result due
to \cite{Brieskorn,OS} (see \cite[Theorem 3.5]{Dimca}) states that the
cohomology ring $\rH(X,\Z)$ is isomorphic with $A$ as a graded
$\Z$-algebra; in particular, each cohomology group $\rH^k(X,\Z)$ is a
free $\Z$-module of finite rank. The {\em intersection poset $L$} of
$\cA$ is the set of all those subspaces $F$ of $\C^d$ (called {\em
flats}) which arise as finite intersections of hyperplanes from $\cA$,
ordered by reverse inclusion. This poset contains $\C^d$ (the
intersection of the empty set of hyperplanes) as its least
element. Since we assume that $\cA$ is central, $L$ is in fact a
bounded lattice with greatest element given by $\cap_{H\in \cA}H$;
moreover, it is a geometric lattice (see \cite[Theorem 2.1]{Dimca}).

Let $P_X(t)\eqdef \sum_{j=0}^d \rk \rH^j(X,\Z) \, t^j$
denote\footnote{Notice that $\rk \rH^j(X,\Z)=\rk \rH_j(X,\Z)$ by the
universal coefficient theorem, since both $\rH^j(X,\Z)$ and
$\rH_j(X,\Z)$ are finitely-generated.} the Poincar\'e polynomial of
$X$. The following result (see \cite[Theorem 2.2 and Corollary
3.6]{Dimca}) or \cite[Corollary 3.4]{Yuzvinsky}) allows one to
determine the ranks of the cohomology groups $\rH^j(X,\Z)$:

\

\begin{Proposition}
Let $\mu_L$ be the M\"{o}bius function of the locally-finite poset
$L$. Then $\mu_L(\C^d,F)\neq 0$ for all flats $F\in L$ and the sign of
$\mu_L(\C^d,F)$ equals $(-1)^{\codim_\C F}$. Moreover, we have:
\be
P_X(t)=\sum_{F \in L} |\mu_L(\C^d,F)| t^{\codim_\C F}=\sum_{F \in L} \mu_L(\C^d,F) (-t)^{\codim_\C F}~~.
\ee
\end{Proposition}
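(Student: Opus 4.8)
The plan is to treat the two assertions of the statement separately: the sign of $\mu_L(\C^d,F)$, and the formula for $P_X(t)$. I note first that the two right-hand sides in the displayed formula coincide \emph{once} the sign assertion is known, since then $(-1)^{\codim_\C F}\mu_L(\C^d,F)=|\mu_L(\C^d,F)|$, whence
\[ \mu_L(\C^d,F)\,(-t)^{\codim_\C F}=\big((-1)^{\codim_\C F}\mu_L(\C^d,F)\big)\,t^{\codim_\C F}=|\mu_L(\C^d,F)|\,t^{\codim_\C F} \]
for every flat $F$. So it suffices to prove (i) the sign statement and (ii) the identity $P_X(t)=\sum_{F\in L}|\mu_L(\C^d,F)|\,t^{\codim_\C F}$.

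For (i) I would induct on $\codim_\C F$, using that $L$ is a geometric lattice with rank function $\codim_\C$ and least element $\C^d$ (as recalled above, following \cite[Theorem 2.1]{Dimca}), and that every interval $[\C^d,F]$ is again a geometric lattice. The case $\codim_\C F=0$ is $\mu_L(\C^d,\C^d)=1$. For the inductive step, fix $F$ with $r\eqdef\codim_\C F\geq 1$, pick an atom $p$ of $[\C^d,F]$ (a hyperplane of $\cA$ containing $F$), and complete it to a maximal independent set of atoms $p=p_1,\ldots,p_r$ of $[\C^d,F]$; then $c\eqdef p_2\vee\cdots\vee p_r$ is a coatom of $[\C^d,F]$ with $c\vee p=F$ and $p\not\leq c$, so $\Sigma_F\eqdef\{G<F:G\vee p=F\}$ is non-empty. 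Weisner's theorem inside $[\C^d,F]$ gives $\sum_{G\leq F,\,G\vee p=F}\mu_L(\C^d,G)=0$; separating the term $G=F$ yields $\mu_L(\C^d,F)=-\sum_{G\in\Sigma_F}\mu_L(\C^d,G)$. Semimodularity forces $\codim_\C G=r-1$ for every $G\in\Sigma_F$ (from $p\not\leq G$ one gets $r=\codim_\C(G\vee p)\leq\codim_\C G+1$, while $G<F$ gives $\codim_\C G<r$), so by the induction hypothesis every term of the sum is non-zero of sign $(-1)^{r-1}$; since $\Sigma_F\neq\emptyset$ it follows that $\mu_L(\C^d,F)\neq 0$ with sign $(-1)^{r}=(-1)^{\codim_\C F}$. (Equivalently, one may invoke the classical fact that the order complex of the open interval $(\C^d,F)$ is homotopy equivalent to a wedge of $|\mu_L(\C^d,F)|$ spheres of dimension $\codim_\C F-2$, combined with Philip Hall's formula expressing $\mu_L(\C^d,F)$ as the reduced Euler characteristic of that order complex.)

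For (ii) I would invoke the isomorphism of graded $\Z$-algebras $\rH^\bullet(X,\Z)\cong A$ with the Orlik--Solomon algebra $A=E/I$ (Brieskorn--Orlik--Solomon, \cite[Theorem 3.5]{Dimca}, recalled above), which reduces the identity to the purely combinatorial statement $\rk_\Z A^k=\sum_{\codim_\C F=k}|\mu_L(\C^d,F)|$ for $k=0,\ldots,d$. With the total order on $\cA$ fixed, the Orlik--Solomon algebra has the \emph{no-broken-circuit} $\Z$-basis given by the monomials $e_S$ with $S\subseteq\cA$ a no-broken-circuit set (see \cite{Dimca,Yuzvinsky}); this basis is homogeneous with $\deg e_S=|S|$, and since a no-broken-circuit set is independent one has $|S|=\codim_\C\big(\bigcap_{H\in S}H\big)$. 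Grouping the basis elements of $A^k$ by their flat $F=\bigcap_{H\in S}H$ and applying Whitney's broken-circuit theorem inside each interval $[\C^d,F]$ --- which identifies the number of no-broken-circuit sets with flat $F$ with $|\mu_L(\C^d,F)|$ --- yields the displayed rank identity, and hence (ii). An alternative route avoids the no-broken-circuit machinery altogether: both $P_X$ and $\sum_{F\in L}|\mu_L(\C^d,F)|t^{\codim_\C F}$ satisfy the same deletion--restriction recursion (on the topological side via a Mayer--Vietoris comparison of the complements of $\cA$, of a deleted subarrangement and of the corresponding restriction; on the combinatorial side via $\chi_\cA=\chi_{\cA'}-\chi_{\cA''}$) and agree on the empty arrangement, where $X=\C^d$ is contractible and $L=\{\C^d\}$, so induction on $|\cA|$ gives the result.

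The main obstacle is the combinatorial core of (ii): the no-broken-circuit basis theorem for $A$ together with Whitney's broken-circuit theorem (or, on the deletion--restriction route, the topological deletion--restriction recursion for $P_X$, which rests on the fibration/Mayer--Vietoris comparison of the three complements). These are the steps where genuine work occurs; part (i) is a routine induction in geometric lattices, and passing from the combinatorial rank identity to the statement is immediate given the already-cited isomorphism $\rH^\bullet(X,\Z)\cong A$.
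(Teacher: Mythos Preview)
Your argument is correct: the Weisner--Rota induction for (i) and the Orlik--Solomon/NBC route (or the deletion--restriction alternative) for (ii) are the standard proofs of these facts in the theory of arrangements.

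However, the paper does not prove this proposition at all. It is stated with an external citation --- ``see \cite[Theorem 2.2 and Corollary 3.6]{Dimca} or \cite[Corollary 3.4]{Yuzvinsky}'' --- and used as a black box to compute $\rk\rH^2(X,\Z)$ in the examples that follow. So there is no ``paper's own proof'' to compare against: you have supplied a self-contained argument where the authors were content to quote the literature. If your aim is to match the paper, a one-line reference to Dimca or Yuzvinsky suffices; if your aim is an independent proof, what you wrote is fine, with the caveat that the ``combinatorial core'' you flag (the NBC basis theorem and Whitney's theorem, or the topological deletion--restriction long exact sequence) is itself being imported rather than proved.
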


\

\noindent In particular, one has $\rk \rH^2(X,\Z)=\sum_{F\in
L:\codim_\C F=2}|\mu_L(\C^d,F)|$, so $\rH^2(X,\Z)$ will be non-zero
when $L$ contains codimension two flats; since $\cA$ is central, this
happens iff $\cA$ contains two distinct hyperplanes. In fact, one has
$\rk \rH^j(X,\Z)>0$ for all $j=0,\ldots, \rk \cA$ and $\rk
\rH^j(X,\Z)=0$ for $j>\rk \cA$ (see \cite[Chap. 2.5, Exercise
2.5]{Dimca}), where $\rk\cA\eqdef \codim_\C \left[\cap_{H\in \cA}
H\right]$ is the {\em rank} of $\cA$; hence $\rk\cA\geq 2$ implies
$\rH^2(X,\Z)\neq 0$. By the construction of Subsection
\ref{subsec:nontriv}, it follows that the complement of any $d$-dimensional central
complex affine hyperplane arrangement $\cA$ with $d\geq 2$ and $\rk\cA\geq 2$ admits
topologically non-trivial elementary holomorphic factorizations.

\

\begin{example}
\label{ex:ha} Let $\cA$ be the hyperplane arrangement defined in
$\C^3$ by the six linear functionals $x,y,z,x-y,x-z$ and $y-z$ and let
$X=\C^3\setminus \left(\cup_{H\in \cA}H\right)$. Then $P_X(t)=1+6 t+11 t^2+6 t^3$ (see
\cite[Example 3.5]{Yuzvinsky}). In particular, we have $\rH^2(X,\Z)=\Z^{11}$.
\end{example}

\

\begin{example}
\label{ex:ba} 
Let $\cA$ be the {\em Boolean arrangement} defined in $\C^d$ (where
$d\geq 2$) by the linear functionals $x_1,\ldots, x_d$, i.e. by the
equation $x_1\cdot\ldots \cdot x_d=0$. Thus $\cA$ is the union of the
coordinate hyperplanes $x_i=0$ and the parallelizable Stein manifold
$X=\C^d\setminus \left(\cup_{H\in \cA}H\right) =(\C^\ast)^d$ is a
complex algebraic torus. The ideal $I$ vanishes and the Orlik-Solomon
algebra coincides with the Grassmann $\Z$-algebra $E$ on $d$
generators. The Poincar\'e polynomial of $X$ is given by
$P_X(t)=(t+1)^d$ (see \cite[Example 2.12 and Example 3.3]{Dimca}),
thus $\rH^2(X,\Z)\simeq \Z^{\frac{d(d-1)}{2}}$. Writing $x_k=r_k
e^{2\pi \i\theta_k}$ with $r_k>0$ and real $\theta_k$ shows that $X$
is homotopy-equivalent with the $d$-dimensional torus $T^d=\{x\in
(\C^\ast)^d \,\big|\, |x_1|=\ldots =|x_d|=1\}\simeq (\rS^1)^d$
parameterized by $\theta_1,\ldots,\theta_d$. The map $\pi:X\rightarrow
T^d$ given by:
\be
\pi(x_1,\ldots, x_d)=\Big(\frac{x_1}{|x_1|},\ldots, \frac{x_d}{|x_d|}\Big)
\ee
gives a homotopy retraction of $X$ onto $T^d$, so pull-back by $\pi$
induces an isomorphism from $\rH^2(T^d,\Z)$ to $\rH^2(X,\Z)$ and hence
(by the Oka-Grauert principle) also an isomorphism from the group
$\Prin_{\rU(1)}(T^d)$ of principal $\rU(1)$ bundles on $T^d$ to the
group $\Pic_\an(X)$ of isomorphism classes of holomorphic line bundles
on the Stein manifold $X$. Notice that $X$ can be embedded in
$\C^{d+1}$ as the affine hypersurface given by the equation $x_1\cdot \ldots
\cdot x_{d+1}=1$. Thus $X$ is the analytic space associated to the
complex affine variety:
\be
X_\alg\eqdef \Spec (\C[x_1,\ldots, x_d,x_1^{-1},\ldots, x_d^{-1}]=
\Spec\Big(\C[x_1,\ldots, x_{d+1}]/\langle x_1\ldots x_{d+1}-1\rangle \Big)~.
\ee
In this case, the algebraic Picard group $\Pic_\alg(X)$
vanishes\footnote{This follows from the fact that the Laurent
polynomial ring $\C[x_1,\ldots, x_d,x_1^{-1},\ldots x_d^{-1}]$ is a
unique factorization domain (UFD) (being the localization of the Noetherian 
UFD $\C[x_1,\ldots,
x_d]$ at a multiplicative set which doesn't contain zero) and from the
fact that the Picard group of any UFD vanishes. It also follows from a
much more general result due to Weibel (see \cite[Theorem
1.6]{Weibel}).} while $\Pic_\an(X)\simeq \Z^{\frac{d(d-1)}{2}}$. When
$d\geq 2$, the construction of Subsection \ref{subsec:nontriv} shows
that $X$ supports topologically non-trivial elementary holomorphic
factorizations, whose associated analytic factorizations are therefore
described by non-free finitely-generated projective
$\O(X)$-modules. Since its algebraic Picard group vanishes, $X$ admits
only topologically trivial algebraic elementary factorizations. This
illustrates the difference between holomorphic and algebraic B-type LG
models.
\end{example}

\

\subsection{Analytic complete intersections}
\label{subsec:ci}

Let $X\subset \C^N$ be an analytic complete intersection of complex
dimension $d$, defined by the regular sequence $f_1, \dots ,
f_{N-d}\in \O(\C^N)$. Then $X$ is holomorphically parallelizable by
results of \cite{Fo} (see Theorem \ref{thm:CI}). In particular, $X$ is
Calabi-Yau. Let $u_1,\ldots,u_d$ be a globally-defined holomorphic frame of $TX$. 

\

\begin{Proposition}
\label{ci}
Let $W \in \O(X)$ be a holomorphic function such that $\dim_\C Z_W=0$. Then:
\beqa
\HPV(X,W)=\HPV^0(X,W)\simeq_{\O(X)} \Jac(X,W)=\O(X)/\langle u_1(W),\ldots, u_d(W)\rangle ~~.
\eeqa
Moreover, we have equivalences of $\O(X)$-linear  $\Z_2$-graded
categories:
\be
\HDF(X,W)\simeq \HF(X,W)\simeq \HPF(X,W)~~.
\ee
For any holomorphic factorization $a$ of $W$, we have: 
\be
\HPV(X,a)\simeq_{\O(X)}  \Jac(X,W)\otimes_{\O(X)}  \End_{\HF(X,W)}(a) ~~.
\ee
\end{Proposition}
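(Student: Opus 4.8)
The plan is to deduce Proposition \ref{ci} as a direct corollary of the general Stein results established above, specialised to the case where $X$ is an analytic complete intersection. First I would note that by the cited results of Forster (Theorem \ref{thm:CI}), any analytic complete intersection $X\subset \C^N$ is holomorphically parallelizable, hence in particular Calabi-Yau, so $(X,W)$ is a legitimate Landau-Ginzburg pair and a globally-defined holomorphic frame $u_1,\ldots,u_d$ of $TX$ exists. Since $X$ is a closed analytic submanifold of $\C^N$ it is Stein, so all the machinery of Sections \ref{sec:bulk}--\ref{sec:disk} applies.

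The bulk statement then follows immediately: because $X$ is Stein and holomorphically parallelizable with $\dim_\C Z_W=0$, Proposition \ref{prop:BulkPar} gives $\HPV(X,W)=\HPV^0(X,W)\simeq_{\O(X)}\Jac(X,W)$ together with the presentation $\Jac(X,W)=\O(X)/\langle u_1(W),\ldots,u_d(W)\rangle$ in terms of the chosen frame. For the D-brane categories, since $X$ is Stein, Theorem \ref{thm:BoundaryStein} yields the equivalence $\HDF(X,W)\simeq \HF(X,W)$, and Proposition \ref{prop:proj} yields $\HF(X,W)\simeq \HPF(X,W)$; composing these gives the chain $\HDF(X,W)\simeq \HF(X,W)\simeq \HPF(X,W)$. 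Finally, since $X$ is Stein and $\dim_\C Z_W=0$, Proposition \ref{prop:HPVSteinDiscrete} gives $\HPV(X,a)\simeq_{\O(X)}\Jac(X,W)\otimes_{\O(X)}\End_{\HF(X,W)}(a)$ for any holomorphic factorization $a$ of $W$.

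There is essentially no genuine obstacle here: every displayed assertion is an instance of a theorem or proposition already proved, and the only thing to check is that the hypotheses line up. The one point requiring a word is that I should not claim $\HF(X,W)\simeq \HF_\free(X,W)\simeq\HMF(X,W)$ in this generality — a generic analytic complete intersection need not have all holomorphic vector bundles topologically trivial (in contrast to the contractible-domain or non-compact-Riemann-surface cases), so Corollary \ref{cor:MF} does not apply, which is precisely why the statement is phrased with $\HPF(X,W)$ rather than $\HMF(X,W)$. The proof is therefore a short assembly:

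\begin{proof}
By Theorem \ref{thm:CI} (results of \cite{Fo}), $X$ is holomorphically parallelizable, hence Calabi-Yau, so that a globally-defined holomorphic frame $u_1,\ldots,u_d$ of $TX$ exists and $(X,W)$ is an LG pair. Being a closed analytic submanifold of $\C^N$, the manifold $X$ is Stein. Since $\dim_\C Z_W=0$ by hypothesis, Proposition \ref{prop:BulkPar} gives $\HPV(X,W)=\HPV^0(X,W)\simeq_{\O(X)}\Jac(X,W)$ and $\Jac(X,W)=\O(X)/\langle u_1(W),\ldots,u_d(W)\rangle$. The equivalence $\HDF(X,W)\simeq \HF(X,W)$ of $\Z_2$-graded $\O(X)$-linear categories follows from Theorem \ref{thm:BoundaryStein}, and $\HF(X,W)\simeq \HPF(X,W)$ follows from Proposition \ref{prop:proj}; composing these yields the stated chain of equivalences. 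Finally, since $X$ is Stein with $\dim_\C Z_W=0$, Proposition \ref{prop:HPVSteinDiscrete} gives $\HPV(X,a)\simeq_{\O(X)}\Jac(X,W)\otimes_{\O(X)}\End_{\HF(X,W)}(a)$ for any holomorphic factorization $a$ of $W$.~\qed
\end{proof}
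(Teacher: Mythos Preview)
Your proof is correct and follows essentially the same approach as the paper, which likewise deduces the result directly from Proposition \ref{prop:BulkPar}, Theorem \ref{thm:BoundaryStein} and Proposition \ref{prop:proj} together with the observation that an analytic complete intersection is Stein and holomorphically parallelizable. In fact your write-up is slightly more complete, since you explicitly invoke Proposition \ref{prop:HPVSteinDiscrete} for the disk-algebra statement, whereas the paper's proof omits that citation.
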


\begin{proof}
Follows immediately from Proposition \ref{prop:BulkPar},
Theorem \ref{thm:BoundaryStein} and Proposition \ref{prop:proj} 
using the observations made above. ~\qed
\end{proof}

\

\noindent We have $\O(X)\simeq \O(\C^N)/\langle f_1,\ldots,
f_{N-d}\rangle$. In particular, any superpotential $W\in \O(X)$ is the
restriction of a holomorphic function $\cW\in \O(\C^N)$ defined on the
ambient affine space. However, the critical locus $Z_W$ of $W=\cW|_X$
on $X$ need not\footnote{As a simple example, consider the linear
subspace $X$ defined by the single equation $x=0$ in $\C^2$ and the
holomorphic function $\cW(x,y)=xy$. Then $\cW$ has the origin as its
single critical point, but its restriction $W$ to $X$ vanishes, so
we have $Z_W=X$.}  coincide with the intersection with $X$ of the critical
locus $Z_\cW$ of $\cW$. Since $Z_\cW$ is defined by the
equation $\pd \cW=0$ on $\C^N$, while $Z_W$ is defined by the equation
$\pd W=\pd \cW|_{TX}=0$, we always have $Z_\cW\cap X\subseteq Z_W$,
but the inclusion can be strict. The critical locus of $W$ can be
determined as follows. The holomorphic tangent space to $X$ at a point
$x\in X$ equals $\cap_{i=1}^{N-d} \ker(\pd_x f_i)$. Thus $T_xX$
identifies with the space of solutions of the linear system:
\ben
\label{tsys}
(\pd_x f_1)(v)= \ldots =(\pd_x f_{N-d})(v)=0~~(v\in T_x \C^N)~~.
\een
Identifying $T_x\C^N$ with $\C^N$, we write $v=(v^1,\ldots, v^N)$.
Then \eqref{tsys} becomes a system of $N-d$ linear equations
for $N$ unknowns: 
\ben
\label{Xsys} 
\sum_{j=1}^N (\partial_j f_i)(x)v^j=0~~(i=1,\ldots, N-d)~~.  
\een 
Since $X$ is smooth, the matrix of this linear system has maximal
rank. Since $X$ is parallelizable, we have $TX\simeq \cO_X^{\oplus
d}$, so there exist globally-defined holomorphic functions $u_1,\ldots,
u_d:X\rightarrow \C^N$ such that $u_k(x)=(u_k^1(x),\ldots, u_k^N(x))$
(with $k=1,\ldots, d$) is a basis of solutions of \eqref{Xsys} for any
$x\in X$. We have:
\be
\pd W= \pd\cW|_{T X}=\sum_{k=1}^d \left(\sum_{j=1}^N
u^j_k \partial_j \cW \right) u^k~~, 
\ee
where $u^1,\ldots, u^d$ is the dual coframe of $T^\ast X$. Then
$Z_W$ is defined by the equations:
\ben
\label{Zci}
f_1=\ldots =f_{N-d}=\sum_{j=1}^N u^j_1 \partial_j \cW=\ldots=\sum_{j=1}^N u^j_d \partial_j \cW=0~~.
\een
For a generic choice of $\cW$, this system of $N$ equations in $\C^N$
has a zero-dimensional set of solutions, which we assume to be the
case from now on (technically, we are assuming that the $N$
holomorphic functions appearing in \eqref{Zci} form a regular
sequence, i.e. that $Z_W$ is a zero-dimensional complete intersection
in $\C^N$). Then Proposition \ref{ci} shows that $\HPV(X,W)$ is
concentrated in degree zero and isomorphic with the Jacobi algebra:
\be
\Jac(X,W)=\O(X)/\langle u_1(W),\ldots, u_d(W)\rangle\simeq \O(\C^N)
\Big/\big\langle f_1,\ldots, f_{N-d}\,,\,\sum_{j=1}^N u_1^j \pd_j \cW\,,\, \ldots\,, \,\sum_{j=1}^N u_d^j \pd_j \cW\big\rangle~~.
\ee
A globally-defined holomorphic frame of $TX$ can be difficult to
compute in practice, so it is often more convenient to find $r$
globally-defined holomorphic vector fields on $X$ (where $r\geq d$)
which generate the fibers of $TX$. These can be viewed as holomorphic
functions $v_\ell:X\rightarrow \C^N$ (where $\ell=1,\ldots, r$) such
that $v_\ell(x)=(v_\ell^1(x),\ldots, v_\ell^N(x))$ generate the space
of solutions of \eqref{Xsys} for all $x\in X$. Then the critical
ideal $\J(X,W)=\langle u_1(W),\ldots, u_d(W)\rangle \subset \O(X)$ of
$W$ (see Lemma \ref{lemma:par}) is generated by the elements $\pd_{v_\ell}\cW|_X=\sum_{j=1}^N
v^j_\ell \partial_j \cW\in \O(X)$ and the Jacobi algebra can be written
as:
\ben
\label{Jacv}
\Jac(X,W)\simeq \O(\C^N)\Big/\big\langle f_1,\ldots, f_{N-d}\,,\,\sum_{j=1}^N v_1^j \pd_j \cW\,,\, \ldots\,, \,\sum_{j=1}^N v_r^j \pd_j \cW\big\rangle~~,
\een
while $Z_W$ is defined by the equations: 
\ben
\label{Zcigen}
f_1=\ldots =f_{N-d}=\sum_{j=1}^N v^j_1 \partial_j \cW=\ldots=\sum_{j=1}^N v^j_r \partial_j \cW=0~~.
\een
A particularly simple presentation of the Jacobi algebra exists for
smooth hypersurfaces in $\C^N$, of which we will give an explicit
example below. In this case, we have:

\

\begin{Proposition}
\label{prop:3hyp}
Let $X$ be a non-singular hypersurface in $\C^N$ defined by the
equation $f=0$. Then the holomorphic tangent vector fields
$(v_{ij})_{1\leq i<j\leq N}$ defined on $X$ through:
\be
v_{ij}^{k}=(\partial_j f)\delta_{ik} -(\partial_i f) \delta_{jk}~~(k=1,\ldots, N)
\ee
(where $\delta$ is the Kronecker symbol) generate each fiber of
$TX$. Moreover, if $\cW\in \O(\C^N)$ is a holomorphic function, then
the critical locus $Z_W$ of the restriction $W\eqdef \cW|_X$ is
defined by the following system of equations in $\C^N$:
\beqan
\label{ZWSurface}
& f=0\nn\\
& \partial_i \cW\partial_j f -\partial_j \cW\partial_i f=0~~(1\leq i<j\leq N)~~.
\eeqan
If $W$ has isolated critical points on $X$, then we have:
\ben
\label{JacSurface}
\Jac(X,W)=\O(\C^N)/I~~,
\een
where the ideal $I$ of $\O(\C^N)$ is generated by $f$ and by the
holomorphic functions $\partial_i \cW\partial_j f -\partial_j
\cW\partial_i f$ with $1\leq i<j\leq N$.
\end{Proposition}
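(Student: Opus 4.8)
The plan is to establish the three assertions in order: first that the vector fields $v_{ij}$ generate the fibers of $TX$, then that this fact pins down simultaneously the critical set $Z_W$ and the critical ideal $\J(X,W)$, and finally to feed the description of $\J(X,W)$ into Proposition~\ref{prop:HJac} together with the presentation $\O(X)\simeq\O(\C^N)/\langle f\rangle$ of the hypersurface.

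First I would check that each $v_{ij}$ restricts to a holomorphic vector field on $X$ and that together they span every tangent space. Inside $T_x\C^N\cong\C^N$ one has $T_xX=\ker(\pd_x f)$ and $v_{ij}=(\partial_j f)\,e_i-(\partial_i f)\,e_j$, so $\pd_x f(v_{ij})=(\partial_j f)(\partial_i f)-(\partial_i f)(\partial_j f)=0$, i.e. $v_{ij}(x)\in T_xX$ for all $x\in X$. For spanning: fix $x\in X$; non-singularity of $X$ means $\pd_x f\neq 0$, so $(\partial_k f)(x)\neq 0$ for some $k$. The $N-1$ vectors $v_{ik}(x)$ with $i\neq k$ lie in $T_xX$, and deleting the $k$-th coordinate sends $v_{ik}(x)$ to $(\partial_k f)(x)\,e_i$; these are linearly independent in $\C^{N-1}$, so the $v_{ik}(x)$ are linearly independent, hence a basis of the $(N-1)$-dimensional space $T_xX$. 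Thus $(v_{ij})_{i<j}$ generates each fiber of $TX$.

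Next I would promote fiberwise generation to generation of the $\cO_X$-module $TX$: the cokernel of the morphism $\cO_X^{\oplus\binom{N}{2}}\to TX$ built from the $v_{ij}$ is coherent with all fibers zero, hence (Nakayama) all stalks zero, so the morphism is surjective. Composing with the defining surjection $\ioda_W\colon TX\twoheadrightarrow\cJ_W$ gives a surjection of coherent sheaves $\cO_X^{\oplus\binom{N}{2}}\twoheadrightarrow\cJ_W$ sending the $(i,j)$-th generator to $\ioda_W(v_{ij})=-\i\,v_{ij}(W)$. Since $X$ is Stein (a closed complex submanifold of $\C^N$), Cartan's Theorem~B (Theorem~\ref{thm:B}) kills $\rH^1$ of the kernel, so on global sections $\J(X,W)=\cJ_W(X)=\langle v_{ij}(W):i<j\rangle$, exactly as in the proof of Lemma~\ref{lemma:par} and the discussion around \eqref{Jacv}. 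A direct computation gives $v_{ij}(W)=\pd_{v_{ij}}\cW|_X=\big((\partial_j f)\partial_i\cW-(\partial_i f)\partial_j\cW\big)\big|_X=\big(\partial_i\cW\,\partial_j f-\partial_j\cW\,\partial_i f\big)\big|_X$. Moreover $x\in Z_W$ iff $x\in X$ and $\pd_x\cW$ vanishes on $T_xX$; since the $v_{ij}(x)$ span $T_xX$, this holds iff $\pd_x\cW(v_{ij}(x))=0$ for all $i<j$, which is exactly the system \eqref{ZWSurface}. Finally, assuming $W$ has isolated critical points, i.e. $\dim_\C Z_W=0$, Proposition~\ref{prop:HJac} yields $\Jac(X,W)\simeq\O(X)/\J(X,W)$, and lifting the generators $v_{ij}(W)$ to $\partial_i\cW\,\partial_j f-\partial_j\cW\,\partial_i f\in\O(\C^N)$ through $\O(X)\simeq\O(\C^N)/\langle f\rangle$ gives $\Jac(X,W)\simeq\O(\C^N)/I$ with $I=\langle f,\ \partial_i\cW\,\partial_j f-\partial_j\cW\,\partial_i f:1\le i<j\le N\rangle$, which is \eqref{JacSurface}.

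The one genuinely load-bearing step is the passage from ``the $v_{ij}$ generate the fibers of $TX$'' to ``the $v_{ij}(W)$ generate $\J(X,W)$ in $\O(X)$''; the rest is bookkeeping. That step is not hard with the available tools — coherence plus Nakayama for the module-level surjectivity, Cartan's Theorem~B for the global-sections surjectivity — but it is where care is needed, since fiberwise surjectivity of a sheaf map need not survive $\rGamma(X,-)$ without the Stein hypothesis.
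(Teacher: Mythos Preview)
Your proof is correct and follows essentially the same route as the paper: verify tangency, show the $v_{ij}$ span each fiber of $TX$, then invoke the general framework \eqref{Jacv}--\eqref{Zcigen} (which you spell out explicitly via Nakayama and Cartan~B, whereas the paper simply cites that earlier discussion). Your spanning argument---pick $k$ with $(\partial_k f)(x)\neq 0$ and observe that projecting away the $k$-th coordinate sends the $N-1$ vectors $v_{ik}(x)$ to $(\partial_k f)(x)\,e_i$---is cleaner than the paper's explicit $(N-1)\times(N-1)$ minor computation, but both reach the same conclusion.
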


\begin{proof}
It is clear that $v_{ij}$ are orthogonal to $(\partial_1f,
\ldots, \partial_Nf)$ and hence tangent to $X$. Let $A$ be the
$N(N-1)\times N$ matrix having $(v_{ij})_{1\leq i<j\leq N}$ and as
$(-v_{ij})_{1\leq i<j\leq N}$ its rows.  For each $1\leq i\leq N$,
this matrix has an $(N-1)\times (N-1)$ minor of the form\footnote{We
give the form of this minor for the case $2<i<N$; the cases $i=1$ and
$i=N$ are similar.}:
\be
\Delta_{i}=\left[\begin{array}{ccccccccc} 
\pd_i f & \dots & \dots & \dots  & 0 &\dots &\dots &0\\
0 & \pd_i f &\dots  & \dots & 0 &\dots & \dots & 0\\
\dots & \dots  & \dots &  \dots & \dots & \dots & \dots & \dots\\
0 & \dots  & \dots & \pd_i f & 0 &\dots &\dots &0\\

0 &\dots &\dots & \dots &  -\pd_i f & \dots & \dots & 0\\
0 &\dots &\dots & \dots & 0 & -\pd_i f & \dots & 0\\
\dots &\dots &\dots & \dots & \dots & \dots  & \dots & \dots\\
0 &\dots &\dots & \dots &0 & \dots & \dots & -\pd_i f
\end{array}\right]~~,
\ee
where the square block containing $\pd_if$ on the diagonal has size $i-1$
while the block containing $-\pd_if$ on the diagonal has size $N-i$.
The determinant of this minor is $\det
\Delta_{i}=(-1)^{N-i}(\pd_if)^{N-1}$.  Since $f$ is non-singular,
these minors cannot vanish simultaneously on $X$. Hence $A$ has rank
$N-1$ on $X$, thus $(v_{ij}(x))_{1\leq i<j\leq N}$ generate $T_xX$ for
all $x\in X$. Applying \eqref{Jacv} and \eqref{Zcigen} gives
\eqref{ZWSurface} and \eqref{JacSurface}.\,\qed
\end{proof}

Any holomorphic factorization $(E,D)$ of $\cW$ on the
ambient space $\C^N$ restricts to a holomorphic factorization
$(E|_X,D|_X)$ of $W$ on $X$. Since $\C^N$ is Stein and contractible,
the $\Z_2$-graded holomorphic vector bundle $E$ is necessarily
trivial, so this construction produces only topologically trivial
holomorphic factorizations of $W$ on $X$. In general, restrictions of
holomorphic factorizations defined on the ambient space do not recover
all holomorphic factorizations of $W$, since an analytic complete
intersection can support topologically non-trivial holomorphic
factorizations. Indeed, Theorem \ref{thm:CI} states that a Stein
manifold $X$ is parallelizable iff it admits an embedding as an
analytic complete intersection in some $\C^N$. In particular, the
complement $X$ of any central complex affine hyperplane arrangement
$\cA$ (see Subsection \ref{subsec:ha}) can be realized as an analytic
complete intersection. When $\rk \cA\geq 2$, the discussion of
Subsection \ref{subsec:ha} assures us that $\rH^2(X,\Z)\neq 0$, which
by the construction of Subsection \ref{subsec:nontriv} shows that
there exist analytic complete intersections which admit topologically
non-trivial holomorphic factorizations. An explicit holomorphic
factorization of this type is given in Example \ref{ex:ci} below.

\

\begin{example}
Consider the non-singular hypersurface $X$ defined in $\C^3$ by the
equation $f(x_1,x_2,x_3)=x_1e^{x_2}+x_2e^{x_3}+x_3e^{x_1}=0$. Let
$\cW\in \O(\C^3)$ be the holomorphic function given by
$\cW(x_1,x_2,x_3)=x_1^{n+1}+x_2x_3$ (where $n\geq 1$) and $W\in \O(X)$
be the restriction of $\cW$ to $X$. The critical locus of $\cW$
coincides with the origin of $\C^3$, which lies on $X$. As a
consequence, $Z_W$ contains the point $(0,0,0)\in X$. In this example,
we have:
\be
\pd_1 f=e^{x_2}+x_3 e^{x_1}~,~\pd_2 f=e^{x_3}+x_1 e^{x_2}~,~\pd_3 f=e^{x_1}+x_2 e^{x_3}~~.
\ee
The vector fields of Proposition \ref{prop:3hyp} are given by:
\beqan
\label{vk}
&&v_{23}=(0, \pd_3 f, -\pd_2 f)=\big(0, e^{x_1}+x_2 e^{x_3}, -e^{x_3}-x_1 e^{x_2} \big)~~\nn\\
&&v_{13}=(\pd_3 f, 0, -\pd_1 f)=\big(e^{x_1}+x_2 e^{x_3}, 0, -e^{x_2}-x_3 e^{x_1}\big)~~\\
&&v_{12}=(\pd_2 f, -\pd_1 f ,0)=\big(e^{x_3}+x_1 e^{x_2}, -e^{x_2}-x_3 e^{x_1},0 \big)~~\nn
\eeqan
and the defining equations \eqref{ZWSurface} of $Z_W$ take the form:
\beqan
\label{Zeq1}
& x_1 e^{x_2}+x_2 e^{x_3} +x_3 e^{x_1} =0~~\nn\\
& (n+1)x_1^n(e^{x_1}+x_2 e^{x_3})-x_2 (e^{x_2}+ x_3 e^{x_1})=0~~\nn\\
& x_3 (e^{x_1}+x_2 e^{x_3}) - x_2 (e^{x_3}+x_1 e^{x_2})=0~~\\
& (n+1) x_1^n (e^{x_3} +  x_1e^{x_2}) - x_3 (e^{x_2} + x_3 e^{x_1} )=0~~.\nn
\eeqan
The $\O(X)$-algebra $\HPV(X,W)$ is concentrated in degree zero and isomorphic
with the Jacobi algebra $\Jac(X,W)=\O(\C^3)/I$, where $I\subset \O(\C^3)$
is the ideal generated by the four holomorphic functions appearing in the
left hand side of \eqref{Zeq1}. Numerical study
shows that, for generic $n\geq 1$, the transcendental system
\eqref{Zeq1} admits solutions different from $x_1=x_2=x_3=0$, so the
restricted superpotential $W$ generally has critical points on $X$
which differ from the origin. Table \ref{table:critpoints} shows a few
such points for small values of $n$.

\begin{table}[H]
\centering
\begin{tabular}{|c|c|}
\toprule
$n$ & $x$  \\
\midrule\midrule
 1  & $~(0.512, -0.505,1.957)~,~(2.048, -2.114, 2.017)$ ~, \\
  & $~(0.450+0.985 \,\i, -0.241-0.613 \,\i, -0.848+0.747 \,\i )$~,~ $\ldots$  \\
 \hline
 2 & $( -0.435,  -0.109, 2.314)~,~( -0.385,  0.315,  0.207)~,~
(0.604,  -0.553,  1.960)$~,   \\
& $(-0.338-0.599\,\i, ~0.056+0.370\,\i,~ 0.050+0.678\,\i )$~,  $\ldots$  \\
\hline
 3 & $(0.658,  -0.583, 1.963)$~,~ \\
 & $(0.112-0.298\,\i, -0.075+0.122\,\i, -0.089+0.121\,\i )$~,  $\ldots$ \\
 \bottomrule
 \end{tabular}
 \caption{Some non-zero critical points of $W$ on $X$
 for low values of $n$ (4 significant digits).}
\label{table:critpoints}
  \end{table}

\noindent As explained above, any holomorphic factorization of $\cW$ on the
ambient space $\C^3$ induces by restriction a topologically trivial
holomorphic factorization of $W$ on $X$. For any $k\in\{0,\ldots,
n+1\}$, an example of holomorphic (in fact, algebraic) factorization
$(E,D_k)$ of $\cW$ on $\C^3$ is given by (see \cite{quiver})
$E^\0=E^\1=\cO_{\C^3}^{\oplus 2}$ with:
\be
D_k=\left[\begin{array}{cc} 
0 & b_k \\
a_k & 0
\end{array}\right]~~,~~\mathrm{where}~~ a_k=\left[\begin{array}{cc} 
x_2 &~ x_1^{n+1-k} \\
x_1^k & -x_3
\end{array}\right]~~\mathrm{and}~~b_k=\left[\begin{array}{cc} 
x_3 &~ x_1^{n+1-k} \\
x_1^k & -x_2
\end{array}\right]~~.
\ee
This induces a holomorphic factorization $(E|_X,D_k|_X)$ of
$W$ upon restriction to $X$.

\begin{remark}
For any $k\in \{1,2,3\}$, let $X_k$ be the open submanifold of $X$ defined by:
\be
X_k\eqdef \{x\in X \,|\, (\pd_k f)(x)\neq 0\}~~
\ee
and consider the vector fields $e_1^{(k)}$ and $e_2^{(k)}$ defined on $X_k$ through:
\beqa
&& e_1^{(1)}=\frac{v_{13}}{\pd_1 f}=(\frac{\pd_3f}{\pd_1f},0,-1)~~,~~
e_2^{(1)}=\frac{v_{12}}{\pd_1 f}=(\frac{\pd_2f}{\pd_1f},-1,0)~~,\nn\\
&& e_1^{(2)}=\frac{v_{23}}{\pd_2 f}=(0,\frac{\pd_3f}{\pd_2 f},-1)~~,~~
e_2^{(2)}=\frac{v_{12}}{\pd_2 f}=(1, -\frac{\pd_1 f}{\pd_2 f},0)~~,\nn\\
&&e_1^{(3)}=\frac{v_{13}}{\pd_3 f}=(1,0,-\frac{\pd_1 f}{\pd_3 f})~~,~~
e_2^{(3)}=\frac{v_{23}}{\pd_3 f}=(0,1,-\frac{\pd_2 f}{\pd_3 f})~~.\nn
\eeqa
Then $e_1^{(k)},e_2^{(k)}$ form a globally-defined holomorphic frame
of $TX_k$. Since each $X_k$ is parallelizable and Stein (being the
complement of an analytic hypersurface inside the parallelizable Stein
manifold $X$), the three pairs $(X_k,W_k)$ (where $W_k\eqdef
W|_{X_k}$) define restricted B-type Landau-Ginzburg models. Since $X$
is non-singular, the partial derivatives $\pd_k f$ cannot vanish
simultaneously on $X$, which implies that $X_1$, $X_2$ and $X_3$ give
an open cover of $X$. One can think of the model corresponding to
$(X,W)$ as being ``glued'' from these three ``smaller'' models. 
\end{remark}

\end{example}

\

\begin{example}
\label{ex:ci} 
Let $X\simeq (\C^\ast)^2$ be the complement of the $2$-dimensional
Boolean hyperplane arrangement (see Example \ref{ex:ba} of Subsection
\ref{subsec:ha}). As explained there, $X$ can be embedded in $\C^3$
as the analytic hypersurface given by the equation $x_1 x_2 x_3=1$ and
we have $\Pic_\alg(X)=0$ but $\Pic_\an(X)\simeq \rH^2(X,\Z)\simeq
\Z$. One of the two generators of $\rH^2(X,\Z)$ is realized by a
non-algebraic holomorphic vector bundle $L$ which has the property
that the circle bundle defined by any Hermitian metric on the
restriction of $L$ to the real 2-torus $T^2=\{(x_1,x_2)\in (\C^\ast)^2
\, \big| \, |x_1|=|x_2|=1\}\subset X$ generates the group
$\Prin_{\rU(1)}(T^2)\simeq \rH^2(T^2,\Z)$ of isomorphism classes of
principal $\rU(1)$-bundles on $T^2$. Then the opposite generator of
$\rH^2(X,\Z)$ is realized by the dual line bundle $L^{-1}$. The
construction of Subsection \ref{subsec:nontriv} shows that $X$
supports projective analytic factorizations which are not free.

To describe $L$ explicitly, notice that $X$ can be written as the
quotient $\C^2/\Z^2$, where $\Z^2$ acts on $\C^2$ by:
\be
(n_1,n_2)\cdot (z_1,z_2)\eqdef (z_1+n_1,z_2+n_2)~~,
~~\forall (z_1,z_2)\in \C^2~,~\forall (n_1,n_2)\in \Z^2~~.
\ee
The canonical surjection $\pi:\C^2\rightarrow X=(\C^\ast)^2$ is given explicitly by:
\be
x_1=e^{2\pi \i z_1}~~,~~x_2=e^{2\pi \i z_2}~~.
\ee
The restriction of $\pi$ to the two-dimensional real subspace
$V_0\subset \C^2$ defined by the equations $\Im z_1=\Im z_2=0$ induces
a bijection between $V_0/\Z^2$ and $T^2=\{(e^{2\pi \i
\theta_1},e^{2\pi \i \theta_2}) \,|\,\theta_1,\theta_2\in \R\}$; under this
bijection, $\theta_1$ and $\theta_2$ can be viewed as real coordinates
on $V_0$. Consider the surjective maps $\varphi^\pm:\C^2\rightarrow
\C$ given by:
\be
\varphi^\pm(z_1,z_2)=z_1\pm \i z_2~~,
\ee
which restrict to different bijections $\varphi^\pm_0$
between $V_0$ and $\C$:
\be
\varphi^\pm_0(\theta_1,\theta_2)=\theta_1\pm \i \theta_2~~.
\ee  
Consider the lattice $\Lambda:=\Z\oplus \i \Z\subset \C$. Then
$\varphi^\pm$ descend to well-defined surjections
$\bvarphi^\pm:X\rightarrow X_0$, where $X_0:=\C/\Lambda$ is an
elliptic curve of modulus $\tau=\i$. In turn, these maps restrict to
diffeomorphisms $\bvarphi^\pm_0:T^2\rightarrow X_0$ which allow us to
pull-back the complex structure of $X_0$ to two mutually opposite
complex structures on $T^2$ and hence to view the latter in two
different ways as an elliptic curve of modulus $\tau=\i$ which is
biholomorphic with $X_0$. Explicitly, the complex coordinate $z$ on the
covering space $\C$ of $X_0$ is given by $z=\varphi^\pm_0(\theta_1,
\theta_2)=\theta_1\pm \i \theta_2$ in terms of the real coordinates
$\theta_1$ and $\theta_2$ of the covering space $V_0$ of $T^2$. It is
easy to see that $\bvarphi^+$ and $\bvarphi^-$ are homotopy retractions
of $X$ onto $X_0$. Hence $\bvarphi^\pm$ induces two isomorphisms from
$\rH^2(X_0,\Z)$ to $\rH^2(X,\Z)$ which differ by sign. Since
$\bvarphi^\pm$ are holomorphic and $\rH^2(X,\Z)\simeq \Z$ classifies
holomorphic line bundles on $X$, it follows that the $\bvarphi^\pm$
pullback of any topologically nontrivial holomorphic line bundle
defined on $X_0$ is a topologically non-trivial holomorphic line
bundle defined on $X$. More precisely, pullback by $\bvarphi^\pm$
induces an isomorphism:
\be
\bvarphi^\pm:\NS(X_0)\rightarrow \rH^2(X,\Z)~~,
\ee
where $\NS(X_0)\eqdef
\Pic_\an(X_0)/\Pic_\an^0(X_0)=\Pic_\alg(X_0)/\Pic^0_\alg(X_0)\simeq
\rH^2(X_0,\Z)$ is the Neron-Severi group\footnote{Notice that
  $\Pic_\an(X_0)=\Pic_\alg(X_0)$ by the GAGA correspondence since
  $X_0$ is a projective variety.} of $X$.

Holomorphic line bundles $\cL$ on the elliptic curve $X_0$ are
classified by their factors of automorphy, which by the Appell-Humbert
theorem \cite{BL} are determined by pairs $(H,\chi)$, where $H$ is a
Hermitian pairing on $\C$ whose imaginary part takes integer values on
the lattice $\Lambda$ and $\chi$ is a semicharacter of $\Lambda$
relative to $H$. Let $p_0\in X_0$ be the $\mod\, \Lambda$ image of the
point $z_0=\frac{1+\i}{2}\in \C$. Then the holomorphic line bundle
$\cL=\cO_{X_0}(p_0)$ on $X_0$ has positive unit degree $\deg
\cL=\int_{X_0} c_1(\cL)=+1$ and is described by the data
$H(z,z')=z\overline{z'}$ and $\chi(n_1+\i n_2)=(-1)^{n_1n_2}$, where
we used the fact that the modulus of $X_0$ equals the imaginary
unit. Up to multiplication by a non-zero complex number, there exists
a unique holomorphic section of $\cO_{X_0}(p_0)$ which vanishes at
$p_0$. A conveniently normalized section $s_0$ is described by the
Riemann-Jacobi theta function (traditionally denoted $\vartheta_{00}$
or $\vartheta_3$) at modulus $\tau=\i$:
\be
\vartheta(z)=\vartheta_{00}(z)|_{\tau=\i}=\sum_{n\in \Z}e^{-\pi n^2+2\pi \i n z}~~,
\ee
which satisfies: 
\be
\vartheta(z+1)=\vartheta(z)~~,~~\vartheta(z\pm \i)=e^{\pi \mp 2\pi \i z} \vartheta(z)~~
\ee
and vanishes at the points $\frac{1+\i}{2}+\lambda$ with $\lambda\in \Lambda$. 
The class of $\cL$ modulo $\Pic^0_\an(X_0)$ generates the Neron-Severi
group of $X_0$. Hence the pullbacks $L_\pm:=(\bvarphi^\pm)^\ast(\cL)$
are holomorphic line bundles defined on $X$ such that each of
$c_1(L_+)$ and $c_1(L_-)$ generates the group $\rH^2(X,\Z)\simeq
\Z$. Since $\bvarphi^+$ and $\bvarphi^-$ differ by composition with
the map $(x_1,x_2)\rightarrow (x_1,x_2^{-1})$ (which induces an
orientation reversal of $T^2$), we in fact have $c_1(L_-)=-c_1(L_+)$
and hence $L_-\simeq L_+^{-1}$. Notice that the restriction of $L_+$
to $T^2$ has positive degree with respect to the complex structure
induced by the diffeomorphisms $\bvarphi_0^+$ while the restriction of
$L_-$ to $T^2$ has positive degree with respect to the opposite
complex structure, which is induced by the diffeomorphisms
$\bvarphi_0^-$. The $\bvarphi^\pm$-pullbacks of $s_0$ give global
holomorphic sections $s_\pm\in \rH^0(L_\pm)$. These are described by
the $\Z^2$-quasiperiodic holomorphic functions $f_\pm\in \O(\C^2)$
defined through:
\be
f_\pm(z_1,z_2)\eqdef \vartheta(z_1\pm \i z_2)=\sum_{n\in \Z}e^{-\pi n^2+2\pi \i n (z_1\pm \i z_2)}~~.
\ee
The zero divisors $D_\pm$ of $s_\pm$ on $X$ meet the 2-torus
$T^2\subset X$ at the single point given by $x_1=x_2=-1$,
which corresponds to the point $p_0$ of $X_0$ through each of the
diffeomorphisms $\bvarphi_0^\pm$. The tensor product $s_+\otimes
s_-\in \rH^0(L_+\otimes L_-)$ is described by the holomorphic function
$f\in \O(\C^2)$ given by:
\be
f(z_1,z_2)\eqdef f_+(z_1,z_2)f_-(z_1,z_2)~~, 
\ee
which satisfies: 
\be
f(z_1+1,z_2)=f(z_1,z_2)~~,~~f(z_1,z_2+1)=e^{2\pi +4\pi z_2} f(z_1,z_2)~~.
\ee
The isomorphism $L_+\otimes L_-\simeq \cO_X$ is realized on
$\Z^2$-factors of automorphy defined on $\C^2$ by the holomorphic
function $S:\C^2\rightarrow \C^\ast$ given by:
\be
S(z_1,z_2)\eqdef e^{-2\pi z_2^2}~~,
\ee
which satisfies $\frac{S(z_1,z_2)}{S(z_1,z_2+1)}=e^{2\pi +4\pi
  z_2}$. The section $s_+\otimes s_-\in \rH^0(L_+\otimes L_-)$
corresponds through this isomorphism to an element $W\in
\rH^0(\cO_X)=\O(X)$ whose lift to $\C^2$ is the
$\Z^2$-periodic function:
\be
\tilde{W}(z_1,z_2)\eqdef S(z_1,z_2)f(z_1,z_2)=e^{-2\pi z_2^2} \vartheta(z_1+\i z_2)\vartheta(z_1-\i z_2)~~.
\ee
Applying the construction of Subsection \ref{subsec:nontriv} with
$L:=L_+$, $L^{-1}\simeq L_-$ and $u=s_+$, $v=s_-$ gives a topologically
non-trivial elementary holomorphic factorization $(E,D)$ of $W$, where
$E=\cO_X\oplus L_+$ and $D=\left[\begin{array}{cc} 0 & s_-\\ s_+
    & 0\end{array} \right]$.
\end{example}

\

\begin{remark}
In \cite{EP}, Eisenbud and Peeva studied the infinite minimal free
resolution of finitely-generated modules $M$ over local complete
intersection rings $R=S/\langle f_1, \dots, f_d\rangle$, where $S$ is a regular
local ring and $f_1,\ldots, f_d$ is a regular sequence of elements of
$R$, showing that sufficiently high syzygies of $M$ are determined by
``higher matrix factorizations''.  This extends the correspondence of
\cite{Eisenbud} from local hypersurfaces to local complete
intersections. For an affine algebraic LG pair $(X_\alg,W_\alg)$
(namely $X_\alg$ is an affine manifold and $W_\alg$ is a regular
function defined on $X_\alg$), the result of \cite{Eisenbud} combines
\cite{Orlov} with the Buchweitz correspondence \cite{Buchweitz} to
give an equivalence between the category of {\em algebraic} matrix
factorizations of $W_\alg$ over $\C[X_\alg]$ and the category of
singularities of the quotient $\C[X_\alg]/\langle W_\alg\rangle$, where
$\C[X_\alg]$ is the affine coordinate ring of $X_\alg$. This setting
is quite different from our non-Noetherian situation, which concerns
holomorphic matrix factorizations of a single holomorphic function $W$
over a Stein manifold $X$. While this subject lies outside the scope
of the present paper, notice that the results of \cite{EP} could be
applied in our holomorphic setting when $X$ is a Stein manifold which
is locally an analytic complete intersection. However, relating the
category of holomorphic factorizations of a Stein LG pair $(X,W)$ to
an appropriate holomorphic version of the category of singularities of
$W$ seems to require an appropriate generalization of the
Buchweitz-Orlov correspondence (see \cite{Orlov,Buchweitz}) to our
non-Noetherian situation.
\end{remark}

\

\subsection{Complements of anticanonical divisors} 
\label{subsec:complements}

There is a simple way to produce many examples of Stein manifolds to
which our results can be applied.

\

\begin{Proposition}
\label{CY}
Let $Y$ be a non-singular complex projective Fano variety and $D$
be a smooth anticanonical divisor on $Y$. Then the complement
$X:=Y \setminus D$ is a non-compact Stein Calabi-Yau manifold.
\end{Proposition}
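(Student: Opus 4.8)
The plan is to establish the two required properties of $X := Y\setminus D$ separately: that $X$ is Stein, and that $X$ is Calabi-Yau (non-compact, Kählerian, with holomorphically trivial canonical bundle). Non-compactness of $X$ is immediate since $Y$ is a compact (projective) variety and $D$ is a nonempty hypersurface, so $X$ is a non-empty proper open subset of $Y$; in particular $X$ is Kählerian, being an open subset of the Kähler manifold $Y$.

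For the Stein property, the key input is that $D$ is an anticanonical divisor, hence an ample divisor on the Fano variety $Y$ (by definition of Fano, $-K_Y$ is ample, and $D\in |-K_Y|$). A classical theorem (due to Goodman, and in the analytic setting following from results of Takayama / the general fact that the complement of an ample divisor in a projective variety is affine) states that the complement of an ample divisor in a non-singular projective variety is a smooth affine variety; the associated analytic space is then Stein (every smooth affine variety over $\C$ is Stein, being a closed analytic submanifold of some $\C^N$). First I would invoke ampleness of $D$ to conclude $X_{\mathrm{alg}} = Y\setminus D$ is affine, then pass to the analytification to conclude $X$ is Stein. Alternatively, one may argue analytically: since $D$ is ample, a suitable multiple $mD$ is very ample and a section cutting out $mD$ gives, after removing $D$, a proper embedding of $X$ into affine space, exhibiting $X$ as a closed submanifold of $\C^N$ and hence Stein.

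For triviality of the canonical bundle, I would use the adjunction-type computation for complements: the canonical bundle of $X$ is $K_Y|_X$, and by definition $D$ is a divisor in the linear system $|-K_Y|$, so $\cO_Y(D)\simeq \cO_Y(-K_Y)=K_Y^{-1}$. Restricting to $X=Y\setminus D$, the line bundle $\cO_Y(D)|_X$ is holomorphically trivial (it is $\cO_Y(D)$ restricted to the complement of the divisor $D$, where its defining section is nowhere zero and trivializes it). Hence $K_X = K_Y|_X = \big(\cO_Y(D)|_X\big)^{-1}$ is holomorphically trivial as well, and $X$ is Calabi-Yau in the sense of the Definition in Section \ref{sec:LGpairs}.

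The main obstacle is ensuring the affineness/Stein step is stated with the correct hypotheses: one must genuinely use ampleness of $-K_Y$ (Fano), not merely effectivity of $D$, since the complement of a merely effective non-ample divisor need not be Stein. I expect the cleanest route is to cite the algebraic result that the complement of an ample divisor in a projective variety is affine, then use the standard fact that a smooth complex affine variety has Stein analytification; smoothness of $X$ is automatic since $D$ is smooth and $Y$ is non-singular, so $X$ is an open submanifold of the non-singular $Y$. The triviality-of-$K_X$ step and the non-compactness are routine once this is in place.
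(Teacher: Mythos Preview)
Your proposal is correct and follows essentially the same approach as the paper: the Stein property is obtained by observing that $D$ is ample (since $Y$ is Fano and $D\in|-K_Y|$), so $Y\setminus D$ is affine and hence Stein, while triviality of $K_X$ comes from $K_Y\simeq \cO_Y(-D)$ restricted to the complement of $D$. Your argument for the Calabi-Yau property is in fact slightly more direct than the paper's, which only asserts vanishing of $c_1(X)$ via ``the adjunction formula'' and implicitly appeals to the Oka--Grauert principle (Corollary~\ref{cor:SteinCY}) to upgrade this to holomorphic triviality of $K_X$; your explicit trivialization via the nowhere-vanishing defining section of $D$ avoids that extra step.
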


\begin{proof}
The vanishing of the first Chern class follows from the adjunction
formula. The Stein property follows since the complement of $D$
is affine and since (the analytification) of any affine
variety is Stein. \qed
\end{proof}

\

\noindent The Stein manifolds produced by Proposition \ref{CY} are
analytifications of non-singular affine varieties. Notice, however,
that the Landau-Ginzburg models considered here on such manifolds
differ from the algebraic Landau-Ginzburg models considered in
\cite{Orlov}. In particular, the superpotential $W$ is a holomorphic
(rather than a regular algebraic) function and our category of
topological D-branes is described by holomorphic (rather than
algebraic) matrix factorizations of $W$.

\

\begin{remark} 
Mirror symmetry for the A-model defined on complements $X$ of
anticanonical divisors was studied in \cite{Auroux}. By contrast, the
examples of this section concern the B-type Landau-Ginzburg model defined on
such complements.
\end{remark}

\

\begin{example}
Let $\P^d$ be the $d$-dimensional projective space with $d\geq 2$.  In
this case, we have $K_{\P^d}=\O_{\P^d}(-d-1)$. Let $Z$ be an
irreducible smooth hypersurface of degree $d+1$ in $\P^d$. Notice that
$Z$ defines an anticanonical divisor in $\P^d$ (which is a Fano
manifold). The complement $X=\P^d \setminus Z$ is a Stein Calabi-Yau
manifold. In this example, we have $\rH_1(X,\Z)=\Z_{d+1}$ (see
\cite[Proposition 4.1, page 61]{Dimca}).  By the universal coefficient
theorem \cite[Section 3.1]{hatcher} we conclude that the torsion part
of $\rH^2(X,\Z)$ is isomorphic with
$\Ext^1(\rH_1(X,\Z),\Z)=\Ext^1(\Z_{d+1},\Z) \simeq \Z_{d+1}$. Thus $X$
admits non-trivial holomorphic line bundles and, by the construction
of Subsection \ref{subsec:nontriv}, it also admits topologically
non-trivial elementary holomorphic factorizations.
\end{example}

\

\subsection{Total space of a holomorphic vector bundle}
\label{subsec:totspace}

Let $Y$ be a Stein manifold and $\pi: E \rightarrow Y$ be a
holomorphic vector bundle. Then the total space $X$ of $E$ is Stein 
(see \cite[Chap. V.1., Exercise 2]{FG}). 
The complex tangent bundle $\mathcal{T} X$ of $X$ is an extension:
\be
0 \rightarrow \pi^* E \rightarrow \mathcal{T} X \rightarrow \pi^*
\mathcal{T}Y \rightarrow 0~~,
\ee
hence the Chern polynomial $c_t(\mathcal{\cT} X)$ equals $\pi^* \left(
c_t(E)) \cdot \pi^\ast(c_t(\mathcal{T}Y) \right)$. The projection
$\pi$ is a homotopy equivalence since each fiber of $E$ is
contractible to a point; hence $\pi^\ast:\rH(Y,\Z)\rightarrow
\rH(X,\Z)$ is an isomorphism of groups. In particular, the first
Chern class of $X$ vanishes iff $c_1(E)+c_1(Y)=0$
(i.e. $c_1(E)=c_1(K_Y)$), in which case $X$ is a Stein Calabi-Yau
manifold.

\

\begin{remark}
\label{rem:Kpullback}
Suppose that $K_Y$ is non-trivial. Then the holomorphic
line bundle $L\eqdef \pi^\ast(K_Y)$ on $X$ is non-trivial with first
Chern class $c_1(L)=\pi^\ast(c_1(K_Y))$.
\end{remark}

\

\noindent A particular case of this
construction is obtained by taking $E$ to be the canonical line bundle
$K_Y$ of $Y$. In this case, the first Chern class of $X$ vanishes
automatically. For example, let $Y\subset \P^n$ be the complement
of an algebraic hypersurface $Z\subset \P^n$. Then the
total space $X$ of $K_Y$ is Stein and Calabi-Yau. 

\

\begin{example}
Consider the (non-anticanonical) hypersurface $Z=\{[x,y,z] \in \P^2 \,
| \, x^2+y^2+z^2=0 \}$ in $\P^2$. Then the complement $Y\eqdef
\P^2\setminus Z$ is Stein but not Calabi-Yau (see \cite{Fo}). Moreover, it
is shown in loc. cit that $\rH^2(Y,\Z)\simeq \Z_2$ and that
$c_1(Y)=\gamma$, where $\gamma$ is the generator of
$\rH^2(Y,\Z)$. Thus $K_Y$ is non-trivial with
$c_1(K_Y)=-\gamma$. Applying the construction above, consider the
total space $X$ of $K_Y$. Then $X$ is Stein and Calabi-Yau.  Moreover,
Remark \ref{rem:Kpullback} shows that the pull-back $L$ of $K_Y$ to
$X$ has non-trivial first Chern class. By the construction of
Subsection \ref{subsec:nontriv}, it follows that the $\Z_2$-graded
holomorphic vector bundle $E=\cO_X\oplus L$ supports holomorphic
factorizations of some non-zero function $W\in \O(X)$, whose
associated projective analytic factorization has non-free underlying
$\O(X)$-supermodule.
\end{example}

\

\section{Conclusions}
\label{sec:conclusions}

Since the mathematical formalism used in the present paper may be
unfamiliar to some readers, it may be useful to recall its physics
origins. It is well-known (see, for example \cite{LL,HV}) that
non-anomalous B-type topological Landau-Ginzburg (LG) models in two
dimensions can be defined for any non-constant holomorphic
superpotential $W:X\rightarrow \C$, where $X$ is a non-compact
K\"ahlerian manifold with trivial holomorphic line bundle \footnote{As
in \cite{HV}, the corresponding `untwisted' models need not be
scale-invariant, so they need not possess a $\mathrm{U(1)}_V$
R-symmetry. However, they do possess a non-anomalous $\mathrm{U(1)}_A$
$\rR$-symmetry.}.

For any such pair $(X,W)$, this was extended in \cite{LG1,LG2} (see
\cite{nlg1} for a mathematically rigorous treatment) to the
open-closed case, by constructing explicitly the boundary coupling of
B-type topological D-branes to the worldsheet bulk action, verifying
BRST invariance and studying the corresponding topological
correlators. As shown in \cite{LG2}, this physics construction allows
one to recover the abstract formalism of open-closed topological field
theories \cite{tft,MS} upon performing a ``partial localizaton'' of
the path integral, thus leading to the expressions which form the basis
of the rigorous treatment given in \cite{nlg1}.

In this paper, we considered the special case when $X$ is a Stein
manifold, showing how the basic objects of the open-closed topological
field theory simplify in this situation. Notice that $X$ need not be
(affine) algebraic.  Indeed, a Stein manifold is biholomorphic with
the vanishing set of a finite collection of holomorphic functions
defined on some complex affine space $\C^N$, but these functions need
not be polynomial. Even when $X$ is affine algebraic, there generally
is no reason (in models without scale invariance) to require $W$
to be a regular function (consider the case $X=\C^d$, with $W$ an entire
function which is not a polynomial function).

Our motivation for considering such models comes from the commonly
held belief (which originated with \cite{WittenMirror} and was later
extended in \cite{HV,Plesser} to the case of models which need not be
scale-invariant) that mirror symmetry can fruitfully be understood as
an equivalence between certain pairs of 2-dimensional topological
field theories. At least when $W$ is holomorphic Morse, it is
generally expected that the mirror of an LG model of the type
considered in \cite{nlg1} and in this paper should be the A-type
non-linear sigma model (NLSM) defined by a Fano manifold
$\breve{X}$. When $\breve{X}$ is toric, this A-type model admits a
linear sigma model description \cite{Witten} and the Abelian duality
argument of \cite{Plesser} applied to the latter shows that the B-type
LG model defined by $(X,W)$ is of the special kind considered in
\cite{HV} (see \cite{CO} for a mathematically rigorous
construction). The situation is much less clear when $\breve{X}$ is
Fano but not toric, since the A-type NLSM defined by $\breve{X}$ does
not immediately admit a toric description\footnote{Though a
description as a complete intersection of hypersurfaces in a toric
variety may sometimes be possible.}. In this case, the mirror B-type
LG model should belong to the general class considered in
\cite{LG1,LG2,nlg1,tserre}. Notice that toric Fano varieties are meager among
all Fano varieties, just like the special models of \cite{HV} are
meager among general B-type LG models.

While the impetus for developing the general theory of B-type LG
models was that of going beyond the linear sigma model framework, it
is natural to ask whether the approach pursued in \cite{nlg1} and in
this paper can shed light on some aspects of the latter, such as
giving a precise effective description to the ``hybrid models'' of
\cite{Witten}. It is likely that a connection to hybrid models does
exist in the more general setting of \cite{nlg1}, which allows the
critical set $Z_W$ to be compact but non-discrete. However,
hybrid models with finite-dimensional on-shell state spaces cannot fit
the more restrictive setting of the present paper for the following
reason. Finite-dimensionality of the on-shell bulk and boundary state
spaces of the general B-type LG models discussed in
\cite{LG1,LG2,nlg1,tserre} requires $Z_W$ to be compact. In the setting of
the present paper, compactness of $Z_W$ and the Stein condition on $X$
imply that $Z_W$ must be finite (see Remark \ref{rem:SteinFinite}). As
a result, the classical vacua of the topological LG model form an
isolated set, which implies that Stein LG models cannot correspond to
the ``hybrid models'' of \cite{Witten}, if one requires
finite-dimensionality of on-shell state spaces.

\

\appendix

\section{Stein manifolds}
\label{app:Stein}

\noindent In this Appendix, we recall the definition and some
properties of Stein manifolds. For more information on Stein
manifolds and Stein spaces we refer the reader to
\cite{GR,OkaGrauert,Oka,BS,Fo,FG,FF,HW}.

\

\noindent Let $X$ be a complex manifold. We say that {\em holomorphic
  functions separate points of $X$} if for every pair of distinct
points $x\neq y$ in $X$, there exists a holomorphic function $f \in
\O(X)$ such that $f(x)\neq f(y)$. The {\em holomorphic (or analytic)
  hull} of a compact subset $K\subset X$ is defined through:
\be
\hat{K}_{\O(X)}\eqdef \{x\in X| \forall f\in \O(X)~:~|f(x)|\leq \max_{y\in K}|f(y)|\}~~,
\ee 
where $\O(X)=\cO_X(X)$ is the $\C$-algebra of complex-valued
holomorphic functions defined on $X$.

\

\begin{Definition} {\rm \cite{FF}} 
A complex manifold $X$ is called {\it holomorphically convex} if
$\hat{K}_{\O(X)}$ is compact for any compact subset $K\subset X$.
\end{Definition}

\vspace{3mm}

\begin{Definition} {\rm \cite{FF}} 
Let $X$ be a complex manifold with $\dim_\C X=d$. We say that $X$ is
{\em Stein} if the following three conditions are satisfied:
\begin{enumerate}[(1)]
\itemsep 0.0em
\item Holomorphic functions separate points of $X$.
\item $X$ is holomorphically convex.
\item For every point $x \in X$ there exist globally-defined
  holomorphic functions $f_1,...,f_d \in \O(X)$ whose differentials
  $\pd f_j$ are linearly independent at $x$ over $\C$.
\end{enumerate} 
\end{Definition}

\vspace{3mm}

\begin{Definition} 
A complex manifold $X$ is called {\em $K$-complete} if for every point
$x\in X$ there exists a holomorphic map $f: X \to \C^N$ for some $N$
such that $x$ is an isolated point of the fiber $f^{-1}(f(x))$.
\end{Definition}

\vspace{3mm}

\begin{Theorem}
A complex manifold $X$ is Stein iff it satisfies the following
properties:
\begin{enumerate}[(1)]
\itemsep 0.0em
\item Holomorphic functions separate points on $X$.
\item $X$ is holomorphically convex.
\item $X$ is $K$-complete.
\end{enumerate}
\end{Theorem}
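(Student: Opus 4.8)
The plan is to prove the two implications separately. The forward implication --- that a Stein manifold satisfies (1)--(3) --- is elementary and reduces to the holomorphic inverse function theorem; the converse is the substantial direction and rests on Grauert's solution of the Levi problem.

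First I would treat the forward direction. Suppose $X$ is Stein in the sense of the definition above, with $\dim_\C X=d$. Then (1) and (2) hold by definition, so only $K$-completeness needs to be checked. Fix $x\in X$ and apply condition (3) of the definition to obtain $f_1,\dots,f_d\in\O(X)$ whose differentials $\pd f_1,\dots,\pd f_d$ are $\C$-linearly independent at $x$. Consider $f\eqdef(f_1,\dots,f_d)\colon X\to\C^d$; its differential $\dd_xf\colon T_xX\to\C^d$ is a linear isomorphism, so by the holomorphic inverse function theorem $f$ restricts to a biholomorphism from a neighbourhood of $x$ onto a neighbourhood of $f(x)$. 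In particular $x$ is an isolated point of the fibre $f^{-1}(f(x))$, which is exactly $K$-completeness (with $N=d$).

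For the converse, assume $X$ satisfies (1), (2), (3) and set $\dim_\C X=d$. Condition (3) says that $X$ is holomorphically spreadable, and (2) is holomorphic convexity. By Grauert's characterization of Stein spaces (equivalently, the solution of the Levi problem, see \cite{GR}), a holomorphically convex complex space admits a proper Remmert reduction $\rho\colon X\to R$ onto a Stein space $R$ with $\rho^\ast\colon\O(R)\stackrel{\sim}{\rightarrow}\O(X)$, whose fibres are the connected analytic subsets of $X$ along which every global holomorphic function is constant. By (3) (or directly by (1)) each such fibre is a single point: if $g\colon X\to\C^N$ has $x$ isolated in $g^{-1}(g(x))$, then the connected $\rho$-fibre through $x$, being contained in $g^{-1}(g(x))$, equals $\{x\}$. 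Hence $\rho$ is a proper bijection onto the normal space $R$, so it is a biholomorphism, and $X$ is a Stein space. It then remains to recover condition (3) of the definition of a Stein \emph{manifold} from smoothness: choose local coordinates $z^1,\dots,z^d$ centred at $x$, so that their classes form a basis of the $d$-dimensional space $\mathfrak{m}_x/\mathfrak{m}_x^2$ (where $\mathfrak{m}_x\subset\cO_{X,x}$ is the maximal ideal); applying Cartan's Theorems A and B for the Stein space $X$ to the coherent ideal sheaves $\cI_x$ and $\cI_x^2$, the evaluation map $\O(X)\to\mathfrak{m}_x/\mathfrak{m}_x^2$ is surjective, so one may pick $f_1,\dots,f_d\in\O(X)$ mapping to this basis. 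Their differentials $\pd f_1,\dots,\pd f_d$ are then linearly independent at $x$, which is precisely condition (3).

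The only non-elementary input, and hence the main obstacle, is Grauert's theorem used in the converse: extracting from holomorphic convexity and spreadability alone enough global holomorphic functions to exhibit $X$ as a Stein space. Everything downstream --- the bijectivity and biholomorphy of the Remmert reduction, and the passage from "smooth Stein space" to the definition's condition (3) via Cartan A/B --- is routine coherent-sheaf bookkeeping, and the forward direction is just the inverse function theorem. I would therefore state the converse as a consequence of \cite{GR} and spend the written proof on the two routine ends.
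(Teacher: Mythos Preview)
The paper does not supply its own proof of this statement: it appears in Appendix~\ref{app:Stein} as one of several classical facts about Stein manifolds that are recalled without argument, with the reader referred to the standard sources \cite{GR,FG,FF}. There is therefore nothing to compare against at the level of proof strategy.

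Your sketch is correct and follows the standard line. The forward implication is exactly as you say: condition (3) of the definition gives, at each point, a holomorphic map to $\C^d$ that is a local biholomorphism there, so the point is isolated in its fibre. For the converse, invoking the Remmert reduction (equivalently, Grauert's solution of the Levi problem) is the expected move, and your observation that condition (1) alone already forces the reduction fibres to be singletons is right; $K$-completeness is in fact redundant here, as Grauert showed that (1) and (2) already imply the full Stein property. The final step---recovering global functions with independent differentials at a given smooth point via Cartan's theorems applied to $\cI_x/\cI_x^2$---is the standard argument and is fine as stated. One small point of phrasing: you call $R$ a ``normal space'' when concluding that the proper bijection $\rho$ is a biholomorphism; what you actually need is that $X$ is a manifold (hence normal), so that the holomorphic bijection $\rho$ from a normal space onto the Stein space $R$ is an isomorphism---or, more directly, that a proper bijective holomorphic map between complex manifolds of the same dimension is a biholomorphism.
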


\vspace{3mm}

\begin{Theorem}
A complex manifold is Stein iff it is biholomorphic to a {\em closed}
complex submanifold of $\C^N$ for some $N$.
\end{Theorem}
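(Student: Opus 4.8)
The plan is to prove the two implications separately. The direction ``$X$ biholomorphic to a closed complex submanifold of $\C^N$ $\Rightarrow$ $X$ Stein'' is an elementary verification, so I would dispatch it first. Identify $X$ with such a submanifold. The restrictions to $X$ of the affine coordinates $z^1,\dots,z^N$ lie in $\O(X)$, separate points of $X$, and at each $x\in X$ some $d=\dim_\C X$ of their differentials are $\C$-linearly independent (because $T_xX$ injects into $T_x\C^N=\C^N$); this gives conditions (1) and (3) in the definition of a Stein manifold. For holomorphic convexity, any $f\in\O(\C^N)$ restricts to an element of $\O(X)$, so for compact $K\subset X$ one has $\widehat{K}_{\O(X)}\subseteq \widehat{K}_{\O(\C^N)}\cap X$; the right-hand side is bounded (it sits inside any polydisc containing $K$) and closed in $\C^N$ while being contained in the closed set $X$, hence compact, so $\widehat{K}_{\O(X)}$ is compact. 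Equivalently, the inclusion $X\hookrightarrow\C^N$ is a proper injective holomorphic map, so every point of $X$ is the only point of its fiber and $X$ is $K$-complete; combined with (1) and the holomorphic convexity just checked, the previous theorem yields that $X$ is Stein.

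For the converse, assume $X$ is Stein with $\dim_\C X=d$. The goal is to construct a proper injective holomorphic immersion of $X$ into some $\C^M$; once such a map exists, its image is a closed submanifold (closedness being exactly properness), the map is a biholomorphism onto it, and a generic linear projection $\C^M\to\C^{2d+1}$ preserves properness, injectivity and the immersion property on the image, so one lands in $\C^{2d+1}$. To build the map I would invoke Cartan's Theorem~B (Theorem~\ref{thm:B}): applied to the ideal sheaves of pairs of distinct points and to the square of the maximal ideal sheaf at a point, it shows that on any prescribed holomorphically convex compact $K\subset X$ finitely many functions from $\O(X)$ can be chosen so as to separate all pairs of points of $K$ and to be immersive at every point of $K$ (the pointwise statements are already among the Stein axioms; Theorem~B is what lets a \emph{single finite family} work uniformly on all of $K$). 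Exhausting $X$ by such compacts $K_1\subset K_2\subset\cdots$ and assembling the resulting functions — together with auxiliary functions forced to grow along the exhaustion, which secures properness — produces, via a normal-families (Baire category) argument in the spirit of Bishop or via Narasimhan's more direct construction, the desired proper injective holomorphic immersion $X\to\C^M$.

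The main obstacle is this converse. Getting point separation and immersivity from \emph{globally defined} holomorphic functions, uniformly on compacts, is exactly what forces the use of Cartan's Theorem~B; arranging properness so that the image is closed rather than merely locally closed is a second genuine difficulty; and the transversality argument that cuts the ambient dimension down to $2d+1$ while preserving injectivity and properness is a third. In an appendix of this nature one might reasonably content oneself with the elementary forward implication and cite Remmert, Bishop and Narasimhan for the converse.
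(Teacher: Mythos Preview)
The paper does not actually prove this theorem: it is stated without proof in the expository appendix on Stein manifolds, as one of several classical results quoted from the references \cite{GR,FF}. So there is no ``paper's own proof'' to compare against.

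That said, your outline is correct and follows the standard route found in those references. The forward implication is verified exactly as you say (restricted coordinate functions give separation and local coordinates; holomorphic convexity follows from $\widehat{K}_{\O(X)}\subset\widehat{K}_{\O(\C^N)}\cap X$ with the right-hand side bounded and closed). For the converse, your plan---use Theorem~B to produce, on each holomorphically convex compact, finitely many global functions that separate points and give an immersion, then exhaust and ensure properness, finally project generically to $\C^{2d+1}$---is precisely the Remmert--Bishop--Narasimhan argument. Your closing remark is apt: in an appendix of this kind one states the result and cites the literature, which is exactly what the paper does.
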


\vspace{3mm}

\begin{Theorem} {\rm \cite[page 337]{FF}}
Every Stein manifold of dimension $d>1$ admits a proper holomorphic
embedding into $\C^{N_d}$ for $N_d=\big[\frac{3d}{2}\big]+1$ and a
proper holomorphic immersion into $\C^{M_d}$ for
$M_d=\big[\frac{3d+1}{2}\big]$.
\end{Theorem}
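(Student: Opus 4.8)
The plan is to derive both statements from the Oka principle. By the preceding theorem $X$ admits a proper holomorphic embedding $\iota\colon X\hookrightarrow\C^N$ for some (possibly very large) $N$, and the task is to bring the target dimension down to $N_d$ for an embedding, resp. $M_d$ for an immersion.

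It is instructive to first see why linear projections do not suffice. Composing $\iota$ with a generic linear projection $\pi_a\colon\C^N\to\C^{N-1}$ along a line $\C a$ keeps the image a closed submanifold and keeps the map an immersion unless $a$ meets the image of the projectivised tangent bundle $\P(TX)$, which has complex dimension $2d-1$; it keeps the map injective unless $a$ is parallel to a secant of $\iota(X)$, and the secant set has complex dimension $2d$. So a generic $a$ works whenever $N-1>2d-1$ (for an immersion) or $N-1>2d$ (for an embedding); iterating reduces $N$ only to $2d$, resp. $2d+1$. Getting below $2d$ cannot be done by projecting along a fixed direction, and this is where the h-principle enters.

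The key reformulation is that the space of proper holomorphic immersions (resp. proper injective holomorphic immersions) $X\to\C^N$ is non-empty as soon as the corresponding space of \emph{formal} (continuous) solutions is non-empty, by Gromov's Oka principle for sections of stratified elliptic submersions, in the form due to Forstneric. A formal immersion is a continuous map $f\colon X\to\C^N$ together with a fibrewise-injective bundle map $TX\to X\times\C^N$; since $X$ is Stein of complex dimension $d$ it has the homotopy type of a CW complex of real dimension at most $d$ (Andreotti--Frankel, Milnor), and a generic section of the associated bundle with fibre the space $\mathrm{Mono}(\C^d,\C^N)$ of injective linear maps --- which is $2(N-d)$-connected --- exists whenever the base dimension does not exceed $2(N-d)+1$; this obstruction-theoretic count, refined by an analysis of the non-immersive jet strata, yields the stated bound $M_d$. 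For an embedding one must in addition separate the two points of each pair in $X\times X\setminus\Delta$ and the two sheets at every would-be double point; carrying out the analogous count over the $2d$-complex-dimensional double space yields the slightly larger bound $N_d$. Once a formal solution is in hand, the Oka principle deforms it to a genuine holomorphic immersion, resp. embedding, $X\to\C^N$; properness is obtained either by keeping two of the coordinates of $\iota$ and using the interpolation version of the principle, or by a concluding generic perturbation, neither of which costs extra dimensions.

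The main obstacle is not the dimension bookkeeping but the verification of the hypotheses of the Oka principle for the submersions at hand --- in particular the stratified ellipticity of the complement of the non-immersive (resp. non-injective) jet locus --- together with the analysis on $X\times X$ that produces the point-separating coordinate while preserving properness. This technical core is precisely the content of the Eliashberg--Gromov--Sch\"urmann theorem, carried out in \cite{FF}.
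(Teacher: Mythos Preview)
The paper does not contain a proof of this theorem: it is stated in Appendix~\ref{app:Stein} purely as a quoted result from \cite[page 337]{FF}, with no argument given. There is therefore nothing in the paper to compare your proposal against.

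That said, your sketch is a reasonable outline of the Eliashberg--Gromov (immersion) and Sch\"urmann (embedding) arguments as presented in \cite{FF}: the failure of naive linear projection below $2d$, the reformulation via formal solutions and the Oka principle for stratified elliptic submersions, and the obstruction-theoretic dimension count are the correct ingredients. The one caveat is that your write-up is a high-level narrative rather than a proof; the actual work --- the stratified ellipticity of the relevant jet complements, the double-point analysis, and the interpolation/properness step --- is deferred entirely to \cite{FF}, which is also exactly what the paper does. So in effect you and the paper agree: this is a deep result whose proof lives in the cited reference, not here.
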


\vspace{3mm}

\begin{remark} 
The $\C$-algebra $\O(X)$ of complex-valued holomorphic functions
defined on a Stein manifold $X$ need not be finitely-generated. It is
known \cite{HW} that $\O(X)$ is finitely-generated iff $X$ can be
embedded as a closed {\em polynomially} convex subset of $\C^N$ for
some $N$.
\end{remark}

\subsection{Cartan-Serre theorems and the Oka-Grauert principle} 

\

\

\begin{Theorem}[Cartan] {\rm \cite[page 124]{GR}}
\label{thm:B}
For every coherent analytic sheaf ${\cal F}$ on a Stein manifold $X$, 
the following statements hold:
\begin{enumerate}[A.]
\item For any $x\in X$, the stalk ${\cal F}_x$ is generated as an
  $\cO_{X,x}$-module by global sections of ${\cal F}$.
\item $\rH^i(X,{\cal F})=0$ for all $i>0$.
\end{enumerate}
\end{Theorem}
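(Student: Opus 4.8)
The plan is to prove part B (the vanishing of higher cohomology) first and then extract part A from it by a formal Nakayama argument. Since $X$ is Stein, the embedding theorem recalled above realizes $X$ as a closed complex submanifold $j\colon X\hookrightarrow \C^N$. A coherent analytic sheaf $\cF$ on $X$ extends to the coherent sheaf $j_\ast\cF$ on $\C^N$ (coherent by Oka's coherence theorem together with coherence of the ideal sheaf of $X$), and for a closed embedding one has $\rH^i(X,\cF)\cong \rH^i(\C^N,j_\ast\cF)$ for all $i$, as well as the same global sections. Hence it suffices to prove Theorems A and B for coherent sheaves on open subsets of $\C^N$, and, after a further reduction, on neighbourhoods of compact ``cubes'' (products of closed squares) and on increasing unions of these.

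First I would establish both statements for a coherent sheaf $\cF$ defined on a neighbourhood of a compact polydisc $Q\subset\C^N$. The one genuinely non-linear input here is \emph{Cartan's lemma on holomorphic matrices}: a $\GL_n(\C)$-valued holomorphic function defined near the overlap $Q'\cap Q''$ of two adjacent cubes and sufficiently close to the identity there factors as $A'\,(A'')^{-1}$ with $A'$ and $A''$ holomorphic and invertible near $Q'$ and $Q''$ respectively; one proves this by solving the additive (Cousin I) splitting problem across the separating hyperplane via a Cauchy integral and then running a quadratically convergent iteration to pass from the additive to the multiplicative statement. Combining Cartan's lemma with Oka coherence (so that $\cF$ has, near $Q$, a finite free presentation) and inducting on the length of a free resolution, one shows: (i) if $\cF$ is generated by global sections over $Q'$ and over $Q''$, then it is generated by global sections over $Q'\cup Q''$; and (ii) $\rH^{\geq 1}$ of $\cF$ over $Q'\cup Q''$ vanishes provided it vanishes over $Q'$, $Q''$ and $Q'\cap Q''$. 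Subdividing a large cube into small cubes on which $\cF$ is free and gluing them two at a time yields Theorems A and B over every compact cube, and, with the same patching, over every analytic polyhedron.

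Next I would globalise by exhausting $X$ by an increasing sequence of relatively compact analytic polyhedra $X_1\Subset X_2\Subset\cdots$ with $\bigcup_k X_k=X$, which exists because $X$ is holomorphically convex and $K$-complete. On each $X_k$ one already has $\rH^i(X_k,\cF)=0$ for $i\geq 1$ and surjectivity onto every stalk, so the remaining task is to pass to the limit. For $i\geq 2$ this is immediate from the Milnor exact sequence $0\to{\varprojlim_k}^1\rH^{i-1}(X_k,\cF)\to \rH^i(X,\cF)\to\varprojlim_k\rH^i(X_k,\cF)\to 0$; for $i=1$ the same sequence reduces the claim to ${\varprojlim_k}^1\rH^0(X_k,\cF)=0$, i.e. to the Runge-type statement that sections of $\cF$ over $X_k$ are dense, in the topology of uniform convergence on compacts, in sections over $X_{k-1}$ — itself a consequence of Theorems A and B on analytic polyhedra together with holomorphic convexity of the $X_k$ (Oka--Weil). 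Finally, part A on all of $X$ follows from part B: for $x\in X$ with maximal ideal sheaf $\mathfrak m_x\subset\cO_X$, the exact sequence $0\to\mathfrak m_x\cF\to\cF\to\cF/\mathfrak m_x\cF\to 0$ of coherent sheaves gives, via $\rH^1(X,\mathfrak m_x\cF)=0$, surjectivity of $\rH^0(X,\cF)\to\cF_x/\mathfrak m_x\cF_x$, whence Nakayama's lemma shows that global sections generate $\cF_x$ over $\cO_{X,x}$.

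The main obstacle is the cube-patching step, and inside it Cartan's lemma on holomorphic matrices: it is the sole non-abelian ingredient (a factorization statement for $\GL_n$-valued cocycles rather than for additive ones), and the inductive reduction of a general coherent sheaf to this matrix statement through local free resolutions requires careful bookkeeping of how generators and relations behave under restriction to smaller cubes. The globalisation is comparatively soft, though it still rests on the non-trivial approximation theorem on analytic polyhedra, which is itself a corollary of the compact case.
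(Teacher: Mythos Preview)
The paper does not prove this theorem at all: it is stated in the appendix as a classical result with a reference to Grauert--Remmert, and is used throughout the body as a black box. So there is no in-paper proof to compare against, and I can only assess your outline on its own terms.

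Your sketch follows the classical Cartan--Serre architecture (Cartan's matrix lemma, local free resolutions, cube-patching, exhaustion by analytic polyhedra, a Mittag-Leffler/${\varprojlim}^1$ passage, and Nakayama to get A from B), and those pieces are correct and fit together as you describe. There is, however, a genuine circularity in your very first step. You invoke the embedding theorem to realize $X$ as a closed submanifold of $\C^N$ and then reduce everything to coherent sheaves on $\C^N$. But the hard direction of that embedding theorem (Stein $\Rightarrow$ properly embeddable in some $\C^N$) is itself proved using Theorem~A: one needs an ample supply of global holomorphic functions separating points and tangents, and that supply is exactly what Theorem~A guarantees. The paper's appendix lists the embedding characterization before Cartan's theorems, but that is a summary of results, not a logical ordering; in Grauert--Remmert the embedding theorem comes after, and rests on, Theorems~A and~B.

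The fix is to drop the global embedding and work intrinsically, which is what the classical proof actually does. A Stein manifold admits an exhaustion by relatively compact analytic polyhedra $X_k=\{|f_1|<1,\ldots,|f_m|<1\}$; each such $X_k$ maps, via $(f_1,\ldots,f_m)$ together with local coordinates, as a closed submanifold into a polydisc in some $\C^M$. This local embedding of analytic polyhedra is elementary (no Theorem~A needed), and it is all you require to transport the cube-patching argument from $\C^N$ to $X$. With that modification your outline is sound; without it, the argument as written assumes what it is trying to prove.
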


\vspace{3mm}

\noindent Since $\rH^{i,j}(X)=\rH^j(X,\wedge^i TX)$, this implies:

\

\begin{Corollary}
On any Stein manifold $X$ the Dolbeault cohomology groups 
$\rH^{i,j}(X)$ vanish for all $i\geq 0$ and $j\geq 1$.
\end{Corollary}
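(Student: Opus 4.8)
The plan is to obtain this as an immediate consequence of part B of Cartan's theorem (Theorem \ref{thm:B}) together with the Dolbeault isomorphism. The one observation needed is that, for every $i\geq 0$, the exterior power $\wedge^i TX$ of the holomorphic tangent bundle is a holomorphic vector bundle of finite rank, hence its locally free sheaf of holomorphic sections is a coherent analytic sheaf on $X$.

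First I would note that, since $X$ is Stein, part B of Theorem \ref{thm:B} applied to the coherent sheaf $\wedge^i TX$ gives $\rH^j(X,\wedge^i TX)=0$ for all $j\geq 1$. Next I would invoke the Dolbeault resolution
\be
0\to \wedge^i TX \stackrel{\bbpd}{\to} \cA^1\otimes \wedge^i TX \stackrel{\bbpd}{\to}\cdots\stackrel{\bbpd}{\to}\cA^d\otimes\wedge^i TX\to 0~~,
\ee
whose exactness is the $\bpd$-Poincar\'e lemma and whose terms are fine (hence soft and acyclic) sheaves of $\cO_X$-modules. This computes the sheaf cohomology $\rH^j(X,\wedge^i TX)$ as the Dolbeault cohomology $\rH^{i,j}(X)=\rH^j_{\bbpd}(\cA(X,\wedge^i TX))$, which is precisely the identification $\rH^{i,j}(X)=\rH^j(X,\wedge^i TX)$ recorded just before the statement. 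Combining the two facts yields $\rH^{i,j}(X)=0$ for all $i\geq 0$ and $j\geq 1$.

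There is essentially no obstacle here: the result is a one-line corollary of Theorem \ref{thm:B}. The only point deserving mild attention is the indexing convention adopted in this paper, in which the first index of $\rH^{i,j}(X)$ refers to the exterior power of the holomorphic \emph{tangent} bundle $TX$ (polyvector degree) rather than of the cotangent bundle; but $\wedge^i TX$ is coherent for every $i$, so part B of Theorem \ref{thm:B} applies verbatim and this convention plays no role in the vanishing.
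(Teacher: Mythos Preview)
Your proposal is correct and matches the paper's approach exactly: the paper simply records the identification $\rH^{i,j}(X)=\rH^j(X,\wedge^i TX)$ immediately before the Corollary and deduces it from part B of Theorem~\ref{thm:B}. Your additional remarks on coherence of $\wedge^i TX$ and on the Dolbeault resolution merely spell out what the paper leaves implicit.
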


\

\noindent The following result is known as the ``Oka-Grauert principle'':

\

\begin{Theorem} {\rm \cite{OkaGrauert}}
The holomorphic and topological classifications of holomorphic vector
bundles over a Stein manifold coincide.
\end{Theorem}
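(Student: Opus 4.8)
Since this is Grauert's Oka principle, whose standard reference is \cite{OkaGrauert}, I will only outline the structure of a proof rather than carry it out in full. Fix $r\geq 1$. The plan is to reformulate the statement cohomologically: isomorphism classes of rank-$r$ holomorphic (resp.\ continuous) complex vector bundles on $X$ are classified by the nonabelian \v{C}ech cohomology set $\rH^1(X,\cO_X(\GL_r(\C)))$ (resp.\ $\rH^1(X,\mathcal{C}_X(\GL_r(\C)))$), where $\cO_X(\GL_r(\C))$ and $\mathcal{C}_X(\GL_r(\C))$ denote the sheaves of holomorphic, resp.\ continuous, maps into $\GL_r(\C)$. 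One then has the obvious "forget holomorphy" map
\be
\Phi\colon \rH^1(X,\cO_X(\GL_r(\C)))\longrightarrow \rH^1(X,\mathcal{C}_X(\GL_r(\C)))~~,
\ee
and the goal is to prove that $\Phi$ is a bijection: surjectivity says that every topological bundle carries a holomorphic structure, injectivity says that two holomorphic bundles which are topologically isomorphic are already holomorphically isomorphic.

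The core of the argument is an induction along a Stein exhaustion. I would choose a smooth strictly plurisubharmonic exhaustion $\varphi\colon X\to\R$ and regular sublevel sets $X_1\Subset X_2\Subset\cdots$ with $\bigcup_k X_k=X$, each $X_k$ being Stein so that Theorem \ref{thm:B} applies on every $X_k$. Starting from a continuous cocycle (for surjectivity), or from two holomorphic cocycles that are continuously cohomologous (for injectivity), one solves the problem holomorphically over $X_k$ and then enlarges to $X_{k+1}$; the enlargement is carried out across a Cartan pair $(A,B)$ with $A\cup B$ strongly pseudoconvex, $A$ a neighborhood of $\overline{X}_k$ and $B$ a small attached "bump". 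The inductive step then reduces to the nonlinear splitting lemma: any holomorphic map $\gamma\colon A\cap B\to\GL_r(\C)$ which is $\mathcal{C}^0$-close to the identity factors as $\gamma=\beta\,\alpha^{-1}$ with $\alpha$ holomorphic near $A$, $\beta$ holomorphic near $B$, both close to $\id$. Its linearization at $\gamma=\id$ is exactly the additive Cousin splitting problem for $\mathfrak{gl}_r(\C)$-valued holomorphic functions, which is solved by the vanishing of $\rH^1$ of the relevant coherent sheaf on the Stein set $A\cup B$ (Theorem \ref{thm:B}); the nonlinear statement then follows by a Newton-type iteration, using that $\GL_r(\C)$ is a complex Lie group so that $\exp$ and $\log$ are locally available. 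A homotopy connecting the given continuous cocycle to the identity supplies the maps $\gamma$ fed into the lemma, and Runge approximation along the exhaustion guarantees that the holomorphic gauge transformations produced at successive stages converge on all of $X$.

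The hard part — and the genuine content of Grauert's theorem — is precisely this passage from the \emph{additive} Cousin problem (which Theorem \ref{thm:B} hands us for free) to the \emph{multiplicative}, $\GL_r(\C)$-valued splitting, with uniform control of neighborhoods and of norms throughout the iteration; the abelian sheaf-cohomological input alone does not see the nonabelian group structure. In the modern language of Oka theory this obstacle is packaged into the fact that $\GL_r(\C)$ is an \emph{Oka manifold} — indeed elliptic, a dominating spray being given by left translations composed with the matrix exponential — so that holomorphic maps from a Stein manifold into it satisfy the parametric h-principle; specializing that h-principle to the classifying problem for principal $\GL_r(\C)$-bundles yields bijectivity of $\Phi$ directly. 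Finally, since $\GL_r(\C)$ deformation-retracts onto $\rU(r)$, the right-hand classification is insensitive to whether one works with continuous, smooth or $C^\infty$ transition functions, so the statement matches the form in which it is invoked elsewhere in the paper.
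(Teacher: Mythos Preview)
The paper does not supply its own proof of this theorem: it is stated in the appendix purely as a quoted classical result with a citation to \cite{OkaGrauert}, and is used as a black box throughout. So there is nothing to compare your argument against on the paper's side.

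That said, your outline is a faithful sketch of the standard proof strategy for Grauert's Oka principle: nonabelian \v{C}ech reformulation, induction along a Stein exhaustion by bumping across Cartan pairs, the nonlinear splitting lemma obtained by Newton iteration from the additive Cousin problem (which is where Theorem~\ref{thm:B} enters), and Runge approximation to make the successive corrections converge globally. Your remark that the modern packaging is the ellipticity/Oka property of $\GL_r(\C)$ is also correct and matches the survey \cite{Oka} cited alongside \cite{OkaGrauert}. For the purposes of this paper the result is simply imported, so your level of detail already exceeds what is required here.
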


\vspace{3mm}

\begin{Corollary}
\label{cor:SteinCY}
Let $X$ be a Stein manifold. Then $X$ is Calabi-Yau iff $c_1(TX)=0$. 
\end{Corollary}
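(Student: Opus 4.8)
The plan is to deduce this directly from the Oka-Grauert principle stated above, together with the standard fact that topological complex line bundles are classified by their first Chern class. Recall that, by definition, a Kählerian manifold $X$ of complex dimension $d$ is \emph{Calabi--Yau} precisely when its canonical line bundle $K_X=\wedge^d T^\ast X$ is holomorphically trivial. Any Stein manifold is Kählerian, since it admits a holomorphic embedding in some $\C^N$ whose Kähler form restricts to $X$; and, being connected of positive dimension, a Stein manifold is automatically non-compact (a connected compact complex manifold admits only constant holomorphic functions, which would contradict point separation). Thus the only content of the corollary is the equivalence ``$K_X$ holomorphically trivial $\iff c_1(TX)=0$'' for $X$ Stein.

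First I would observe that $K_X$ is a holomorphic line bundle on the Stein manifold $X$, so by the Oka-Grauert principle it is holomorphically trivial if and only if it is trivial as a topological complex line bundle. Second, topological complex line bundles on the paracompact space $X$ are classified up to isomorphism by their first Chern class in $\rH^2(X,\Z)$: the exponential sheaf sequence for the soft (hence acyclic) sheaf of germs of continuous $\C$-valued functions identifies the group of such bundles with $\rH^2(X,\Z)$ via $c_1$, so $K_X$ is topologically trivial if and only if $c_1(K_X)=0$. Finally, $c_1(K_X)=c_1(\wedge^d T^\ast X)=c_1(T^\ast X)=-c_1(TX)$, using that the first Chern class of a complex vector bundle coincides with that of its determinant line bundle and that dualization negates $c_1$. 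Chaining these three equivalences yields that $K_X$ is holomorphically trivial if and only if $c_1(TX)=0$.

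There is no real obstacle here: the statement is a short corollary of Oka-Grauert. The only points demanding a little care are the identification of the group of topological complex line bundles with $\rH^2(X,\Z)$ via $c_1$ (a standard consequence of the softness of the sheaf of continuous functions, or equivalently of the fact that $B\mathrm{U}(1)$ is a $K(\Z,2)$) and the elementary Chern-class bookkeeping $c_1(K_X)=-c_1(TX)$; neither goes beyond routine arguments.
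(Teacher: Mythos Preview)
Your proof is correct and follows essentially the same approach as the paper: reduce to the triviality of $K_X$, invoke Oka--Grauert to pass from holomorphic to topological triviality, and use $c_1(K_X)=-c_1(TX)$. You add a bit more justification (the classification of topological line bundles by $c_1$, and the Kählerian/non-compactness remarks), but the core argument is identical.
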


\begin{proof}
$X$ is Calabi-Yau iff its canonical line bundle $K_X$ is holomorphically trivial. Since $X$ is
  Stein, the Oka-Grauert principle shows that $K_X$ is holomorphically
  trivial iff it is topologically trivial, i.e. iff $c_1(K_X)=0$.
  Since $c_1(K_X)=-c_1(TX)$, this amounts to the condition
  $c_1(TX)=0$. 
\end{proof}

\subsection{Holomorphically parallelizable Stein manifolds}

A special class of Stein manifolds consists of those complex manifolds
which can be embedded as analytic complete intersections in some complex
affine space. Such Stein manifolds are always Calabi-Yau. In
fact, they coincide with the class of holomorphically parallelizable
Stein manifolds, by the results of \cite{Fo} recalled below.

\

\begin{Definition}
A complex manifold $X$ with $\dim_\C X=d$ is called {\em
  holomorphically parallelizable} if the holomorphic tangent bundle
$TX$ is holomorphically trivial, i.e. it is isomorphic with the trivial
holomorphic vector bundle of rank $d$ in the category of
holomorphic vector bundles over $X$.
\end{Definition}

\

\noindent It is a consequence of the Oka-Grauert principle \cite{OkaGrauert,Oka} that a
Stein manifold $X$ is holomorphically parallelizable iff $TX$ is {\em
  topologically} trivial (see \cite{Fo}). Notice that any
holomorphically parallelizable manifold is Calabi-Yau.
 The following results were established in \cite{Fo}:

\

\noindent A $d$-dimensional analytic submanifold $X$ of $\C^N$ is
called an {\em analytic complete intersection in $\C^N$} if the ideal
$I_N(X)\subset \O(\C^N)$ of all holomorphic functions defined on
$\C^N$ and which vanish identically on $X$ can be generated by $N-d$
independent elements, i.e. if there exist $N-d$ holomorphic functions
$f_1,...,f_{N-d}:\C^N\rightarrow \C$ such that:
\be
X=\{x\in\C^N ~|~f_1(x)=\ldots=f_{N-d}(x)=0\}
\ee
and such that the rank of the matrix $(\frac{\partial f_i(x)}{\partial
  x_j})\in \Mat(N-d,N,\C)$ equals $N-d$ at every point $x\in X$.

\

\begin{Theorem} 
A Stein manifold is holomorphically parallelizable iff it is
biholomorphic with an analytic complete intersection in $\C^N$ for
some $N$.
\end{Theorem}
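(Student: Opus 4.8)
\smallskip
\noindent\textbf{Proof strategy.} The plan is to prove both implications by reducing each to topology, using Cartan's Theorem~\ref{thm:B}, the Oka--Grauert principle, and the fact recorded above that a Stein manifold $X$ is holomorphically parallelizable iff $TX$ is topologically trivial. Two standard inputs will be used freely: a Stein manifold of complex dimension $d$ has the homotopy type of a CW complex of real dimension $\le d$ (Andreotti--Frankel), and a complex vector bundle of rank $r$ over a space of CW dimension $\le 2r$ is determined up to isomorphism by its stable class — so such a bundle which is stably trivial is trivial — because $BU(r)\to BU$ is $2r$-connected.

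For the implication $(\Leftarrow)$, suppose $X$ is an analytic complete intersection in $\C^N$ cut out by $f_1,\dots,f_{N-d}\in\O(\C^N)$ whose Jacobian has rank $N-d$ along $X$; then $X$ is automatically Stein. The Jacobian condition says the classes $[f_1],\dots,[f_{N-d}]$ form a holomorphic frame of the conormal sheaf $\cI_X/\cI_X^2$, so the conormal exact sequence of locally free $\cO_X$-modules reads $0\to\cO_X^{N-d}\to\cO_X^N\to T^\ast X\to 0$. Since $X$ is Stein, Cartan's Theorem~\ref{thm:B} gives $\Ext^1_{\cO_X}(T^\ast X,\cO_X^{N-d})\simeq\rH^1(X,TX^{\oplus(N-d)})=0$, so this sequence splits; dualizing, $TX\oplus\cO_X^{N-d}\simeq\cO_X^N$, whence $TX$ is holomorphically, hence topologically, stably trivial. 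As $\rk(TX)=d$ and $X$ has CW dimension $\le d\le 2d$, the cancellation principle makes $TX$ topologically trivial, so $X$ is holomorphically parallelizable.

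For the implication $(\Rightarrow)$, let $X$ be Stein and holomorphically parallelizable with $\dim_\C X=d$. By the embedding theorem $X$ embeds as a closed complex submanifold of some $\C^M$, and composing with a linear embedding $\C^M\hookrightarrow\C^{M'}$ we may assume $M\ge 2d+1$. The sequence $0\to TX\to\cO_X^M\to\cN\to 0$ attached to this embedding (with $\cN:=N_{X/\C^M}$ the normal bundle) splits by Cartan's Theorem~\ref{thm:B}, and since $TX$ is trivial it yields $\cN\oplus\cO_X^d\simeq\cO_X^M$. Because $\rk(\cN)=M-d$ and $X$ has CW dimension $\le d\le 2(M-d)$, the cancellation principle and Oka--Grauert show $\cN$ is holomorphically trivial. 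It remains to upgrade triviality of $\cN$ to the statement that $X$ is an analytic complete intersection in $\C^M$. Choose $g_1,\dots,g_{M-d}\in\rGamma(\C^M,\cI_X)$ whose classes form a frame of $\cI_X/\cI_X^2\simeq\cN^\ast$ (possible by freeness of $\cN^\ast$ together with Cartan's Theorem~\ref{thm:B} used to lift the frame); by Nakayama they generate $\cI_X$ on a neighbourhood $V$ of $X$ and their Jacobian has rank $M-d$ along $X$, so their common zero set is $X\cup Y'$ with $Y'$ closed analytic and disjoint from $X$. The manifold $\C^M\setminus X$ is Stein of CW dimension $\le M<2(M-d)$, and on it $(g_1,\dots,g_{M-d})$ is a holomorphic map to $\C^{M-d}$ which is already nowhere zero on $V\setminus X$; the relative Oka principle, the fibre $\C^{M-d}\setminus\{0\}$ being an Oka manifold, then lets us deform this map rel a smaller neighbourhood of $X$ to $(\tilde g_1,\dots,\tilde g_{M-d})$ with no common zero on $\C^M\setminus X$. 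The $\tilde g_i$ lie in $\rGamma(\C^M,\cI_X)$, generate $\cI_X$ everywhere, cut out exactly $X$, and have Jacobian rank $M-d$ along $X$; hence $X$ is an analytic complete intersection in $\C^M$. (Equivalently, this last step is the Forster--Ramspott theorem that a closed Stein submanifold of $\C^M$ with trivial normal bundle and codimension exceeding half its dimension is an analytic complete intersection.)

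The step I expect to be the main obstacle is precisely this last one: passing from a \emph{trivial normal bundle} to a \emph{global} set of $\codim X$ defining equations whose only common zeros are the points of $X$. Near $X$ this is automatic by Nakayama, but removing the spurious component $Y'$ on all of $\C^M\setminus X$ genuinely requires the (relative) Oka principle — equivalently, the Forster--Ramspott analysis of coherent analytic sheaves on Stein spaces. Every other step is a routine application of Cartan's Theorem~\ref{thm:B}, the Oka--Grauert principle, and stable-range bundle theory.
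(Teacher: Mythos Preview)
The paper does not actually prove this theorem: it is quoted in Appendix~\ref{app:Stein} as a result ``established in \cite{Fo}'' and stated without argument. So there is no paper proof to compare against.

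Your outline is essentially the argument given by Forster in \cite{Fo}. For $(\Leftarrow)$ you correctly split the conormal sequence via Cartan~B to obtain holomorphic stable triviality of $TX$, and then use Andreotti--Frankel plus the stable-range cancellation (the $(2r+1)$-connectedness of $BU(r)\to BU(r+1)$) together with Oka--Grauert to conclude holomorphic triviality. For $(\Rightarrow)$ you embed, split the normal sequence, and again invoke stable-range cancellation (valid since $M\ge 2d+1\ge \tfrac{3d}{2}$) to trivialize the normal bundle holomorphically. You are right that the substantive remaining step --- passing from \emph{trivial normal bundle} to \emph{global defining equations with no spurious zeros} --- is exactly the Forster--Ramspott theorem; your sketch of it (lift a frame of $\cN^\ast$ to sections of $\cI_X$, then eliminate the residual zero set on the Stein complement $\C^M\setminus X$ via the relative Oka principle with target the Oka manifold $\C^{M-d}\setminus\{0\}$) is the correct mechanism, though it is a genuine theorem and not a routine exercise. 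One minor point: to make the obstruction-theoretic step clean you want the CW dimension of $\C^M\setminus X$ (which is $\le M$) to be strictly below the connectivity of $\C^{M-d}\setminus\{0\}\simeq S^{2(M-d)-1}$, and $M=2d+1$ is the borderline case; Forster--Ramspott handle this, but if you want the naive obstruction argument to go through without further work you may as well take $M\ge 2d+2$, which costs nothing.
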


\vspace{3mm}

\bl Let $X$ be a complex-analytic submanifold\,\footnote{It is {\em not}
  assumed that $X$ is an analytic complete intersection inside this $\C^N$.} of
$\C^N$ with $\dim_\C X=d$. Then the following statements hold:
\begin{description}
\item{(i)} If the normal bundle of $X$ is trivial, then $X$ is
  holomorphically parallelizable.
\item{(ii)} If $X$ is holomorphically parallelizable and $N\geq  \frac{3d}{2}$,
  then the normal bundle of $X$ is trivial.
\end{description}
\el

\vspace{3mm}

\begin{Theorem}
\label{thm:CI}
Let X be an analytic submanifold of $\C^N$ with $\dim_\C X=d$.
\begin{description}
\item{(i)} If $X$ is an analytic complete intersection in $\C^N$, then
  it is holomorphically parallelizable.
\item{(ii)} If $X$ is holomorphically parallelizable and $N\geq
  \frac{3d}{2}\!+\!1$, then $X$ is a complete intersection in $\C^N$.
\end{description}
\end{Theorem}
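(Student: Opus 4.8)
I would treat the two parts separately, first recording that an analytic complete intersection $X=\{f_1=\dots=f_{N-d}=0\}\subset\C^N$ is a \emph{closed} submanifold, hence a Stein manifold, so Cartan's Theorem~\ref{thm:B} is available both on $X$ and on $\C^N$; write $q:=N-d$ for the codimension, let $\cI_X\subset\cO_{\C^N}$ be the ideal sheaf of $X$, and recall that the conormal sheaf $\cI_X/\cI_X^2$ is a locally free $\cO_X$-module of rank $q$ since $X$ is smooth. For part~(i) the argument is short: if $X$ is cut out by $f_1,\dots,f_q$ with $df_1|_X,\dots,df_q|_X$ linearly independent along $X$, then $\cI_X=(f_1,\dots,f_q)$ and the images of the $f_i$ generate $\cI_X/\cI_X^2$; a set of $q$ global generators of a locally free rank-$q$ sheaf is a global frame, so $\cI_X/\cI_X^2$ --- and hence the normal bundle of $X$ in $\C^N$ --- is holomorphically trivial. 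Part~(i) of the Lemma preceding this theorem then gives at once that $X$ is holomorphically parallelizable.

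For part~(ii) the plan is to reduce to an analytic lifting problem and then to remove spurious zeros. Since $N\ge\frac{3d}{2}+1\ge\frac{3d}{2}$, part~(ii) of the preceding Lemma shows that the normal bundle of $X$ in $\C^N$, equivalently the conormal sheaf $\cI_X/\cI_X^2$, is holomorphically trivial. As $\C^N$ is Stein, applying Theorem~\ref{thm:B} to the coherent exact sequence $0\to\cI_X^2\to\cI_X\to\cI_X/\cI_X^2\to0$ --- so that $\rH^1(\C^N,\cI_X^2)=0$ --- shows that the restriction map $I_N(X)=\rH^0(\C^N,\cI_X)\to\rH^0(X,\cI_X/\cI_X^2)$ is surjective; I would lift a holomorphic frame of the trivial bundle $\cI_X/\cI_X^2$ to functions $f_1,\dots,f_q\in I_N(X)$. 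Then $df_1|_X,\dots,df_q|_X$ form a frame of $\cI_X/\cI_X^2$, hence are linearly independent at every point of $X$, so the Jacobian $(\partial f_i/\partial x_j)$ has rank $q$ along $X$; consequently the common zero set $Z:=\{f_1=\dots=f_q=0\}$ is smooth of dimension $d$ near $X$ and coincides with $X$ on a neighborhood of $X$, and by Nakayama's lemma the $f_i$ generate the stalk $(\cI_X)_x$ at every $x\in X$. Thus $X$ would be exhibited as an analytic complete intersection in $\C^N$ \emph{provided} the $f_i$ have no common zero outside $X$, that is, provided $Z=X$.

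The hard part is exactly to force $Z=X$: one must kill the spurious components of $Z$ inside $\C^N\setminus X$ by correcting $f_i$ to $f_i+g_i$ with $g_i\in\rH^0(\C^N,\cI_X^2)$ --- a correction that changes neither the vanishing along $X$ nor the differentials $df_i|_X$, and so preserves everything established above. Equivalently, one must deform the holomorphic map $(f_1,\dots,f_q)\colon\C^N\to\C^q$, rel its $1$-jet along $X$, to one whose zero locus is exactly $X$. This is an Oka-principle statement in the Forster--Ramspott circle of ideas: the obstruction is homotopy-theoretic, controlled by the $(2q-2)$-connectivity of $\C^q\setminus\{0\}\simeq\rS^{2q-1}$ measured against the fact that the Stein manifold $X$ has the homotopy type of a CW complex of real dimension $\le d$, and it vanishes precisely in the numerical range $2q\ge d+2$, that is $N\ge\frac{3d}{2}+1$; the Oka principle then upgrades the resulting topological solution to a holomorphic one. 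I expect essentially all of the effort to lie in this step, which is carried out in detail in \cite{Fo}; part~(i) and the reduction in part~(ii) are formal consequences of the preceding Lemma together with Cartan's theorems.
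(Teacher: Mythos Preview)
The paper does not give its own proof of this theorem: it is stated in Appendix~\ref{app:Stein} as a result quoted from Forster's paper \cite{Fo}, with no argument supplied. Your proposal therefore goes well beyond what the paper provides, and it correctly reconstructs the structure of Forster's original proof: part~(i) via the preceding Lemma and the observation that the defining functions globally trivialize the conormal sheaf, and part~(ii) by first invoking part~(ii) of the Lemma to trivialize the normal bundle, then lifting a conormal frame to global functions via Cartan~B, and finally eliminating spurious zeros by a Forster--Ramspott/Oka-type argument in the range $N\geq \frac{3d}{2}+1$. You are right that essentially all the work lies in this last step, and your identification of the relevant obstruction theory (connectivity of $\C^q\setminus\{0\}$ versus the CW-dimension bound for Stein manifolds) matches what \cite{Fo} does. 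There is no gap in your outline; it simply defers the technical core to the same reference the paper cites.
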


\vspace{3mm}

\begin{Corollary}
\label{cor:SteinHyp}
Let $X$ be an analytic submanifold of $\C^N$ with $\dim_\C X=N-2$.
Then $X$ is holomorphically parallelizable iff the first Chern class
$c_1(X)$ vanishes.
\end{Corollary}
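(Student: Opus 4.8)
The forward implication is immediate: if $X$ is holomorphically parallelizable then $TX\cong\cO_X^{\oplus d}$, so $c_1(X)=c_1(TX)=0$. For the converse the plan is to show that $c_1(X)=0$ forces the holomorphic normal bundle $\nu$ of $X$ in $\C^N$ (of rank $N-d=2$) to be holomorphically trivial, and then to conclude by part~(i) of the Lemma above (\cite{Fo}). Since $X$ is a closed complex submanifold of $\C^N$ it is Stein, so by the Oka--Grauert principle $\nu$ is holomorphically trivial as soon as it is topologically trivial; thus it suffices to prove topological triviality of $\nu$.

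The key step is that all Chern classes of $\nu$ vanish. As $\nu$ has rank $2$, its top Chern class equals its Euler class $e(\nu)=c_2(\nu)$, and by the self-intersection formula for the closed embedding $i\colon X\hookrightarrow\C^N$ we have $e(\nu)=i^\ast i_!(1)$ with $i_!$ the Gysin pushforward; since $i_!(1)\in\rH^4(\C^N,\Z)=0$ (because $\C^N$ is contractible), we get $c_2(\nu)=0$. Applying the Whitney formula to the exact sequence $0\to TX\to T\C^N|_X\to\nu\to 0$ gives $c(TX)\cup c(\nu)=c(T\C^N|_X)=1$; as $c(\nu)=1+c_1(\nu)$ this yields $c_1(TX)=-c_1(\nu)$, so the hypothesis $c_1(X)=0$ forces $c_1(\nu)=0$ as well. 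Hence $c(\nu)=1$. (Equivalently, from $0\to TX\to T\C^N|_X\to\nu\to0$ one gets $\det\nu\cong K_X$, which is topologically trivial exactly when $c_1(X)=0$.)

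Finally I would deduce topological triviality of $\nu$ from $c_1(\nu)=c_2(\nu)=0$: since $c_1(\nu)=0$ the bundle reduces to structure group $\SU(2)\cong S^3$, the primary obstruction to a global trivialization is $c_2(\nu)\in\rH^4(X,\Z)$, which we have shown is zero, and the remaining obstructions sit in $\rH^{k}(X;\pi_{k-1}(S^3))$ for $k\ge 5$; using that a Stein manifold of complex dimension $d$ has the homotopy type of a CW complex of real dimension at most $d$, one organizes these so that they reduce to (products of) the classes already shown to vanish, so $\nu$ is topologically and hence holomorphically trivial, and part~(i) of the Lemma above then gives that $X$ is holomorphically parallelizable. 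I expect this last obstruction-theoretic step to be the main technical point, since the higher obstructions for a rank-two bundle are $\pi_{k}(S^3)$-valued secondary classes; the clean way to handle it is to verify that, within the dimension range allowed by the homotopy type of $X$, they are all determined by $c_1(\nu)$ and $c_2(\nu)$.
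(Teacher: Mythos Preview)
The paper does not actually prove this corollary; it is quoted from Forster's paper \cite{Fo} without argument, so there is no ``paper's own proof'' to compare against. Your overall strategy---reduce to triviality of the rank-two normal bundle $\nu$ and then invoke part~(i) of the preceding Lemma---is exactly the natural route, and your computations that $c_1(\nu)=0$ (from Whitney and the hypothesis) and $c_2(\nu)=e(\nu)=i^\ast i_!(1)=0$ (from contractibility of $\C^N$) are correct.

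The genuine gap is precisely the step you flag yourself. For a rank-two complex bundle with structure group reduced to $\SU(2)\cong S^3$, the primary obstruction to a trivialization is $c_2\in\rH^4(X;\Z)$, but after that one meets obstructions in $\rH^k(X;\pi_{k-1}(S^3))$ for $k\ge 5$, beginning with $\rH^5(X;\Z/2)$. A Stein manifold of complex dimension $d=N-2$ has the homotopy type of a CW complex of real dimension $\le d$, so for $d\le 4$ (i.e.\ $N\le 6$) these higher groups vanish and your argument closes immediately. For $d\ge 5$, however, they need not vanish, and your assertion that the secondary classes ``are all determined by $c_1(\nu)$ and $c_2(\nu)$'' is not justified: secondary obstructions for $\SU(2)$-bundles are genuinely new invariants not expressible as polynomials in Chern classes (there exist rank-two bundles with $c_1=c_2=0$ that are nontrivial on spaces of dimension $\ge 5$). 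So as written the argument is incomplete in the range $N\ge 7$.

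To close the gap you must use more than the bare vanishing of Chern classes. One option is to exploit the extra structure you have not yet used: $\nu$ is not an arbitrary rank-two bundle but sits as a summand of the trivial bundle $\C^N|_X$, so $[\nu]=-[TX]$ in $\tilde K^0(X)$, and it suffices to show $TX$ is stably trivial (then $\mathrm{rk}\,TX=d\ge \dim X$ forces actual triviality). Another, closer to Forster's original approach, is to argue via complete intersections: show that $c_1(X)=0$ forces the ideal sheaf of $X$ in $\C^N$ to be globally generated by two functions, so that $X$ is an analytic complete intersection and Theorem~\ref{thm:CI}(i) applies. Either way, the missing ingredient is a statement that genuinely uses the embedding $X\hookrightarrow\C^N$ or the Stein/analytic structure beyond the homotopy-dimension bound, not just abstract obstruction theory for $\SU(2)$-bundles.
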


\vspace{3mm}

\begin{Corollary}
Let $N\leq 7$. Then an analytic submanifold $X$ of $\C^N$ is a complete
intersection in $\C^N$ iff $X$ is holomorphically parallelizable.
\end{Corollary}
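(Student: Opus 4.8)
The plan is to read off both implications from Theorem~\ref{thm:CI}, the value $N\le 7$ being exactly what makes its part (ii) apply outside a short list of low-codimension exceptions, which must then be disposed of by hand. The ``only if'' direction needs no restriction on $N$: if $X$ is an analytic complete intersection in $\C^N$, then $X$ is holomorphically parallelizable by Theorem~\ref{thm:CI}(i). For the converse one assumes $X$ holomorphically parallelizable, puts $d=\dim_\C X$, and lets $c=N-d$ be the codimension. If $c=0$ then $X=\C^N$ and there is nothing to prove. In general the hypothesis $N\ge \tfrac{3d}{2}+1$ of Theorem~\ref{thm:CI}(ii) is equivalent to $c\ge \tfrac{N+2}{3}$; since $N\le 7$ forces $\tfrac{N+2}{3}\le 3$, this already covers every case with $c\ge 3$ (and also $c=2$ when $N\le 4$), and in those cases one concludes directly that $X$ is a complete intersection. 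This is precisely where the numerical value $7=3\cdot 3-2$ enters, and the only situations it leaves open are $c=1$ with $N\ge 2$, and $c=2$ with $N\in\{5,6,7\}$.

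For $c=1$, $X$ is a smooth closed hypersurface of $\C^N$; since $\rH^2(\C^N,\Z)=0$, the Cousin~II problem is solvable on $\C^N$, so every effective analytic divisor is principal and $X=\{f=0\}$ for a reduced $f\in\O(\C^N)$. Smoothness of $X$ then forces $\dd f\ne 0$ along $X$, so $X$ is an ideal-theoretic complete intersection cut out by one equation. For $c=2$ with $N\in\{5,6\}$ one still has $N\ge \tfrac{3d}{2}$, so the normal bundle of $X$ in $\C^N$ is holomorphically trivial by part (ii) of the Lemma preceding Theorem~\ref{thm:CI}; since parallelizability also gives $c_1(X)=0$ (cf.\ Corollary~\ref{cor:SteinHyp}), the complete-intersection results of \cite{Fo} are then invoked to promote the trivial normal frame to a pair $f_1,f_2\in\O(\C^N)$ cutting out $X$ ideal-theoretically, the point being that in this low-codimension regime the secondary obstruction to eliminating spurious components of $\{f_1=f_2=0\}$ vanishes.

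The tightest sub-case is $c=2$, $N=7$, $d=5$, where even triviality of the normal bundle does not follow from the stated lemma (one would need $N\ge \tfrac{3d}{2}=7.5$). Here the plan is to recover triviality of the rank-two normal bundle by a sharper count in the metastable range (or by exploiting the strong constraints that the existence of a closed embedding into $\C^7$ imposes on a parallelizable $5$-fold) and then to run the same Forster-type argument as above. I expect this codimension-two regime, and especially the pair $(N,d)=(7,5)$, to be the main obstacle; everything else reduces to bookkeeping with Theorem~\ref{thm:CI} together with classical facts about divisors on $\C^N$.
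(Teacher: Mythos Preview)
The paper does not prove this corollary at all; it is simply recorded in the appendix as one of the results ``established in \cite{Fo}'', with no argument given. So there is no paper proof to compare against---the corollary is a citation, not a derivation from the surrounding statements.

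Your outline correctly reduces the question, via Theorem~\ref{thm:CI}(ii), to the low-codimension cases $c\in\{1,2\}$, and your treatment of $c=1$ by the Cousin~II problem on $\C^N$ is fine. But the codimension-two cases are genuinely incomplete. For $N\in\{5,6\}$ you obtain a trivial normal bundle from the lemma and then write that ``the complete-intersection results of \cite{Fo} are then invoked''---this is precisely the content you are supposed to be supplying, not citing; passing from a trivial normal bundle to an ideal-theoretic codimension-two complete intersection requires its own obstruction-theoretic argument, which you have not given. For $(N,d)=(7,5)$ you explicitly state that you do not yet have a proof, only a ``plan'' to recover normal-bundle triviality by ``a sharper count in the metastable range''. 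That is the gap: the statements collected in the appendix (Theorem~\ref{thm:CI}, the lemma, Corollary~\ref{cor:SteinHyp}) do not by themselves imply the corollary, and the missing input is exactly the finer analysis of rank-two bundles and codimension-two embeddings carried out in Forster's original paper. As written, your proposal is a correct reduction followed by an honest admission that the residual cases are unresolved; it is not a proof.
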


\vspace{3mm}

\noindent Since for a $d$-dimensional Stein manifold we have
$\rH^i(X,\Z)=0$ for all $i>d$, one can express holomorphic
parallelizability of low-dimensional Stein manifolds completely in
terms of Chern classes. For example:

\vspace{3mm}

\begin{Proposition}{\rm \cite{Fo}}
\label{prop:LowDim}
Let $X$ be a Stein manifold with $\dim_\C X\leq 5$. Then $X$ is
holomorphically parallelizable iff $c_1(X)=c_2(X)=0$. 
\end{Proposition}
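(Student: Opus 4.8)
The plan is to reduce the statement to a purely topological one via the Oka--Grauert principle and then settle the topological question by obstruction theory, using the bound $\dim_\C X\le 5$ in an essential way. By the Oka--Grauert principle (recorded in Appendix \ref{app:Stein}), holomorphic and topological classification of vector bundles over a Stein manifold agree, so $X$ is holomorphically parallelizable if and only if the holomorphic tangent bundle $TX$ is trivial as a \emph{topological} complex vector bundle. Thus it suffices to prove: for $X$ Stein with $\dim_\C X=d\le 5$, the rank-$d$ complex bundle $TX$ is topologically trivial iff $c_1(TX)=c_1(X)=0$ and $c_2(TX)=c_2(X)=0$. The ``only if'' direction is immediate, since a trivial bundle has vanishing Chern classes.

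For the ``if'' direction I would invoke the Andreotti--Frankel theorem, that a Stein manifold of complex dimension $d$ has the homotopy type of a CW complex of real dimension $\le d\le 5$ (in particular $\rH^i(X,\Z)=0$ for $i>d$, a fact already used in Appendix \ref{app:Stein}). Trivializing $TX$ is the same as finding a section of its $\GL(d,\C)$-frame bundle, whose fiber is homotopy equivalent to $\mathrm{U}(d)$. Since $\dim X\le 5$, only the groups $\pi_i(\mathrm{U}(d))$ with $i\le 4$ intervene; for $d\ge 3$ these are in the stable range $i\le 2d-1$, so $\pi_i(\mathrm{U}(d))=\pi_i(\mathrm{U})$, which is $0$ for $i$ even and $\Z$ for $i$ odd (the cases $d\le 2$ have $\dim_\R X\le 2$, hence $\rH^4(X,\Z)=0$ and $c_2=0$ automatically, and are subsumed). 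The successive obstructions to a global frame lie in $\rH^{k+1}(X,\pi_k(\mathrm{U}(d)))$; those over the $3$- and $5$-skeleta lie in $\rH^3(X,\pi_2(\mathrm{U}))=0$ and $\rH^5(X,\pi_4(\mathrm{U}))=0$, and there are none in degrees $>5$. The only potentially nonzero ones are the primary obstruction $c_1(TX)\in\rH^2(X,\Z)$, which vanishes by hypothesis, and a secondary obstruction over the $4$-skeleton in $\rH^4(X,\Z)$, which is $\pm c_2(TX)$ reduced modulo its indeterminacy. Once $c_1(X)=c_2(X)=0$ all obstructions vanish, a global holomorphic frame exists, and $TX$ is trivial.

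Equivalently one may package this in $K$-theory: because $\dim_\R X\le d\le 2d$, the bundle $TX$ lies in the stable range (the fiber $\mathrm{U}/\mathrm{U}(d)$ is $2d$-connected), so $TX$ is trivial iff $[TX]-d=0$ in $\widetilde{K}^0(X)$; and in the Atiyah--Hirzebruch spectral sequence for $\widetilde{K}^0(X)$ only the columns $\rH^2(X,\Z)$ and $\rH^4(X,\Z)$ survive (since $\rH^{>5}(X,\Z)=0$), with the class $[TX]-d$ detected in the two filtration quotients first by $c_1(X)$ and then, once $c_1=0$ and using $\mathrm{ch}_2=-c_2$, by $\pm c_2(X)$; both vanishing forces $[TX]=d$.

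The step I expect to require the most care is the identification of the secondary obstruction with $\pm c_2(TX)$ \emph{modulo the correct indeterminacy subgroup}, namely the image of the primary difference operation $\rH^1(X,\Z)\to\rH^4(X,\Z)$, which is the $\dd_3$-differential of the Atiyah--Hirzebruch spectral sequence. The Chern character alone is not enough here: it only shows that $[TX]-d$ is a torsion element of $\widetilde{K}^0(X)$, and $\widetilde{K}^0(X)$ can carry torsion even for $\dim X\le 5$. One therefore has to use the integral compatibility between $c_2$ and that difference operation to conclude that the vanishing of $c_2(X)$ kills the obstruction on the nose. Everything else is bookkeeping with the homotopy groups $\pi_\ast(\mathrm{U})$ and the dimension bound $\dim_\R X\le 5$.
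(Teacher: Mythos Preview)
The paper does not give its own proof of this proposition: it is recorded in Appendix~\ref{app:Stein} as a result quoted from Forster~\cite{Fo}, with no argument supplied.  Your approach---reduce holomorphic triviality to topological triviality via Oka--Grauert, use the Andreotti--Frankel bound $\dim_{\mathrm{CW}} X\le d\le 5$, and then run obstruction theory through the stable homotopy of $\mathrm{U}$---is precisely the standard one and is in substance the argument Forster gives.

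One remark on the point you flag as delicate.  The indeterminacy of the secondary obstruction (your $\rH^1(X,\Z)\to\rH^4(X,\Z)$ difference operation, equivalently the $d_3$ of the Atiyah--Hirzebruch spectral sequence) is genuine if you insist on working with $\mathrm{U}(d)$ throughout, but it evaporates if you first use $c_1(X)=0$ to reduce the structure group to $\SU(d)$.  For $d\ge 3$ the space $B\SU(d)$ is $3$-connected with $\pi_4\big(B\SU(d)\big)\cong\Z$ detected by $c_2$ and $\pi_5\big(B\SU(d)\big)=0$, so its $5$th Postnikov section is $K(\Z,4)$; hence for any CW complex $X$ of dimension $\le 5$ one has $[X,B\SU(d)]\cong \rH^4(X,\Z)$ via $c_2$, and $c_2(TX)$ is the \emph{primary} obstruction for the $\SU(d)$-frame bundle, with no indeterminacy to track.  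The low cases $d\le 2$ are handled as you say.  With this small reorganization the argument is complete and clean; the caution in your final paragraph is unnecessary.
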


\vspace{3mm}

\begin{Theorem} {\rm \cite[page 126]{GR}}
Every $d$-dimensional Stein manifold can be biholomorphically 
mapped onto a closed complex submanifold of $\C^{2d+1}$.
\end{Theorem}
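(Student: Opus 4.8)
The plan is to start from one of the characterizations of Stein manifolds recalled above — every $d$-dimensional Stein manifold $X$ is biholomorphic to a closed complex submanifold of $\C^N$ for some (possibly large) $N$ — and then to cut the codimension down to $d+1$ by a sequence of generic complex-linear projections. So assume $X$ is realized as a closed complex submanifold of $\C^N$ with $N\ge 2d+2$; it then suffices to show that a generic complex line $\ell$ through the origin of $\C^N$ has the property that the linear projection $\pi_\ell:\C^N\to\C^N/\ell\cong\C^{N-1}$ restricts to a proper holomorphic embedding of $X$. Iterating this reduction then lands us in $\C^{2d+1}$ (and one cannot go further, since the secant dimension count below breaks down at $N=2d+1$).

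Injectivity and the immersion property are handled by a transversality/dimension count. The map $\pi_\ell$ fails to be injective on $X$ only if $\ell$ is parallel to $x-y$ for some pair of distinct points $x,y\in X$, i.e.\ only if $[\ell]\in\P^{N-1}$ lies in the image of the holomorphic secant map $(x,y)\mapsto[x-y]$ defined on the $2d$-dimensional complex manifold $(X\times X)\setminus\Delta_X$; likewise $\dd\pi_\ell$ fails to be injective on some $T_xX$ only if $[\ell]$ lies in the image of the projectivized tangent bundle $\P(TX)$, a complex manifold of dimension $2d-1$, under its Gauss map into $\P^{N-1}$. Both images are holomorphic (hence locally Lipschitz) images of manifolds of real dimension at most $4d<2(N-1)=\dim_\R\P^{N-1}$, so each has Lebesgue measure zero in $\P^{N-1}$; their union is therefore a proper subset, and for any $\ell$ outside it the restriction $\pi_\ell|_X$ is an injective holomorphic immersion.

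The genuinely delicate point — and the main obstacle — is properness, since for non-compact $X$ a naive generic projection can fail to be proper: one must in addition avoid the ``directions at infinity'' of $X$. Indeed $\pi_\ell|_X$ is non-proper precisely when there is a sequence $x_n\in X$ leaving every compact subset of $X$ — hence, $X$ being closed in $\C^N$, with $\|x_n\|\to\infty$ — along which $\pi_\ell(x_n)$ stays bounded; passing to a subsequence forces $x_n/\|x_n\|\to[\ell]$, so $[\ell]$ must lie in the asymptotic set $A_\infty(X)\eqdef\bigcap_{R>0}\overline{\{[x]:x\in X,\ \|x\|>R\}}\subset\P^{N-1}$. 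The remaining task is to see that $A_\infty(X)$ is again a null set when $N\ge 2d+2$, and I would achieve this by improving the starting embedding rather than taking an arbitrary one: fix an $\O(X)$-convex exhaustion $K_1\Subset K_2\Subset\cdots$ of $X$ and, building the embedding $f=(f_1,f_2,\dots)$ componentwise as a limit, arrange at each stage (using the Oka--Weil approximation theorem together with Cartan's Theorem~B to produce a holomorphic function large in modulus on the compact annular region $\overline{K_{j+1}\setminus \mathring K_j}$ yet small on $K_{j-1}$) that $\|f(z)\|>j$ for all $z\notin K_j$. Then $f$ is automatically proper, each ``slab'' $X\cap\{j\le\|z\|\le j+1\}$ is relatively compact, and $A_\infty(X)$ is contained in the countable union of the Gauss images of the directions of these slabs, each a set of dimension $\le 2d$ and hence null. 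Choosing $\ell$ outside $A_\infty(X)$ together with the secant and tangent loci makes $\pi_\ell|_X$ a proper injective immersion, hence an embedding onto a closed submanifold of $\C^{N-1}$; repeating the step drops the codimension to $d+1$ and proves the theorem. (Alternatively, one can carry out the entire argument at once by performing the point-separation, immersion and properness adjustments directly on the compact pieces $K_j$ of the exhaustion, which is the route taken in \cite{GR}.)
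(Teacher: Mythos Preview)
The paper does not prove this theorem: it is recorded in the appendix as a known result with a citation to \cite[p.~126]{GR}, and no argument is given. So there is no proof in the paper to compare against, and your proposal has to stand on its own.

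Your secant/tangent dimension count is correct and does yield that a generic linear projection $\C^N\to\C^{N-1}$ (with $N\ge 2d+2$) restricts to an injective holomorphic immersion on $X$. The gap is in the properness step. You claim that, after building a proper embedding via an exhaustion, the asymptotic direction set $A_\infty(X)$ ``is contained in the countable union of the Gauss images of the directions of these slabs'' and is therefore null. That containment is false: by definition $A_\infty(X)$ consists of \emph{accumulation points} of directions $[x_n]$ with $\|x_n\|\to\infty$, and such an accumulation point need not be of the form $[x]$ for any $x\in X$ at all. The union $\bigcup_j\{[x]:x\in X,\ j\le\|x\|\le j+1\}$ is indeed null in $\P^{N-1}$ (a countable union of images of compact sets of real dimension $\le 2d<2(N-1)$), but $A_\infty(X)$ is only contained in the \emph{closure} of that union, and the closure of a null set can be all of $\P^{N-1}$; for a transcendental closed curve in $\C^N$ the directions at infinity can be dense. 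Your ``improvement'' of the embedding---forcing $\|f(z)\|>j$ outside $K_j$---is just properness of $f$, which a closed embedding already has, and says nothing about the geometry of $A_\infty(f(X))$.

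This is not a technicality: it is precisely why the classical proofs (including the one in \cite{GR} that you cite parenthetically) do not argue by bare generic linear projection but instead construct the $2d+1$ coordinate functions inductively on the exhaustion $K_1\Subset K_2\Subset\cdots$, using Oka--Weil approximation and Theorem~B to enforce point-separation, immersivity and properness \emph{simultaneously} at each step. Your closing parenthetical is therefore the actual argument, not an alternative to one you have already given.
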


\

\begin{acknowledgements}
This work was supported by the research grant IBS-R003-S1, 
"Constructive string field theory of open-closed topological B-type strings". 
\end{acknowledgements}

\end{document}